\definecolor{darkblue}{rgb}{0.0,0.0,0.3}
\theoremstyle{plain}
\newtheorem{theorem}{Theorem}[section]
\newtheorem*{theorem*}{Theorem}
\newtheorem{lemma}[theorem]{Lemma}
\newtheorem{proposition}[theorem]{Proposition}
\newtheorem*{proposition*}{Proposition}
\newtheorem{corollary}[theorem]{Corollary}
\newtheorem*{corollary*}{Corollary}
\theoremstyle{definition}
\newtheorem{remark}[theorem]{Remark}
\numberwithin{equation}{section}
\renewcommand{\Im}{\operatorname{Im}}
\renewcommand{\Re}{\operatorname{Re}}
\DeclareMathOperator*{\Res}{Res}
\DeclareRobustCommand{\mhl}[1]{%
  \ifmmode\text{\hl{$#1$}}\else\hl{#1}\fi
}
\newcommand{\legendre}[2]{\genfrac(){}{}{#1}{#2}}
\newcommand{\slegendre}[2]{\genfrac(){}{1}{#1}{#2}}
\newcommand{\sumstar}{\sideset{}{^\star}\sum}
\title{Sums of Cusp Form Coefficients Along Quadratic Sequences}
\author[Chan Ieong Kuan, David Lowry-Duda, and Alexander Walker]{Chan Ieong Kuan, David Lowry-Duda,
and Alexander Walker, with an Appendix by Raphael S. Steiner}
\begin{document}

\begin{abstract}
Let $f(z) = \sum A(n) n^{(k-1)/2} e(nz)$ be a cusp form of weight $k \geq 3$ on $\Gamma_0(N)$ with character $\chi$. By studying a certain shifted convolution sum, we prove that $\sum_{n \leq X} A(n^2+h) = c_{f,h} X + O_{f,h,\epsilon}(X^{\frac{3}{4}+\epsilon})$ for $\epsilon>0$, which improves a result of Blomer~\cite{Blomer08} with error $X^{\frac{6}{7}+\epsilon}$.
\end{abstract}

\maketitle

\section{Introduction}

In~\cite{Hooley63}, Hooley considers the average behavior of the divisor function $d(n)$ within a quadratic sequence and proves that
\begin{align} \label{eq:divisor-quadratic}
  S(X):= \sum_{n \leq X} d(n^2+h) = c_h X \log X + c_h' X + O_{h,\epsilon}\big(X^{\frac{8}{9}}\log^3 X\big)
\end{align}
for constants $c_h, c_h'$ (when $-h$ is non-square) using the theory of exponential sums.
Hooley's error term was improved by Bykovskii~\cite{Bykovskii87}, who uses the spectral theory of automorphic forms to study the generalized sum $\sum_{n \leq X} \sigma_\nu(n^2 + h)$, in which $\sigma_\nu(n) = \sum_{d \mid n} d^\nu$.
In the case $\nu =0$, Bykovskii obtains $S(X) = c_h X \log X + c_h' X + O(X^{2/3+\epsilon})$ for any $\epsilon > 0$.

An analogous question for the normalized coefficients of a GL(2) cusp form was introduced by Blomer in~\cite{Blomer08}. Let $f(z) = \sum a(n)e(nz)$ be a cusp form on $S_k(\Gamma_0(N),\chi)$ with weight $k \geq 4$ and set $A(n) = a(n)/n^{(k-1)/2}$.  For any monic quadratic polynomial $q(x) \in \mathbb{Z}[x]$, Blomer proves
\begin{align} \label{eq:blomer-result}
\sum_{n \leq X} A(q(n)) = c_{f,q} X + O_{f,q,\epsilon}\big(X^{\frac{6}{7}+\epsilon}\big)
\end{align}
for some constant $c_{f,q}$ which equals $0$ in most but not all cases.

Both Hooley and Bykovskii rely on the convolution identity $\sigma_0 = 1 * 1$.
As this has no analogue for cusp forms, Blomer instead proceeds by writing $f(z)$ as a sum of Poincar\'e
series, whose $q(n)$-th Fourier coefficients involve sums of the form
\[\sum_{c \geq 1} \frac{1}{N c} S_\chi(m,q(n);Nc) J_{k-1} \Big(\frac{4\pi \sqrt{q(n)m}}{Nc}\Big),\]
in which $J_{k-1}$ is the $J$-Bessel function and $S_\chi$ is a twisted Kloosterman sum. Blomer evaluates a smooth version of the sum over $n \leq X$ using Poisson summation, which converts the sums $S_\chi(m,q(n);Nc)$ into half-integral weight Kloosterman sums. Blomer's result then follows from cancellation in the latter, as proved using a half-integral weight Kuznetsov formula.

Two alternative methods for treating the cusp form analogy are given by Templier and Tsimerman in~\cite{TemplierTsimerman12}. The first is inspired by earlier work by Sarnak on $d(n^2+h)$ in~\cite{sarnak1984additive}, who relates the shifted convolution sum
\[\sum_{n \geq 1} \frac{d(n^2+h)}{(n^2+h)^s}\]
to the Petersson inner product $\langle \Im(z)^{\frac{1}{4}} \theta(z) \overline{E}(z,\frac{1}{2}), P_h(z,s) \rangle$, where $\theta(z)$ is a theta function, $E(z,s)$ is a weight $0$ real analytic Eisenstein series, and $P_h(z,s)$ is a half-integral weight Poincar\'e series. (Sarnak notes that this connects $S(X)$ to the spectrum of the half-integral weight Laplacian, going no further.) In~\cite[\S{4}]{TemplierTsimerman12}, similar methods are applied to study the shifted convolution sum
\begin{align} \label{eq:D_h-definition}
D_h(s) := \sum_{n \geq 0} \frac{r_1(n) a(n+h)}{(n+h)^{s+\frac{k}{2}-\frac{3}{4}}},
\end{align}
in which $r_\ell(n)$ is the number of representations of $n$ as a sum of $\ell$ squares. Templier and
Tsimerman give a meromorphic continuation of $D_h(s)$ and prove that $D_h(s)$ grows polynomially in
$\lvert \Im s \rvert$ in vertical strips. In particular, for any smooth function $g(x)$ on $\mathbb{R}^+$
with Mellin transform $\widetilde{g}(s)$ satisfying $\widetilde{g}(s) \ll \Gamma(s)$ in vertical
strips,~\cite[\S{4.8}]{TemplierTsimerman12} gives a constant $c_{f,h}'$ depending only on $f(z)$ and $h$ for
which
\begin{equation}\label{eq:TT12_statement}
\sum_{n \geq 0} A(n^2+h) g \Big(\frac{n^2+h}{X^2} \Big)
  = c_{f,h}' \,\widetilde{g}(\tfrac{1}{2}) X
  + O_\epsilon\bigg(\frac{X^{\frac{1}{2}+\Theta +\epsilon}}{h^{\frac{1}{2}\Theta - \delta}}\bigg),
\end{equation}
in which $\Theta \leq \frac{7}{64}$ (due to~\cite{KimSarnak03}) denotes progress towards the Selberg
eigenvalue conjecture and $\delta \leq \frac{1}{6}$ (due to~\cite{PetrowYoung19}) denotes progress
towards the Ramanujan--Petersson conjecture for half-integral weight cusp forms.

Templier and Tsimerman give a second proof of their result for $A(n^2+h)$ using representation
theory. This alternative framework allows for equal treatment of holomorphic cusp forms and Maass
cusp forms. Here as before, Templier--Tsimerman restrict to smoothed sums,
so their results cannot be directly compared to the sharp cutoff~\eqref{eq:blomer-result}
from~\cite{Blomer08}.

In this paper, we refine the shifted convolution sum technique described in~\cite[\S{4}]{TemplierTsimerman12} to produce the following sharp cutoff result.

\begin{theorem} \label{thm:main-theorem-intro}
Let $f(z) = \sum a(n) e(nz)$ be a cusp form in $S_k(\Gamma_0(N),\chi)$ and define $A(n) = a(n)/n^{(k-1)/2}$. For $k \geq 3$, $h>0$, and any $\epsilon > 0$, we have
\[
\sum_{n^2 +h \leq X^2} A(n^2+h) = (b_{f,h} + c_{f,h}) X
  +
	O_{f,h,\epsilon}\big(X^{\frac{3}{4} + \epsilon}\big).
\]
The constants $b_{f,h}$ and $c_{f,h}$ are described in~\eqref{eq:discrete-spectrum-residue} and \eqref{eq:residual-general}, respectively.
\end{theorem}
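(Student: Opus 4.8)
The plan is to attach a Dirichlet series to the sum, continue it meromorphically, and move a contour. Concretely, set
\[
\mathcal{D}(s) \;=\; \sum_{m \geq 1} \frac{A(m^2+h)}{(m^2+h)^{s}},
\]
which up to the elementary substitution $s \mapsto s-\tfrac14$ is the series $D_h$ of~\eqref{eq:D_h-definition}; Deligne's bound $A(n) \ll_\epsilon n^{\epsilon}$ makes it absolutely convergent for $\Re s > \tfrac12$. A truncated Perron formula then gives, for $\sigma_0 = \tfrac12+\epsilon$ and a parameter $T$ to be chosen,
\[
\sum_{m^2+h \leq X^2} A(m^2+h)
 \;=\; \frac{1}{2\pi i}\int_{\sigma_0 - iT}^{\sigma_0 + iT} \mathcal{D}(s)\,\frac{X^{2s}}{s}\,ds
 \;+\; O_{f,h,\epsilon}\!\left(\frac{X^{1+\epsilon}}{T}\right),
\]
where the error term absorbs both the usual Perron tail and the contribution of the $O(X^{1+\epsilon}/T)$ integers $m^2+h$ within $X^2/T$ of $X^2$ (using that the squares have spacing $\asymp X$ near $X^2$, together with $A(n)\ll_\epsilon n^{\epsilon}$).

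The heart of the matter is the meromorphic continuation of $\mathcal{D}(s)$, which I would obtain by refining the argument of~\cite[\S4]{TemplierTsimerman12}. Put $V(z) = \Im(z)^{1/2} f(z)\overline{\theta(z)}$, where $\theta(z) = \sum_{n} r_1(n) e(nz)$; this $V$ is a weight $k-\tfrac12$ automorphic function that decays at every cusp because $f$ is cuspidal, hence lies in $L^2$. After inserting the relevant archimedean factors, $\mathcal{D}(s)$ is the Petersson inner product of $V$ against a half-integral weight Poincar\'e series $P_h(z,s)$: unfolding $P_h$ recovers the series in the region of absolute convergence, and decomposing $P_h(z,s)$ along the cuspidal, residual, and Eisenstein parts of $L^2$ continues $\mathcal{D}(s)$ to a half-plane extending past $\Re s = \tfrac14$. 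In this region the only pole relevant to the main term is a simple pole at $s = \tfrac12$; its residue splits into a piece coming from the continuous (Eisenstein) spectrum — these give $c_{f,h}$ of~\eqref{eq:residual-general} — and a piece coming from the discrete spectrum, which in half-integral weight is genuinely present because it contains the residual $\theta$-type forms of Laplace eigenvalue $\tfrac3{16}$; these give $b_{f,h}$ of~\eqref{eq:discrete-spectrum-residue}. Any exceptional half-integral weight cusp forms produce poles to the right of $\tfrac14$, but the classical bound $\lambda_1 \geq \tfrac3{16}$ (or the sharper input of~\cite{KimSarnak03}) confines these, and — by the same computation that produces the error term $X^{1/2+\Theta+\epsilon}$ in~\eqref{eq:TT12_statement} — their residues are $O(X^{1/2+\Theta+\epsilon}) = o(X^{3/4})$.

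With the continuation in hand I would shift the contour from $\Re s = \tfrac12+\epsilon$ to $\Re s = \tfrac14$, crossing the pole at $s=\tfrac12$ (which yields the main term $(b_{f,h}+c_{f,h})X$, the residue of $\mathcal{D}(s)X^{2s}/s$ there) and the finitely many exceptional-spectrum poles (which are absorbed into the error as above). The remaining integral is $\ll X^{1/2}\int_{-T}^{T}\lvert \mathcal{D}(\tfrac14+it)\rvert \,\frac{dt}{1+\lvert t\rvert}$, and here one needs a genuine polynomial bound of the shape $\mathcal{D}(\tfrac14 + it) \ll_{h,\epsilon} (1+\lvert t\rvert)^{1+\epsilon}$ in the vertical strip. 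Taking $T = X^{1/4}$ then balances this integral against the Perron error $X^{1+\epsilon}/T$ and yields the claimed bound $O_{f,h,\epsilon}(X^{3/4+\epsilon})$.

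I expect the vertical-growth estimate for $\mathcal{D}(s)$ to be the main obstacle. Through the spectral expansion it reduces to estimating, as $\lvert \Im s\rvert \to \infty$, the archimedean transforms (ratios of Gamma functions and Bessel-type integrals) against the spectral data: the Fourier coefficients $\rho_j(h)$ of the half-integral weight Maass cusp forms, the associated shifted-convolution/triple-product quantities $\langle V, u_j\rangle$, and their Eisenstein analogues. Carrying out the sum over the spectrum requires a spectral large sieve for half-integral weight forms together with convexity-type bounds for the $L$-functions that appear; the decisive input on the half-integral weight side — a mean-value estimate for the coefficients $\rho_j(h)$ carrying the correct dependence on $h$ and on the spectral parameter, sharpened by progress towards the Ramanujan--Petersson conjecture in the spirit of~\cite{PetrowYoung19} — is what I would isolate and prove separately (this is the role of the Appendix). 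The sharp cutoff, as opposed to a smooth weight where rapid decay of the test function's Mellin transform removes the truncation entirely, is a secondary difficulty, handled by the Perron bookkeeping above.
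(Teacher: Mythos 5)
Your overall framework---Perron against a Dirichlet series built from $A(m^2+h)$, meromorphic continuation via the inner product of $y^{1/2} f \overline\theta$ against a half-integral weight Poincar\'e series, spectral decomposition, and contour shift---is the paper's method, and your identification of the spectral second moment for the coefficients $\rho_j(h)$ as the decisive input (the content of Appendix~\ref{sec:appendix}) is right. The gap lies in the step you flagged as the likely obstacle: the vertical-growth estimate. You need $\mathcal D(\tfrac14+it)\ll(1+\lvert t\rvert)^{1+\epsilon}$ for $T=X^{1/4}$ to balance against the Perron tail and give $X^{3/4}$. But unwinding normalizations, $\mathcal D(\tfrac14+it)$ is, up to an additive constant, $\tfrac12\,D_h(\tfrac12+it)$, and the unconditional bound the paper establishes on that line (from the discrete spectrum via Corollary~\ref{cor:product-average}) is $D_h(\tfrac12+it)\ll (1+\lvert t\rvert)^{2+\epsilon}$, not $(1+\lvert t\rvert)^{1+\epsilon}$. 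The exponent~$1$ is what the paper gets only conditionally (Remark~\ref{rem:discrete-spectrum-conjecture}, under GLH and Ramanujan--Petersson). Running your argument with the correct unconditional exponent~$2$, the contour term becomes $\ll X^{1/2}T^{2+\epsilon}$, and balancing against $X^{1+\epsilon}/T$ gives $T=X^{1/6}$ and an error of only $X^{5/6+\epsilon}$---an improvement on Blomer's $X^{6/7+\epsilon}$ but short of the claimed $X^{3/4+\epsilon}$.

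To recover $X^{3/4}$, the paper does not stop the discrete and residual parts at $\Re s=\tfrac12$ (in $D_h$-coordinates); it shifts them further, to $\Re s=\tfrac14+\epsilon$, passing all the non-exceptional spectral poles $s=\tfrac12\pm it_j$ along the way. On that deeper line the residual integral is genuinely small ($\ll T^{9/4+\epsilon}X^\epsilon$, negligible at $T=X^{1/4}$), and the error is instead carried by the sum $\mathfrak R$ of spectral residues, which is $\ll X^{1/2}\sum_{\ell\le\log_2 T}\sum_{\lvert t_j\rvert\sim 2^{-\ell}T}\lvert\rho_j(h)\langle y^{k/2+1/4}f\overline\theta,\mu_j\rangle\rvert/\lvert t_j\rvert\ll X^{1/2}T^{1+\epsilon}$ by the same Cauchy--Schwarz combination of the spectral second moment and the triple-product average $\mathfrak D$ (Theorem~\ref{thm:D-estimate}). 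This residue sum, not a single vertical-line bound, is what saturates $X^{3/4}$. In short: you are missing the deeper contour shift past the cuspidal spectrum and the accompanying bound on the resulting residue sum, which is the mechanism that converts exponent~$2$ on the critical line into exponent~$1$ in the error. Two smaller points: the continuous spectrum on $\Re s=\tfrac12+\epsilon$ also contributes $O(X^{1/2+\epsilon}T^{1+\epsilon})$ and must be tracked separately, and your attribution of $b_{f,h}$ and $c_{f,h}$ is swapped relative to the paper---$c_{f,h}$ arises from the residual (theta-type) spectrum, while $b_{f,h}$ arises from genuine discrete cusp forms that are Maass lifts of holomorphic weight-$\tfrac12$ cusp forms; the continuous Eisenstein spectrum contributes nothing to the main term.
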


Theorem~\ref{thm:main-theorem-intro} improves the error term
$O(X^{\frac{6}{7}+\epsilon})$ from~\cite{Blomer08}.
As we also show that $D_h(s)$ grows polynomially in $\lvert \Im s \rvert$ in
vertical strips, it would be straightforward ro reprove the smoothed
bound~\eqref{eq:TT12_statement} from our analysis.
We also remark that cusp form analogy still lags behind Bykovskii's
$\frac{2}{3}+\epsilon$ exponent in the divisor function analogue.
Exponents of size $\frac{1}{2}+\epsilon$ are conjectured to hold in both problems.

\section{Outline of Paper}

As in~\cite[\S{4}]{TemplierTsimerman12}, we understand sums of the form $\sum_{n \leq X} A(n^2+h)$ by studying the Dirichlet series $D_h(s)$ defined in~\eqref{eq:D_h-definition}.
In \S{\ref{sec:triple-inner-product}}, we prove Proposition~\ref{prop:D_h-inner-product}, which relates $D_h(s)$ to an inner product involving $f(z)$, the Jacobi theta function, and an appropriate Poincar\'e series $P_h^\kappa(z,s)$. Spectral expansion of $P_h^\kappa(z,s)$ in~\S{\ref{sec:spectral-expansion}} then expresses $D_h(s)$ as a sum of terms corresponding to the discrete, residual, and continuous spectra of the hyperbolic Laplacian.

Our treatment of the discrete spectrum of half-integral weight Maass forms differs greatly from~\cite{TemplierTsimerman12} and represents the main novelty of this work.
We avoid the use of weak estimates for the individual Fourier coefficients $\rho_j(n)$ of Maass forms by exploiting averages over either $n$ or the spectrum of Maass forms.
Our $n$-average appears in~\S{\ref{sec:fourier-coefficient-averages}} and
refines ideas of~\cite[\S{19}]{DFI02} by incorporating uniform bounds for the
Whittaker function. Our spectral average, a refinement of~\cite[Lemma
5]{Blomer08}, appears in~\S{\ref{sec:fourier-coefficient-averages}} and is
proved in Appendix~\ref{sec:appendix}. This appendix is due to Raphael Steiner.

These Fourier coefficient estimates are applied in~\S{\ref{sec:D-estimate}} to prove Theorem~\ref{thm:D-estimate}, a bound for the
sum
\[\mathfrak{D} :=
  \sum_{\vert t_j \vert \sim T}
  \vert \langle y^{\frac{k}{2} + \frac{1}{4}} f \overline{\theta}, \mu_j \rangle \vert^2 e^{\pi \vert t_j \vert},
\]
which averages over an orthonormal basis of (half-integral weight) Maass forms $\mu_j$ with spectral types $\vert t_j \vert \in [T,2T]$.
As in~\cite{Blomer08}, we leverage the fact that $f$ is holomorphic to write it
as a linear combination of holomorphic Poincar\'e series.
Unlike~\cite{Blomer08}, however, these Poincar\'e series are used to form
shifted convolutions, instead of introducing Kloosterman-type sums.
(This was noted as a possible approach in footnote~19 of~\cite{Watkins2019}, but
this wasn't executed there.)

In~\S{\ref{sec:sharp-cutoff}}, we use bounds for $\mathfrak{D}$ to control the
growth of $D_h(s)$ with respect to $\lvert \Im s \rvert$ in vertical strips.
Our main result Theorem~\ref{thm:main-theorem-intro} then follows by Perron's
formula and standard arguments using complex analysis.

\section*{Acknowledgements}

The authors thank Stephen Lester, who introduced them to this problem, as well as Yiannis Petridis, whose suggestions led to a simplification of section~\S{\ref{sec:D-estimate}}.
We also thank Thomas Hulse, whose involvement over many years has improved this paper and moreover its authors.

The first author was supported in part by NSFC (No.\ 11901585).
The second author was supported by the Simons Collaboration in Arithmetic Geometry, Number Theory,
and Computation via the Simons Foundation grant 546235.
The third author was supported by the Additional Funding Programme for Mathematical Sciences, delivered by EPSRC (EP/V521917/1) and the Heilbronn Institute for Mathematical Research.
R.S. would like to extend his gratitude to his employer, the Institute for Mathematical Research
(FIM) at ETH Z\"urich.

\section{A Triple Inner Product}\label{sec:triple-inner-product}

For integral $k \geq 1$ and an even Dirichlet character $\chi$, let
$S_k(\Gamma_0(N), \chi)$ denote the set of cusp forms on $\Gamma_0(N)$
which transform under the character $\chi \cdot \chi_{-1}^k$, where
$\chi_{-1} = (\frac{-1}{\cdot})$.
We assume without loss of generality that $4 \mid N$.
Once and for all, we fix a positive integer $h$ and a weight $k \geq 3$ modular form $f(z) = \sum a(n) e(nz) \in
S_k(\Gamma_0(N),\chi)$.
Here and later, we use the common notation $e(x) := e^{2 \pi i x}$.
Let $\theta(z) = \sum_{n \in \mathbb{Z}} e(n^2 z) = \sum_{n \geq 0} r_1(n) e(nz)$ denote the
classical Jacobi theta function.
The theta function is a modular form of weight $\frac{1}{2}$ on $\Gamma_0(4)$ (see~\cite{Shimura73}),
transforming via
\[
\theta(\gamma z)
= \epsilon_d^{-1} \Big(\frac{c}{d}\Big) (cz+d)^{\frac{1}{2}} \theta(z),
 \quad \gamma = \Big( \begin{matrix} a & b \\ c & d \end {matrix} \Big) \in \Gamma_0(4),
\]
in which $\epsilon_d = 1$ for $d \equiv 1 \bmod 4$, $\epsilon_d =i$ for $d \equiv 3 \bmod 4$, and $(\frac{c}{d})$ denotes the Kronecker symbol.
Then $\upsilon_\theta(\gamma) := \epsilon_d^{-1}(\frac{c}{d})$ is a multiplier system in the sense of~\cite[\S{2}]{Stromberg08}.
Let $P_h^\kappa(z,s)$ denote the weight $\kappa := k-\frac{1}{2}$ twisted Poincar\'e series on $\Gamma_0(N)$, defined by
\begin{align} \label{eq:P_h-definition}
P_h^\kappa(z,s) := \sum_{\gamma \in \Gamma_\infty \backslash \Gamma_0(N)} \overline{\chi(\gamma)} J_\theta(\gamma,z)^{-2\kappa} \Im(\gamma z)^s e(h \gamma z),
\end{align}
in which $J_\theta(\gamma,z) = \upsilon_\theta(\gamma) (cz+d)^\frac{1}{2} / \vert c z + d
\vert^{\frac{1}{2}}$ is the normalized theta cocycle $\theta(\gamma z)/\theta(z)$.
Then $P_h^\kappa(z,w)$ and $\Im(z)^{\frac{k}{2}+\frac{1}{4}} f(z) \overline{\theta(z)}$ transform identically under the action of $\Gamma_0(N)$, so the Petersson inner product $\langle y^{\frac{k}{2}+\frac{1}{4}} f \overline{\theta}, P_h^\kappa(\cdot, \overline{s}) \rangle$ is well-defined over $\Gamma_0(N)$. A standard unfolding argument
gives
\begin{align*}
\langle y^{\frac{k}{2}+\frac{1}{4}} f \overline{\theta}, P_h^\kappa(\cdot, \overline{s}) \rangle
  &= \int_{\Gamma_0(N) \backslash \mathbb{H}} y^{\frac{k}{2}+\frac{1}{4}} f(z) \overline{\theta(z)}
      \overline{P_h^\kappa(z,\overline{s})}\, \frac{dxdy}{y^2} \\
  &= \int_0^\infty \int_0^1 y^{s+\frac{k}{2}-\frac{3}{4}} f(z) \overline{\theta(z)} \overline{e(hz)}
      \frac{dxdy}{y} \\
  &= \int_0^\infty \int_0^1 y^{s+\frac{k}{2}-\frac{3}{4}}
      \Big( \sum_{m_1 \geq 1} a(m_1) e^{2\pi i m_1 x - 2\pi  m_1 y} \Big) \\
  & \qquad \times \Big( \sum_{m_2 \geq 0} r_1(m_2) e^{-2\pi i m_2 x - 2\pi m_2 y} \Big)
      e^{-2\pi i h x -2\pi h y} \frac{dxdy}{y}.
\end{align*}%
The $x$-integral extracts those terms with $m_1-m_2 -h=0$, and the remaining $y$-integral evaluates in terms of the gamma function:
\begin{align*}
\langle y^{\frac{k}{2}+\frac{1}{4}} f \overline{\theta}, P_h^\kappa(\cdot, \overline{s}) \rangle
&= \sum_{m \geq 0} r_1(m) a(m+h) \int_0^\infty y^{s+\frac{k}{2}-\frac{3}{4}} e^{-4\pi (m+h) y} \frac{dy}{y} \\
&= \frac{\Gamma(s+\frac{k}{2}-\frac{3}{4})}{(4\pi)^{s+\frac{k}{2}-\frac{3}{4}}}
\sum_{m \geq 0} \frac{r_1(m) a(m+h)}{(m+h)^{s+\frac{k}{2}-\frac{3}{4}}}.
\end{align*}
Standard estimates show that this Dirichlet series converges absolutely for $\Re(s) > \frac{3}{4}$.
By rearranging, we obtain the following identity for the Dirichlet series $D_h(s)$ introduced in~\eqref{eq:D_h-definition}.

\begin{proposition} \label{prop:D_h-inner-product}
Fix $h>0$ and any $f(z) = \sum a(n) e(nz) \in S_k(\Gamma_0(N),\chi)$. We have
\[D_h(s) := \sum_{m \geq 0} \frac{r_1(m) a(m+h)}{(m+h)^{s+\frac{k}{2}-\frac{3}{4}}}
= \frac{(4\pi)^{s+\frac{k}{2}-\frac{3}{4}}
    \langle y^{\frac{k}{2}+\frac{1}{4}} f \overline{\theta}, P_h^\kappa(\cdot, \overline{s}) \rangle}
  {\Gamma(s+\frac{k}{2}-\frac{3}{4})}\]
in the region $\Re s > \frac{3}{4}$.
\end{proposition}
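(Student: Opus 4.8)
The plan is to prove the identity by unfolding the Petersson inner product $\langle y^{\frac{k}{2}+\frac14} f\overline{\theta}, P_h^\kappa(\cdot,\overline{s})\rangle$ against the Poincar\'e series defined in~\eqref{eq:P_h-definition}, exactly along the lines of the computation displayed above; here I record the structure of that argument and flag where care is needed. Before unfolding, I would first verify that the integrand $y^{\frac{k}{2}+\frac14} f(z)\overline{\theta(z)}\,\overline{P_h^\kappa(z,\overline{s})}$ is genuinely $\Gamma_0(N)$-invariant, so that the inner product over $\Gamma_0(N)\backslash\mathbb{H}$ is well defined. This is a matching of multiplier systems: $f$ transforms under $\chi\cdot\chi_{-1}^k$ with the classical weight-$k$ automorphy factor, $\theta$ transforms with weight $\tfrac12$ and multiplier $\upsilon_\theta$, and $y^{\frac k2+\frac14}$ contributes the compensating absolute-value factors; on the other side, each term of $P_h^\kappa$ carries $\overline{\chi(\gamma)}J_\theta(\gamma,z)^{-2\kappa}$ with $\kappa=k-\tfrac12$ built from the same normalized theta cocycle $J_\theta$, so the weight-$(k-\tfrac12)$ pieces cancel against the weight-$k$ and weight-$\tfrac12$ pieces, leaving the trivial multiplier and confirming well-definedness.

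Next I would unfold. Substituting the definition of $P_h^\kappa$ as a sum over $\Gamma_\infty\backslash\Gamma_0(N)$ into the integral over $\Gamma_0(N)\backslash\mathbb{H}$ and using the invariance just established, the $\gamma$-sum together with the quotient collapses to a single integral over $\Gamma_\infty\backslash\mathbb{H}$, which we realize as the strip $\{0\le x\le1,\ y>0\}$. The surviving piece of the Poincar\'e series is $\Im(z)^{\overline{s}}e(h\overline{z})$, which after conjugation and combining exponents contributes $y^{s}e^{-2\pi i hx-2\pi h y}$; this yields the second displayed line above. To justify interchanging sum and integral I would note that for $\Re s$ large the whole expression converges absolutely and that, since $f$ is a cusp form, $y^{\frac k2+\frac14}f(z)\overline{\theta(z)}$ decays rapidly toward every cusp, so Fubini applies with no boundary issues; the same standard estimates also show the resulting Dirichlet series converges absolutely for $\Re s>\tfrac34$.

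Then I would expand $f(z)=\sum_{m_1\ge1}a(m_1)e(m_1z)$ and $\theta(z)=\sum_{m_2\ge0}r_1(m_2)e(m_2z)$, form the product with $\overline{e(hz)}$, and integrate over $x\in[0,1]$. Orthogonality of additive characters extracts precisely the terms with $m_1-m_2-h=0$, leaving $\sum_{m\ge0}r_1(m)a(m+h)\int_0^\infty y^{s+\frac k2-\frac34}e^{-4\pi(m+h)y}\,\frac{dy}{y}$. The inner integral is a Gamma integral equal to $\Gamma(s+\tfrac k2-\tfrac34)\,(4\pi(m+h))^{-(s+\frac k2-\frac34)}$; pulling out the $m$-independent factor $\Gamma(s+\tfrac k2-\tfrac34)/(4\pi)^{s+\frac k2-\frac34}$ and recognizing the remaining sum as $D_h(s)$ from~\eqref{eq:D_h-definition}, dividing through gives the asserted identity in the region $\Re s>\tfrac34$.

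The only genuinely delicate point is the bookkeeping in the first step: getting the exponents $\tfrac k2+\tfrac14$, $s$, and $2\kappa$, together with the conjugations, mutually consistent so that the integrand is honestly $\Gamma_0(N)$-invariant and the unfolded term is exactly $y^{s+\frac k2-\frac34}f(z)\overline{\theta(z)}\,\overline{e(hz)}$. Everything after that is the routine Fourier-expansion and Gamma-integral computation already displayed, so I would present the transformation-law verification with care and treat the remainder as the short calculation it is.
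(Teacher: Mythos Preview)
Your proposal is correct and follows essentially the same approach as the paper: verify that the multiplier systems match so the inner product is well defined, unfold the Poincar\'e series to the strip $\{0\le x\le 1,\ y>0\}$, extract the diagonal $m_1-m_2-h=0$ via the $x$-integral, and evaluate the remaining $y$-integral as a Gamma function. The paper's argument is slightly terser on the multiplier-matching and Fubini justifications you flag, but the computation is identical.
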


\section{Spectral Expansion of the Poincar\'e Series} \label{sec:spectral-expansion}

The series $D_h(s)$ has a meromorphic continuation to all $s \in \mathbb{C}$ obtained through spectral
expansion of the Poincar\'e series $P_h^\kappa(z,s)$ in Proposition~\ref{prop:D_h-inner-product}.
(See~\cite[\S15.3]{Stromberg08} for a good general reference on spectral expansions for general weight and the
shapes of each component.)
As a weight $\kappa = k -\frac{1}{2}$ object, this spectral expansion takes the form
\begin{align}
\begin{split} \label{eq:P_h-spectral-expansion}
  P_h^\kappa(z,s)
  &=  \sum_j \langle  P_h^\kappa(\cdot , s), \mu_j \rangle \mu_j(z)
    + \sum_{\ell} \langle P_h^\kappa(\cdot, s), R_\ell\rangle R_\ell(z) \\
  &\qquad + \frac{1}{4\pi i} \sum_{\mathfrak{a}} \int_{(\frac{1}{2})}
      \!\big\langle P_h^\kappa(\cdot, s), E_\mathfrak{a}^\kappa(\cdot,w;\chi)\big\rangle
      E_\mathfrak{a}^\kappa(z,w;\chi)\,dw,
\end{split}
\end{align}
in which $\{\mu_j\}$ denotes an orthonormal basis of weight $\kappa$ Maass
cuspforms of level $N$ and multiplier system $\chi \chi_{-1}^k \upsilon_\theta^{-1}$ which are eigenfunctions of the Hecke operators coprime to $N$, $\{R_\ell\}$ is a finite orthonormal basis of the residual spectrum of weight $\kappa$ and
multiplier $\chi \chi_{-1}^k \upsilon_\theta^{-1}$, $\mathfrak{a}$ ranges over the cusps of $\Gamma_0(N)$ which are singular with respect to this multiplier, and $E_\mathfrak{a}^\kappa(z,w;\chi)$ is
the weight $\kappa$ Eisenstein series with character $\chi \chi_{-1}^k$.  We refer to
the expressions at right in~\eqref{eq:P_h-spectral-expansion} as the discrete,
residual, and continuous spectra, respectively.

Inserting this spectral expansion into $D_h(s)$ as presented in Proposition~\ref{prop:D_h-inner-product}
gives a spectral expansion of the form
\begin{equation*}
  D_h(s) = \Sigma_{\mathrm{disc}}(s) + \Sigma_{\mathrm{res}}(s) + \Sigma_{\mathrm{cont}}(s),
\end{equation*}
which we now describe more fully.

\subsection{The Discrete Spectrum} \label{subsec:discrete-spectrum}

The Maass cuspforms in the discrete spectrum have Fourier expansions of the form
\begin{align} \label{eq:Fourier-Maass}
\mu_j(z) &= \sum_{n \neq 0} \rho_j(n) W_{\frac{n \kappa}{2 \vert n \vert}, it_j}(4\pi \vert n \vert y) e(nx),
\end{align}
in which $W_{\eta,\nu}(z)$ is the $\mathrm{GL}(2)$ Whittaker function.
By unfolding the Poincar\'e series $P_h^\kappa(z,s)$ and applying the integral
formula~\cite[7.621(3)]{GradshteynRyzhik07} for the resulting $y$-integral, we evaluate $\langle
P_h^\kappa(\cdot, s), \mu_j \rangle$ and conclude that the discrete spectrum's contribution towards
$D_h(s)$ equals
\begin{align} \label{eq:discrete-spectrum}
\hspace{-5 mm} \Sigma_{\mathrm{disc}} :=
\frac{(4\pi)^{\frac{k}{2}+\frac{1}{4}}}{h^{s-1}}
\! \sum_j \! \frac{\Gamma(s-\frac{1}{2}+it_j)\Gamma(s-\frac{1}{2}-it_j)}
       {\Gamma(s-\frac{k}{2}+\frac{1}{4})\Gamma(s+\frac{k}{2}-\frac{3}{4})}
   \rho_{j}(h) \langle y^{\frac{k}{2}+\frac{1}{4}} f \overline{\theta}, \mu_j \rangle.
\end{align}

We will establish in~\S{\ref{subsec:discrete-spectrum-growth}} that this series for $\Sigma_{\mathrm{disc}}$ converges everywhere away from poles.
Assuming this, $\Sigma_{\mathrm{disc}}(s)$ defines a meromorphic function which is analytic in $\Re s >
\frac{1}{2}+\sup_j \lvert \Im t_j \rvert$.

The orthonormal basis of Maass forms $\{\mu_j\}$ includes a finite subset of
distinguished forms arising from lifts of holomorphic cuspforms of weight
$\ell$, with $0 < \ell \leq \kappa$ and $\ell \equiv \kappa \bmod 2$.
(See for example \S3.10 of~\cite{GoldfeldHundley11}.)
These Maass forms have spectral types $\pm it_j = \frac{\ell-1}{2}$ and their
contribution towards $\Sigma_{\mathrm{disc}}$ may be written

\begin{align} \label{eq:Sigma_hol-definition}
\Sigma_{\mathrm{hol}}(s) :=
\frac{(4\pi)^{\frac{k}{2}+\frac{1}{4}}}{h^{s-1}}
\! \sum_{\substack{0 < \ell \leq \kappa \\ \ell \equiv \kappa(2) }}
	\sum_{\{g_{\ell j}\}} \! \frac{\Gamma(s-\frac{\ell}{2})\Gamma(s+\frac{\ell}{2}-1)}
       {\Gamma(s-\frac{\kappa}{2})\Gamma(s+\frac{\kappa-1}{2})}
   \rho_{\ell j}(h) \langle y^{\frac{k}{2}+\frac{1}{4}} f \overline{\theta}, g_{\ell j} \rangle,
\end{align}
in which $\{g_{\ell j}\}_j$ denotes an orthonormal basis of Maass lifts from holomorphic forms of weight $\ell$ to Maass forms of weight $\kappa$, with Fourier coefficients $\rho_{\ell j}(n)$. We refer to $\Sigma_{\mathrm{hol}}(s)$ as the contribution of the \emph{(Maass lifted) holomorphic spectrum}.

Note that the gamma ratio $\Gamma(s-\frac{\ell}{2})/\Gamma(s-\frac{\kappa}{2})$
contributes no poles and that the gamma factor $\Gamma(s+\frac{\ell}{2}-1)$ is
analytic in $\Re s > 1 - \frac{\ell}{2}$. Thus $\Sigma_{\mathrm{hol}}(s)$ has a
potential simple pole at $s= \frac{3}{4}$ (from $\ell = \frac{1}{2}$ when $k$
is odd) and is otherwise analytic in $\Re s > \frac{1}{4}$.

By the Shimura correspondence for Maass forms (see~\cite{KatokSarnak93}), the other summands in
$\Sigma_{\mathrm{disc}}(s)$ are analytic in $\Re s > \frac{1}{2} + \frac{\Theta}{2}$,
where $\Theta$ denotes the progress towards the Selberg eigenvalue conjecture
as before. Thus $\Sigma_{\mathrm{disc}}(s)$ is analytic in $\Re s > \frac{1}{2}
+ \frac{\Theta}{2}$, except for a potential simple pole at $s= \frac{3}{4}$
when $k$ is odd. We now examine this pole further.

The pole at $s = \frac{3}{4}$ in $\Sigma_{\mathrm{disc}}(s)$, if it occurs, is
localized to the terms in $\Sigma_{\mathrm{hol}}$ coming from Maass lifts of
holomorphic cuspforms of weight $\frac{1}{2}$.
By~\cite[\S{2}]{SerreStark77}, the space of weight $\frac{1}{2}$ modular forms on $\Gamma_0(N)$ with character $\chi$ has a basis of theta functions of the form
\[
	\theta_{\psi,t}(z) = \sum_{n \in \mathbb{Z}} \psi(n) e(tn^2 z),
\]
where $\psi$ is an even primitive character of conductor $L$ with $4 L^2 t \mid N$ and $\chi(n) = \psi(n) (\frac{t}{n})$ for $(n,N)=1$. Moreover, the subspace of weight $\frac{1}{2}$ cuspforms on $\Gamma_0(N)$ with character $\chi$ is spanned by those $\theta_{\psi,t}$ for which $\psi$ is not \emph{totally even}, i.e.\ $\psi$ is not the square of another character.

In particular, this cuspidal space is empty whenever $N/4$ is square-free,
since this condition forces $L=1$, so $\psi$ is trivial and therefore totally even.
Thus $\Sigma_{\mathrm{hol}}(s)$ is analytic in $\Re s > \frac{1}{4}$ whenever
$N/4$ is square-free.

\begin{remark} \label{rem:holomorphic-main-term}
The potential pole of $\Sigma_{\mathrm{hol}}(s)$ at $s= \frac{3}{4}$ vanishes in many cases. If this pole persists, then
\begin{itemize}
	\item[a.] $k$ must be odd;
	\item[b.] $N/4$ must admit a square divisor. More precisely, \cite[Corollary~2]{SerreStark77} implies that $N$ must be divisible by $64p^2$ or $4 p^2 q^2$, where $p$ and $q$ are distinct odd primes;
	\item[c.] the square-free part of $h$ must divide $N$, since $\rho_{\ell j}(h) =0$ for the relevant Maass lifts otherwise.
\end{itemize}
In other cases, this pole can occur. We use Poincar\'e series to provide a class of examples in Remark~\ref{rem:Maass-lift-poincare-example}. For future reference, we define
\begin{align} \label{eq:discrete-spectrum-residue}
	b_{f,h} := \Res_{s = \frac{3}{4}} \Sigma_{\mathrm{hol}}(s).
\end{align}
This residue can be written as a sum indexed by an orthonormal basis of weight $\kappa$ Maass forms lifted from holomorphic forms of weight $\frac{1}{2}$.
\end{remark}

\subsubsection{Extended Remark}\label{extended_remark}

We note that the potential pole at $s=\frac{3}{4}$ in $\Sigma_{\mathrm{hol}}(s)$ was omitted from consideration
in~\cite{TemplierTsimerman12}, as that argument neglected the
polar contribution from the Maass lifted holomorphic spectrum.

We also note that we aren't the first to notice this pole. Watkins observes the same potential pole following equation (8) in~\cite[\S{5.5}]{Watkins2019} (and notes in his footnote~15 that this pole does not occur in the case $\kappa \equiv \frac{3}{2} \bmod 2$).
In \S5.5.2, Watkins enumerates several other small omissions in \S4
of~\cite{TemplierTsimerman12}.

Our treatment of the holomorphic spectrum $\Sigma_{\mathrm{hol}}$ resembles the method used in~\cite[\S{4.7}]{TemplierTsimerman12} to address the residual spectrum $\Sigma_{\mathrm{res}}$. We discuss Templier--Tsimerman's treatment of $\Sigma_{\mathrm{res}}$ in~\S{\ref{subsec:residual-spectrum}}. In particular, we conjecture that the potential pole of $\Sigma_{\mathrm{hol}}(s)$ at $s=\frac{3}{4}$ vanishes under the same conditions which force a potential pole in $\Sigma_{\mathrm{res}}(s)$ to vanish, as described in Remark~\ref{rem:general-main-term}. That is, in addition to the conditions listed in Remark~\ref{rem:holomorphic-main-term}, we conjecture that the potential pole of $\Sigma_{\mathrm{hol}}(s)$ at $s=\frac{3}{4}$ vanishes when $f(z)$ is not dihedral. We do not attempt to prove this conjecture.

Aside from contributing a potential pole at $s = \frac{3}{4}$, the holomorphic
spectrum $\Sigma_{\mathrm{hol}}(s)$ can be bounded in essentially the exact
same way and with the same bounds as the rest of $\Sigma_{\mathrm{disc}}(s)$.
In particular, we observe that the technical bounds we give in \S5 and \S6 for
the discrete spectrum are more constrained by Maass forms that are not lifts of
holomorphic forms.

\subsection{The Continuous Spectrum}

The Eisenstein series in the continuous spectrum have Fourier expansions of the form
\begin{align} \label{eq:Fourier-Eisenstein}
E_\mathfrak{a}^\kappa(z,w;\chi) &= \delta_{[\mathfrak{a}=\infty]} y^w + \rho_{\mathfrak{a}}(0,w) y^{1-w}
  \\
  &\qquad \qquad \quad\,  + \sum_{n \neq 0} \rho_{\mathfrak{a}}(n,w) W_{ \! \frac{n\kappa}{2 \vert n \vert}, w-\frac{1}{2}}(4\pi \vert n \vert y) e(nx),
\end{align}
in which $\delta_{[\cdot]}$ denotes the Kronecker delta. By unfolding $P_h^\kappa(z,s)$ as before, we determine that the continuous spectrum's contribution towards $D_h(s)$ may be written
\begin{align} \label{eq:continuous-spectrum}
 \Sigma_{\mathrm{cont}}(s) := \frac{(4\pi)^{\frac{k}{2}-\frac{3}{4}}}{h^{s-1}} \sum_{\mathfrak{a}}
	&\int_{-\infty}^\infty \frac{\Gamma(s-\frac{1}{2} + it)\Gamma(s-\frac{1}{2}-it)}{\Gamma(s-\frac{k}{2}+\frac{1}{4})\Gamma(s+\frac{k}{2}-\frac{3}{4})} \\ \nonumber
& \quad \times \rho_\mathfrak{a}(h,\tfrac{1}{2}+it) \langle y^{\frac{k}{2}+\frac{1}{4}} f \overline{\theta} , E_\mathfrak{a}^\kappa(\cdot ,\tfrac{1}{2}+it;\chi) \rangle \, dt.
\end{align}
We will prove in~\S{\ref{subsec:continuous-spectrum-growth}} that
$\Sigma_{\mathrm{cont}}$ converges everywhere away from poles and therefore
defines an analytic function of $s$ in $\Re s > \frac{1}{2}$.
Through delicate contour shifting (as in~\cite[\S{4}]{HoffsteinHulse13} or~\cite[\S{3.3.2}]{HKLDWSphere}), one may show that
$\Sigma_{\mathrm{cont}}$ extends meromorphically to all $\Re s \in \mathbb{C}$, though we only consider $\Sigma_{\mathrm{cont}}$ in $\Re s >
\frac{1}{2}$ in this work.

\subsection{The Residual Spectrum} \label{subsec:residual-spectrum}

We conclude this section by discussing the residual spectrum, which contributes a term of the form
\[
\Sigma_{\mathrm{res}}(s)
  := \frac{(4\pi)^{s+\frac{k}{2}-\frac{3}{4}}}{\Gamma(s+\frac{k}{2}-\frac{3}{4})}
    \sum_\ell \langle R_\ell, P_h^\kappa(\cdot, \overline{s}) \rangle
    \langle y^{\frac{k}{2}+\frac{1}{4}} f \overline{\theta}, R_\ell \rangle
\]
towards $D_h(s)$. The material here follows~\cite[\S{3.5} and \S{4.7}]{TemplierTsimerman12}, with minor modifications and elaborations.

By the theory of raising operators, the weight $\kappa = k-\frac{1}{2}$ residual spectrum lifts from weight $\frac{1}{2}$ forms if $k$ is odd and from weight $\frac{3}{2}$ forms is $k$ is even. By~\cite[\S{2}]{Duke88}, the residual spectrum does not appear in weight $\frac{3}{2}$, so we restrict to $k$ odd for the rest of this section.

Suppose initially that $N/4$ is square-free and odd. By~\cite{SerreStark77}, the weight $\frac{1}{2}$ residual spectrum
appears only when $\chi = (\frac{\cdot}{N/4})$. In this case, the space is one-dimensional and spanned by the theta function
\[
y^{\frac{1}{4}} \theta_N(z) = y^{\frac{1}{4}} \sum_{n \in \mathbb{Z}} e(\tfrac{N n^2 z}{4})
= y^{\frac{1}{4}}
  + \sum_{n \neq 0} (\pi N n^2)^{-\frac{1}{4}} W_{\frac{1}{4},\frac{1}{4}}(\pi N n^2 y) e(\tfrac{N n^2 x}{4}).
\]
Under these assumptions, we have
\[
  \Sigma_{\mathrm{res}}(s) :=
    \frac{(4\pi)^{s+\frac{k}{2}-\frac{3}{4}}}{\Gamma(s+\frac{k}{2}-\frac{3}{4})}
    \langle u, P_h^\kappa(\cdot, \overline{s}) \rangle
    \langle y^{\frac{k}{2}+\frac{1}{4}} f \overline{\theta}, u\rangle,
\]
in which $u(z)$ denotes the $L^2$-normalized lift of $y^{\frac{1}{4}}\theta_N(z)$ to weight $\kappa$ under the raising operators. As the weight $\eta$ raising operator $R_\eta = i y \frac{\partial}{\partial x} + y \frac{\partial}{\partial y} + \frac{\eta}{2}$ maps $y^{1/4}$ to $(\frac{\eta}{2}+\frac{1}{4})y^{1/4}$ and $W_{\frac{\eta}{2},\frac{1}{4}}(4\pi m y) e(mx)$ to $-W_{\frac{\eta+2}{2},\frac{1}{4}}(4\pi m y) e(mx)$, the (non-normalized) lift of $y^{\frac{1}{4}} \theta_N(z)$ to weight $\kappa$ equals
\[
U(z):= y^{\frac{1}{4}} \!\!\!
  \prod_{\substack{\frac{1}{2} \leq \eta < \kappa \\ \eta \equiv \frac{1}{2} \bmod 2}}
      \!\!\! (\tfrac{\eta}{2}+\tfrac{1}{4})
  + (-1)^{\frac{k-1}{2}} \sum_{n \neq 0} (\pi N n^2)^{-\frac{1}{4}} W_{\frac{\kappa}{2},\frac{1}{4}}(\pi N n^2 y) e(\tfrac{N n^2 x}{4}).
\]

To relate $U(z)$ and $u(z)$, we consider the effect of the raising operators on $L^2$ norms. Let $\mu$ denote any Maass cusp form of weight $\eta$ and type $\nu$ on $\Gamma_0(N)$ and let $L_\eta = -iy \frac{\partial}{\partial x} + y \frac{\partial}{\partial y} - \frac{\eta}{2}$ denote the weight $\eta$ Maass lowering operator. By combining~\cite[(3.9.4)]{GoldfeldHundley11} with the last offset equation on~\cite[p.~90]{GoldfeldHundley11}, we produce
\begin{align} \label{eq:raising-L^2}
\Vert R_\eta \mu \Vert^2
  & = \langle R_\eta \mu, R_\eta \mu \rangle
  = \langle \mu, - L_{\eta+2} R_\eta \mu \rangle  \\
  &= \langle \mu, \Delta_\eta \mu \rangle + \tfrac{\eta}{2}(1+\tfrac{\eta}{2}) \langle \mu, \mu \rangle
  = \big(\tfrac{1}{4}+ \nu^2 + \tfrac{\eta}{2}(1+\tfrac{\eta}{2}) \big) \Vert \mu \Vert^2.
\end{align}
We note that this is analogous to~\cite[equation (3.20)]{TemplierTsimerman12}.

In the special case $\nu = i/4$, we conclude that
\[\Vert U \Vert^2 = \Vert y^{\frac{1}{4}} \theta_N \Vert^2
  \!\! \prod_{\substack{ \frac{1}{2} \leq \eta < \kappa \\ \eta \equiv \frac{1}{2} \bmod 2}} \!\!
  (\tfrac{\eta}{2}+\tfrac{1}{4})(\tfrac{\eta}{2}+\tfrac{3}{4})
  = \Vert y^{\frac{1}{4}} \theta_N \Vert^2 \prod_{j =1}^{\frac{k-1}{2}} j(j-\tfrac{1}{2}),
\]
which implies that
\begin{align*}
  u(z) &= \frac{y^{\frac{1}{4}}}{\Vert y^{\frac{1}{4}} \theta_N \Vert} \!\!
  \prod_{\substack{\frac{1}{2} \leq \eta < \kappa \\ \eta \equiv \frac{1}{2} \bmod 2}} \!\!
  \frac{(\tfrac{\eta}{2} + \tfrac{1}{4})^{\frac{1}{2}}}{(\tfrac{\eta}{2} + \tfrac{3}{4})^{\frac{1}{2}}}
  +
  \frac{(-1)^{\frac{k-1}{2}} d_k^{-\frac{1}{2}}}{\Vert y^{\frac{1}{4}} \theta_N \Vert}
  \sum_{n \neq 0} \frac{%
    W_{\frac{\kappa}{2},\frac{1}{4}}(\pi N n^2 y)
  }{%
    (\pi N n^2)^{\frac{1}{4}}
  } e(\tfrac{N n^2 x}{4}),
\end{align*}
in which $d_k = \prod_{j=1}^{\frac{k-1}{2}} j(j-\frac{1}{2})$, cf.~\cite[\S{4.7}]{TemplierTsimerman12}.

We now compute the two inner products which appear in $\Sigma_{\mathrm{res}}$.
To begin, we unfold the Poincar\'e series and apply~\cite[7.621(3)]{GradshteynRyzhik07} to produce
\begin{align*}
 \langle u, P_h^\kappa(\cdot, \overline{s}) \rangle
&  = \frac{2 (-1)^{\frac{k-1}{2}} d_k^{-\frac{1}{2}}  \delta_{[N n^2 =4h]}}{(4\pi h)^{\frac{1}{4}} \Vert y^{\frac{1}{4}} \theta_N \Vert} \int_0^\infty y^{s-1} W_{\frac{\kappa}{2},\frac{1}{4}}(4\pi h y) e^{-2\pi h y} \frac{dy}{y} \\
& = \frac{(-1)^{\frac{k-1}{2}} d_k^{-\frac{1}{2}} r_1(\frac{4h}{N}) \Gamma(s-\frac{1}{4})\Gamma(s-\frac{3}{4})}{(4\pi h)^{s-\frac{3}{4}} \Vert y^{\frac{1}{4}} \theta_N \Vert \Gamma(s-\frac{\kappa}{2})}.
\end{align*}

To compute $\langle y^{\frac{k}{2}+\frac{1}{4}} f \overline{\theta}, u \rangle$, we recognize $u(z)$ as a multiple of the residue at $w=\frac{3}{4}$ of the Eisenstein series $E_\infty^\kappa(z,w)$ of level $N$, weight $\kappa$, and trivial character (i.e.~\eqref{eq:P_h-definition}, with $h=0$ and $\chi$ trivial).
We have $\Res_{w=3/4} E_\infty^\kappa(z,w) = u(z) d_k^{1/2} \Vert y^{\frac{1}{4}} \theta_N \Vert^{-1}$ from~\cite[\S{4.7}]{TemplierTsimerman12}, and hence
\begin{align*}
\langle y^{\frac{k}{2}+\frac{1}{4}} f \overline{\theta}, u \rangle
  &= d_k^{-\frac{1}{2}}  \Vert y^{\frac{1}{4}} \theta_N \Vert \cdot
  \Res_{w=\frac{3}{4}}
    \langle y^{\frac{k}{2}+\frac{1}{4}} f \overline{\theta}, E_\infty^\kappa(z,\overline{w}) \rangle \\
&= d_k^{-\frac{1}{2}}  \Vert y^{\frac{1}{4}} \theta_N \Vert
    \cdot \Res_{w=\frac{3}{4}} \sum_{n=1}^\infty 2a(n^2)
    \int_0^\infty y^{w+\frac{k}{2}-\frac{3}{4}} e^{-2\pi n^2 y} \frac{dy}{y} \\
&= 2d_k^{-\frac{1}{2}}  \Vert y^{\frac{1}{4}} \theta_N \Vert
    \cdot \Res_{w=\frac{3}{4}} \frac{\Gamma(w+\frac{k}{2}-\frac{3}{4})}{(2\pi)^{w+\frac{k}{2}-\frac{3}{4}}}
    \sum_{n=1}^\infty \frac{a(n^2)}{n^{2w+k-\frac{3}{2}}}.
\end{align*}
The Dirichlet series in the line above equals $L(2w-\frac{1}{2}, \mathrm{Sym}^2 f)/\zeta(4w-1)$, in which $L(s, \mathrm{Sym}^2 f)$ is the symmetric square $L$-function of $f(z)$. Putting everything together, we derive the explicit formula
\begin{align} \label{eq:residual-squarefree-level}
\Sigma_{\mathrm{res}}(s) &=
\frac{2^{\frac{k}{2}}\Gamma(\frac{k}{2}) (-1)^{\frac{k-1}{2}} r_1(\frac{4h}{N})\Gamma(s-\frac{1}{4})\Gamma(s-\frac{3}{4})}
    {\zeta(2) d_k h^{s-\frac{3}{4}} \Gamma(s+\frac{k}{2}-\frac{3}{4})\Gamma(s-\frac{\kappa}{2})}
\cdot \Res_{w=1} L(w,\mathrm{Sym}^2 f) \\ \nonumber
& =
\frac{2^{\frac{k}{2}} \sqrt{\pi} (-1)^{\frac{k-1}{2}} r_1(\frac{4h}{N})}{\zeta(2) \Gamma(\frac{k+1}{2}) h^{s-\frac{3}{4}}} \cdot
\frac{\Gamma(s-\frac{1}{4})\Gamma(s-\frac{3}{4})}{\Gamma(s+\frac{k}{2}-\frac{3}{4})\Gamma(s-\frac{\kappa}{2})}
\cdot \Res_{w=1} L(w,\mathrm{Sym}^2 f),
\end{align}
in which the second line follows from the identity $d_k = \Gamma(k)/2^{k-1}$ and the gamma
duplication formula. In conclusion, we note that $\Sigma_{\mathrm{res}}(s)$ is meromorphic in
$\mathbb{C}$ and analytic in $\Re s > \frac{3}{4}$, with a simple pole at $s=\frac{3}{4}$ only when $k$ is odd,
$f(z)$ is dihedral, $\chi = (\frac{\cdot}{N/4})$, and $r_1(4h/N) \neq 0$.

\begin{remark} \label{rem:general-main-term}
For $N$ with generic square part and odd level $k$, the residual spectrum has a basis consisting of (finitely many) lifts of theta functions of the form $y^{1/4} \theta_{\psi,t}(z)$ introduced in~\S{\ref{subsec:discrete-spectrum}}.
Since these lifts can all be recognized as residues of weight $\kappa$ Eisenstein series on $\Gamma_0(N)$, Templier--Tsimerman claim that analogous expressions for $\Sigma_{\mathrm{res}}$ exist for general $N$. In particular,
\begin{align} \label{eq:residual-general}
\Sigma_{\mathrm{res}}(s)
  = c_{f,h} \cdot \frac{(-1)^{\frac{k-1}{2}} \pi^{\frac{1}{2}} }{h^{s-\frac{3}{4}}}
      \cdot \frac{\Gamma(s-\frac{1}{4})\Gamma(s-\frac{3}{4})}
          {\Gamma(s+\frac{k}{2}-\frac{3}{4})\Gamma(s-\frac{\kappa}{2})},
\end{align}
in which $c_{f,h}=0$ unless the following conditions from~\cite[\S{4.7}]{TemplierTsimerman12} are met:
\begin{itemize}
\item[a.] $f(z)$ is a dihedral form of odd weight $k$;
\item[b.] $\frac{N}{4} \mid h$ with $h>0$ (correcting an error in~\cite{TemplierTsimerman12});
\item[c.] If $h$ has square-free part $h_0$ and $L$ is the conductor of $\chi \chi_h$, then $h_0 L^2$ divides $N/4$.
\end{itemize}
\end{remark}
\noindent The non-obvious choice of normalization in~\eqref{eq:residual-general} has been chosen so that the residue of $\Sigma_{\mathrm{res}}(s)$ at $s=\frac{3}{4}$ equals $c_{f,h}$.

We remark that the generalized argument of Templier--Tsimerman is incomplete, as it ignores complications with the slash operators and the contribution of oldforms, which unfold at levels lower than $N$. A similar oversight regarding mismatched levels of unfolding in~\cite{TemplierTsimerman12} is noted by~\cite{Watkins2019} prior to footnote 17.

\section{Averages for Fourier Coefficients} \label{sec:fourier-coefficient-averages}

To study the partial sums $\sum_{n \leq X} A(n^2+h)$, we require certain information about the growth of $D_h(s)$ in vertical strips.  To this end, we present here a few point-wise and on-average bounds involving the Fourier coefficients of half-integral weight Maass forms.

To give context for our on-average results, we first quote a conditional point-wise result for the Fourier coefficients of a Maass form.

\begin{lemma}[cf.\ \cite{Duke88}, Theorem 5] \label{lem:Duke-bound}
Assume the generalized Lindel\"{o}f hypothesis and the Ramanujan--Petersson conjecture.
Let $\mu_j$ be an $L^2$-normalized Maass form of (half-integral) weight $\kappa$ on $\Gamma_0(N)$, with Fourier expansion of the form~\eqref{eq:Fourier-Maass}. If $n$ is a fundamental discriminant, we have
\[\rho_j(n) \ll_{\kappa,N,\epsilon} (1+\vert t_j \vert)^{- \frac{\kappa}{2} \mathrm{sgn}(n) +\epsilon} e^{\frac{\pi}{2} \vert t_j \vert} \vert n \vert^{-\frac{1}{2}+\epsilon}\]
for any $\epsilon > 0$.
\end{lemma}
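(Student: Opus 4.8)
The plan is to deduce this conditional bound from a Waldspurger-type identity expressing $|\rho_j(n)|^2$, for $n$ a fundamental discriminant, in terms of the central value of a twisted $L$-function attached to the Shimura lift of $\mu_j$, and then to feed in the generalized Lindel\"of hypothesis for that central value, the Ramanujan--Petersson conjecture for the bad local factors, and Stirling's formula for the archimedean factor.

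First I would pass to a convenient base weight: using the raising operators $R_\eta$ together with the norm identity \eqref{eq:raising-L^2} one replaces $\mu_j$ by its lift (or descent) to weight $\tfrac12$ or $\tfrac32$ — the minimal weight in its archimedean type — at the cost of a constant that is polynomial in $1+|t_j|$ and bounded in terms of $\kappa$; alternatively one invokes a Waldspurger formula stated directly in weight $\kappa$, in the spirit of \cite{KatokSarnak93} and later work of Baruch and Mao. Either way, for a fundamental discriminant $n$ one obtains a clean identity of the shape
\[
  |\rho_j(n)|^2 \;=\; C_{\mu_j}\,\cdot\,\omega_\infty(\kappa,t_j,\mathrm{sgn}(n))\,\cdot\,\frac{L(\tfrac12,\,\phi_j\otimes\chi_n)}{|n|},
\]
where $\phi_j$ is the integral-weight Shimura lift of $\mu_j$ (a Hecke--Maass form of level $O_N(1)$, spectral parameter $2t_j$, and quadratic-twisted nebentypus), $\chi_n$ is the quadratic character attached to $n$, $\omega_\infty$ is an explicit ratio of gamma factors produced by the Whittaker integrals $\int_0^\infty W_{\frac{\kappa}{2}\mathrm{sgn}(n),\,it_j}(y)\,(\cdots)\,\tfrac{dy}{y}$, and $C_{\mu_j}$ absorbs the $L^2$-normalization of $\mu_j$ together with edge values of $L(\cdot,\mathrm{Sym}^2\phi_j)$; crucially, the assumption that $n$ is \emph{fundamental} is exactly what removes the divisor sum that otherwise appears in such formulas. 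Under Ramanujan--Petersson, $\phi_j$ is tempered, so $t_j\in\mathbb{R}$, the bad local factors and $C_{\mu_j}$ are $\ll_{N,\epsilon}(1+|t_j|)^{\epsilon}|n|^{\epsilon}$, and no exceptional-eigenvalue contribution arises.

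It then remains to estimate the two non-elementary factors. A uniform Stirling analysis of $\omega_\infty$ — the same gamma-quotient bookkeeping used to track \eqref{eq:discrete-spectrum} — shows that $\omega_\infty$ contributes $(1+|t_j|)^{-\kappa\,\mathrm{sgn}(n)}e^{\pi|t_j|}$ to $|\rho_j(n)|^2$: the exponential is the familiar cost of $L^2$-normalization (the relevant Whittaker/$K$-Bessel function is exponentially small in $|t_j|$), while the power of $1+|t_j|$ records that the Whittaker index equals $\tfrac{\kappa}{2}\mathrm{sgn}(n)$ rather than $0$. The generalized Lindel\"of hypothesis applied to the $\mathrm{GL}(2)$ twist $\phi_j\otimes\chi_n$, whose analytic conductor is $\ll_N (1+|t_j|)^2|n|^2$, gives $L(\tfrac12,\phi_j\otimes\chi_n)\ll_{N,\epsilon}\big((1+|t_j|)|n|\big)^{\epsilon}$. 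Inserting both into the displayed identity and taking square roots yields $\rho_j(n)\ll_{\kappa,N,\epsilon}(1+|t_j|)^{-\frac{\kappa}{2}\mathrm{sgn}(n)+\epsilon}e^{\frac{\pi}{2}|t_j|}|n|^{-\frac12+\epsilon}$, as claimed.

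I expect the only real obstacle to be bookkeeping rather than ideas: pinning down the precise Waldspurger/Katok--Sarnak identity at level $N$ and multiplier $\chi\chi_{-1}^k\upsilon_\theta^{-1}$ — identifying the Shimura lift's exact level and nebentypus, controlling oldform contributions and the constants at primes dividing $N$ — and carrying out the Stirling estimate for $\omega_\infty$ uniformly enough to extract exactly the exponent $-\tfrac{\kappa}{2}\mathrm{sgn}(n)$. Since this pointwise estimate is used only for context (our main argument replaces it by the averaged bounds of \S\ref{sec:fourier-coefficient-averages} and Appendix \ref{sec:appendix}), it suffices to record the identity and the two analytic inputs and refer to \cite{KatokSarnak93, Duke88} for the remaining details.
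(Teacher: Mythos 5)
The paper does not actually prove this lemma: it cites Duke's Theorem 5 and, immediately after the statement, explains that Duke's theorem is the \emph{unconditional} version (with weaker $t_j$-dependence), the conditional form being recorded purely for orientation --- the working estimates are the averaged bounds of Propositions~\ref{prop:spectral-average-appendix} and~\ref{prop:m-average}, not this pointwise one. Your proposal fills in the proof that the paper declines to give, and the route is the standard one: a Katok--Sarnak/Waldspurger identity expressing $|\rho_j(n)|^2$ (for $n$ a fundamental discriminant) as an archimedean gamma factor times $L(\tfrac12,\phi_j\otimes\chi_n)/|n|$, with Lindel\"of controlling the central value, Ramanujan--Petersson guaranteeing temperedness of the Shimura lift $\phi_j$ (hence $t_j\in\mathbb{R}$) and bounding the bad local factors, and Stirling extracting $(1+|t_j|)^{-\kappa\operatorname{sgn}(n)+\epsilon}e^{\pi|t_j|}$ from the Whittaker normalization. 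The exponents are consistent with what Proposition~\ref{prop:m-average} gives on average in $m$, which is a useful sanity check. The one place I would press you is the bookkeeping you yourself flag: the Katok--Sarnak identity is cleanest at level $4$ and trivial nebentypus, and extending it to general level $N$ and multiplier $\chi\chi_{-1}^k\upsilon_\theta^{-1}$ requires the Baruch--Mao formalism (or a careful adelic version) and a treatment of oldforms; it is precisely this that makes a from-scratch proof nontrivial and why the paper cites Duke rather than redoing the argument. As a proof sketch to accompany a ``cf.'' citation, your proposal is correct in both outline and exponents.
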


The main content of~\cite[Theorem 5]{Duke88} is an unconditional version of Lemma~\ref{lem:Duke-bound}, though
the $t_j$-dependence in the unconditional bound is too weak for our applications. For adequate unconditional
results, we require some amount of averaging. Our first average is a spectral second moment for half-integral
weight Maass forms which improves~\cite[Lemma 5]{Blomer08}.

\begin{proposition}\label{prop:spectral-average-appendix}
	Let $\{\mu_j\}$ denote an orthonormal basis of weight $\kappa =k-\frac{1}{2}$ Maass forms on $\Gamma_0(N)$ with multiplier system $\chi \chi_{-1}^k \upsilon_\theta^{-1}$ and Fourier expansions of the form~\eqref{eq:Fourier-Maass}.
	For any $T \geq 1$ and $\epsilon > 0$,
	\[
	\sum_{\substack{\vert t_j \vert \leq T \\ t_j \in \mathbb{R}}}
	\frac{|n|}{\cosh(\pi t_j )} \vert \rho_{j}(n) \vert^2 (1+ \vert t_j \vert)^{\kappa \, \mathrm{sgn}(n)}
	\ll_{k,N,\epsilon} T^2 + \vert n \vert^{\frac{1}{2}+\epsilon}.
	\]
\end{proposition}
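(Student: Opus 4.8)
The plan is to obtain the bound by applying a Kuznetsov-type (Petersson--Kuznetsov) spectral summation formula for half-integral weight Maass forms on $\Gamma_0(N)$ with multiplier $\chi\chi_{-1}^k\upsilon_\theta^{-1}$, in a form that isolates a smooth spectral sum over $|t_j| \le T$. First I would introduce a smooth, even, non-negative test function $h_T(t)$ on $\mathbb{R} \cup i(-\tfrac{1}{2},\tfrac{1}{2})$ which majorizes $\cosh(\pi t)^{-1}(1+|t|)^{\kappa\operatorname{sgn}(n)}\mathbf{1}_{|t|\le T}$ on the tempered spectrum and decays rapidly for $|t| > T$ (for instance $h_T(t) = e^{-(t/T)^2}(1+|t|)^{\kappa\operatorname{sgn}(n)}\cosh(\pi t)^{-1}$ suitably smoothed, with the exceptional-spectrum contribution controlled separately since there are only finitely many such $t_j$ and their Fourier coefficients are bounded by Lemma~\ref{lem:Duke-bound}-type estimates together with the Kim--Sarnak bound $\Theta \le 7/64$). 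Applying the Kuznetsov formula to $\sum_j h_T(t_j)\,|\rho_j(n)|^2\,\tfrac{|n|}{\cosh(\pi t_j)}$ — note the archimedean normalization of $\rho_j(n)$ in~\eqref{eq:Fourier-Maass} is exactly the one for which Kuznetsov is clean — produces a diagonal term and an off-diagonal term:
\[
\sum_j h_T(t_j)\,\frac{|n|}{\cosh(\pi t_j)}\,|\rho_j(n)|^2
= \mathscr{D} + \mathscr{E} + (\text{Eisenstein}),
\]
where $\mathscr{D}$ is proportional to $\widehat{h_T}$-type integral against $1$ (giving $\ll T^2$ after accounting for the weight $(1+|t|)^{\kappa\operatorname{sgn}(n)}$ and the $t$-integral of $h_T$), and $\mathscr{E}$ is a sum of half-integral weight Kloosterman (Salié-type) sums $S_{\chi,\upsilon_\theta}(n,n;c)$ weighted by a Bessel transform $\check{h}_T(4\pi|n|/c)$.

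The second step is to bound the Eisenstein contribution and to discard it: since the full left-hand side of Proposition~\ref{prop:spectral-average-appendix} is a sum of non-negative terms over the cuspidal spectrum only, and the Kuznetsov formula gives cuspidal $+$ Eisenstein on the spectral side, I would move the (non-negative, by positivity of $h_T$ on the continuous spectrum) Eisenstein integral to the geometric side; equivalently, one bounds it directly via the standard estimate $\rho_{\mathfrak a}(n,\tfrac12+it)\ll_{N,\epsilon}|n|^{\epsilon}$ and $\int h_T \ll T\cdot(1+T)^{\kappa\operatorname{sgn}(n)}$... but actually the cleaner route is the one-sided Kuznetsov inequality: positivity of $h_T$ lets us write
\[
\sum_{|t_j|\le T}\frac{|n|}{\cosh(\pi t_j)}|\rho_j(n)|^2(1+|t_j|)^{\kappa\operatorname{sgn}(n)}
\ \le\ \mathscr{D} + |\mathscr{E}| + |\text{Eisenstein}|,
\]
so only upper bounds for $\mathscr{D}$, $\mathscr{E}$, and the Eisenstein term are needed. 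The diagonal $\mathscr{D}$ contributes $\ll_{k,N,\epsilon}T^{2}$ (times $|n|^{\epsilon}$, absorbed), since $\int_{-\infty}^{\infty}h_T(t)\,t\tanh(\pi t)\,\frac{dt}{\pi^2}\asymp T^{2}$ after including the polynomial weight — here is where the shape $T^2$ in the claimed bound comes from. The Eisenstein term is handled the same way and is also $\ll T^2$ or smaller.

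The third and main step is the off-diagonal $\mathscr{E}$. Using the uniform estimates for the Bessel-transform kernel $\check h_T$ — it is $O(1)$ (times a polynomial in $T$ absorbed into $|n|^\epsilon$ after optimizing, or rather handled by the explicit $T$-decay) for argument $\gtrsim 1$ and decays like a power of $(4\pi|n|/c)$ together with $T$-localization for large argument — one reduces to estimating
\[
\sum_{c\equiv 0 (N)} \frac{1}{c}\,\big|S_{\chi,\upsilon_\theta}(n,n;c)\big|\,\big|\check h_T(4\pi|n|/c)\big|.
\]
For half-integral weight with the theta-multiplier the relevant Kloosterman sums are essentially Salié sums, which have the key feature of admitting square-root cancellation in an elementary (non-Weil) way: $|S_{\chi,\upsilon_\theta}(n,n;c)|\ll_{N} (n,c)^{1/2}c^{1/2}d(c)$, and more importantly they evaluate as Gauss-sum-twisted exponential sums so that the $c$-sum itself exhibits additional cancellation. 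I expect the main obstacle to be precisely this: extracting enough from the $c$-sum of Salié sums (either via the Salié evaluation and then a second application of Poisson/completion in $c$, following Blomer's Lemma~5 argument but pushing the uniformity in $n$), to arrive at a total off-diagonal bound of $\ll_{k,N,\epsilon}|n|^{1/2+\epsilon}$ rather than the weaker $|n|^{1/2}T^{\epsilon}+T\,|n|^{\epsilon}$-type bound one gets from trivial estimation of each Salié sum. The balance point where the $c$-sum truncation (at $c \ll |n|/\text{(something involving }T)$) trades off against the Salié cancellation is what yields $T^2 + |n|^{1/2+\epsilon}$; a careful stationary-phase / completion analysis of $\check h_T$ in the transition range $c\asymp |n|$ is the delicate part, and this is exactly what is deferred to Appendix~\ref{sec:appendix}. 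I would assemble $\mathscr D \ll T^2$, Eisenstein $\ll T^2$, and $\mathscr E \ll |n|^{1/2+\epsilon}$ to conclude.
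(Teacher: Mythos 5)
Your high-level strategy (a Kuznetsov-type pre-trace formula specialized to $m=n$, positivity to drop the Eisenstein integral and the non-tempered discrete spectrum, a Weil-type bound for the half-integral Kloosterman sums, and a bound on the Bessel-transform kernel) matches the appendix. However, there are two substantive differences, one cosmetic and one genuine.

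On the cosmetic side, the appendix does not insert a smooth majorant $h_T(t)$; it instead multiplies the pre-trace formula of Proposition~\ref{prop:appendix-pre-trace} by $2\pi t\,\lvert\Gamma(1\mp\tfrac{\kappa}{2}+it)\rvert^{-2}$ and integrates $t$ over $[0,T]$ with the sharp cutoff. The resulting spectral weight $H_{\pm\kappa}(r,T)$ is shown to be non-negative everywhere and $\gg (1+\lvert r\rvert)^{\kappa}$ for $\lvert r\rvert\le T$, which is exactly what is needed to lower-bound the target sum. Your Gaussian majorant would in principle also work; the sharp cutoff is just cleaner here because the geometric-side kernel $G_{\kappa}(\omega,T)=\int_0^{T}tI_{\kappa}(\omega,t)\,dt$ can be handled directly.

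The genuine gap is in your treatment of the off-diagonal. You locate the difficulty in the $c$-sum of Sali\'e sums, conjecturing that one must exploit the explicit Gauss-sum evaluation plus a Poisson step in $c$ to beat the $T\,\lvert n\rvert^{\epsilon}$-type term. That is \emph{not} what the appendix does, and in fact the appendix extracts no cancellation from the $c$-sum at all: it applies the absolute-value bound $\lvert K_{\ell}(m,m;c;\chi)\rvert\le 4\tau(c)(m,c)^{1/2}c^{1/2}N^{1/2}$ (Proposition~\ref{prop:appendix-Kloos-bound}) term by term. The entire improvement over Blomer's Lemma~5 instead comes from Proposition~\ref{prop:appendix-oscill-int}, a bound on the oscillatory integral which is uniform in $T$,
\[
G_{\kappa}(\omega,T)\ll_\kappa\min\bigl\{\omega^{1/2},\ \omega(1+\lvert\log\omega\rvert)\bigr\},
\]
and whose proof hinges on a somewhat hidden vanishing phenomenon: after integrating by parts, the completed integral $\int_0^{\infty}\sin(q-\pi\tfrac{\kappa}{2})q^{\kappa-1}\,dq$ vanishes identically for $0<\kappa<1$ (this is~\eqref{eq:appendix-second-int-completion}, the ``serendipity'' that Andersen--Duke noticed in the case $\kappa=\tfrac{1}{2}$). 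This kills the $T$-dependence in the off-diagonal entirely, so that the $c$-sum converges absolutely to $\ll(m,N)^{1/2}\lvert m\rvert^{1/2}N^{-1/2}\lvert mN\rvert^{o(1)}$ with nothing left to gain from cancellation in $c$. Without identifying this vanishing (or an equivalent uniform-in-$T$ estimate on the Bessel transform), your plan as written would stall at exactly the $T\lvert n\rvert^{\epsilon}$-type obstruction you flag, and the route you propose to overcome it (cancellation in the $c$-sum of Sali\'e sums) is both unnecessary and considerably harder to execute uniformly in $N$ and $n$.
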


We note that Proposition~\ref{prop:spectral-average-appendix} implies that Duke's conditional result holds
unconditionally in the long average over $\vert t_j \vert \sim T$ when $\vert n \vert  \ll T^4$. The proof of Proposition~\ref{prop:spectral-average-appendix} is due to Raphael Steiner and appears in Appendix~\ref{sec:appendix}.

We can also produce strong on-average results for $\rho_j(m)$ in the $m$-aspect
by refining the method leading to~\cite[Lemma 19.3]{DFI02}. We show how to obtain the following
$m$-average upper bound.

\begin{proposition} \label{prop:m-average}
Let $\mu_j$ be an $L^2$-normalized weight $\kappa = k-\frac{1}{2}$ Maass form on $\Gamma_0(N)$ with multiplier $\chi \chi_{-1}^k \upsilon_\theta^{-1}$ and Fourier expansion~\eqref{eq:Fourier-Maass}. Then
\[\sum_{m \sim M} \vert \rho_{j}(\pm m) \vert^2 \ll_{\kappa,N} (1+\vert t_j \vert)^{ \mp \kappa} \Big(1+\frac{\vert t_j \vert}{M}\Big) e^{\pi \vert t_j \vert}\]
for any $M > 1$ and any choice of sign $\pm$.
\end{proposition}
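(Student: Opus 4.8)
The plan is to extract the $m$-average of $|\rho_j(\pm m)|^2$ from the $L^2$-normalization $\|\mu_j\|^2 = 1$ by integrating the Fourier expansion~\eqref{eq:Fourier-Maass} over a suitable region of $\Gamma_0(N)\backslash\mathbb{H}$. Concretely, Parseval in the $x$-variable over a period $[0,1]$ gives
\[
\int_0^1 |\mu_j(x+iy)|^2\,dx = \sum_{n\neq 0} |\rho_j(n)|^2\, \big|W_{\frac{n\kappa}{2|n|}, it_j}(4\pi|n|y)\big|^2,
\]
and integrating this against $dy/y^2$ over a fixed-height strip $y \in [Y, 2Y]$ (chosen to lie inside a fundamental domain, so the left side is bounded by $\|\mu_j\|^2 = 1$, up to the number of translates of the strip covering the cusp region — a constant depending only on $N$) produces
\[
\sum_{n\neq 0} |\rho_j(n)|^2 \int_Y^{2Y} \big|W_{\frac{n\kappa}{2|n|}, it_j}(4\pi |n| y)\big|^2\,\frac{dy}{y^2} \ll_{\kappa,N} 1.
\]
Restricting the sum to $n = \pm m$ with $m \sim M$ and choosing $Y$ so that $4\pi M Y$ sits in the sweet spot of the Whittaker function then isolates $\sum_{m\sim M}|\rho_j(\pm m)|^2$ against a lower bound for the Whittaker integral.

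The technical heart is therefore a \emph{lower} bound for
\[
\int_Y^{2Y} \big|W_{\pm\frac{\kappa}{2}, it_j}(4\pi m y)\big|^2\,\frac{dy}{y^2},
\]
uniform in $m \sim M$ and in $t_j$. After the substitution $u = 4\pi m y$ this becomes $4\pi m \int_{4\pi m Y}^{8\pi m Y} |W_{\pm\kappa/2, it_j}(u)|^2\, u^{-2}\,du$. I would choose $Y \asymp (1+|t_j|)/M$ so that the interval of integration is centered near $u \asymp 1+|t_j|$, i.e.\ just below the turning point $u \approx 2|t_j|$ of the Whittaker differential equation, where $W_{\eta,it}(u)$ oscillates and has size comparable to $e^{-\pi|t|/2}(1+|t|)^{\text{(power depending on the sign of }\eta)}$ on average. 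The relevant uniform asymptotics — that $W_{\eta, it}(u) \asymp e^{-\pi|t|/2} u^{?}|t|^{?}$ with the correct exponents on a fixed proportion of any dyadic $u$-interval in the oscillatory range — are exactly the ``uniform bounds for the Whittaker function'' the introduction advertises as refining~\cite[\S19]{DFI02}; these come from standard uniform (Airy-type or WKB) asymptotics for confluent hypergeometric functions, with the factor $(1+|t_j|)^{\mp\kappa}$ emerging from the $\eta = \pm\kappa/2$ dependence of the normalization constant. When $M \gg 1+|t_j|$ the chosen $Y \asymp (1+|t_j|)/M$ might fall below a fixed absolute constant, so one should instead take $Y$ bounded below and absorb the resulting loss; this is precisely the source of the $1 + |t_j|/M$ factor in the statement, rather than the bare $1$ one would hope for. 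Bookkeeping the cusp region carefully (only finitely many $\Gamma_\infty$-translates of the strip can overlap, and other cusps only help) keeps the implied constant depending on $\kappa, N$ alone.

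The main obstacle is making the Whittaker lower bound genuinely \emph{uniform} in both $u$ (equivalently $M$) and $t_j$ simultaneously, including the transitional range $u \asymp |t_j|$ near the turning point and the subtlety that $W_{\eta,it}$ can have zeros, so one cannot lower-bound it pointwise — only in mean square over a dyadic interval. The remedy is to integrate over an interval long enough (a full dyadic range in $y$, hence in $u$) to span several oscillations, so that the mean-square lower bound is insensitive to the location of zeros; this is standard once the uniform asymptotics are in hand but requires care to state the exponent on $(1+|t_j|)$ correctly for each sign of the Whittaker index, matching the $(1+|t_j|)^{\mp\kappa}$ in Proposition~\ref{prop:m-average}. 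Everything else — Parseval, the fundamental-domain geometry, and the change of variables — is routine.
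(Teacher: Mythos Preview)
Your strategy is exactly the paper's: Parseval in $x$, integrate in $y$ over a high strip, and invoke a uniform lower bound for the Whittaker integral to isolate $\sum_{m\sim M}|\rho_j(\pm m)|^2$. The paper integrates over $y\in[Y,\infty)$ rather than a dyadic window, and packages the Whittaker lower bound as Lemma~\ref{lem:strong-DFI-technical-lemma}, but these are cosmetic differences.

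There is, however, one genuine misstep. You treat the number of $\Gamma_0(N)$-translates of $[0,1]\times[Y,2Y]$ as a constant depending only on $N$, and then propose to handle the regime $M\gg 1+|t_j|$ (where $Y\asymp(1+|t_j|)/M$ is small) by ``taking $Y$ bounded below and absorbing the resulting loss.'' That does not work: with $Y$ bounded below and $m\sim M$, the Whittaker argument $4\pi mY\asymp M\gg|t_j|$ lies past the turning point, where $W_{\pm\kappa/2,it_j}$ decays \emph{exponentially}, so there is no polynomial loss to absorb. The correct mechanism --- and the actual source of the $1+|t_j|/M$ factor --- is to keep $Y\asymp|t_j|/M$ even when it is small and to note that the strip $[0,1]\times[Y,\infty)$ meets each $\Gamma_0(N)$-orbit in $O_N(1+Y^{-1})$ points, so the left side is bounded by $1+Y^{-1}\asymp 1+M/|t_j|$ rather than by a constant. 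After dividing through by the Whittaker lower bound $\gg M\,|t_j|^{\pm\kappa-1}e^{-\pi|t_j|}$, this $1+M/|t_j|$ becomes precisely $(1+|t_j|/M)\,|t_j|^{\mp\kappa}e^{\pi|t_j|}$. The paper also treats $|t_j|\ll_\kappa 1$ (including imaginary $t_j$) separately by a compactness argument, which your sketch should acknowledge.
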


This result implies that Duke's conditional result holds unconditionally in the long average over $\vert m \vert \sim M$, provided $\vert t_j \vert \ll M$. To prove Proposition~\ref{prop:m-average}, we require a strengthened form of~\cite[Lemma 19.2]{DFI02}, which in turn relies on the following uniform estimate for the Whittaker function.

\begin{lemma} \label{lem:uniform-Whittaker-bound}
For $y>0$, $\eta$ real, and $t \geq 1$, we have
\[W_{\eta,it}(y) \ll_\eta t^{\eta - \frac{1}{2}} e^{-\frac{\pi}{2} t} \cdot y^{\frac{1}{2}}\]
uniformly in the interval $0 \leq y \leq \frac{3}{2} t$.
\end{lemma}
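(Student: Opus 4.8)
The plan is to obtain the bound from the known large-$|y|$ and small-$|y|$ asymptotics of the Whittaker function, but tracking the dependence on $t$ uniformly. The target regime $0 \le y \le \tfrac{3}{2}t$ sits inside the oscillatory range of $W_{\eta,it}(y)$, so the estimate should follow from a uniform integral representation rather than from the classical $y \to \infty$ expansion (which only kicks in past $y \sim t^2$). The cleanest route is the contour-integral (Mellin--Barnes) representation
\[
W_{\eta,it}(y) = \frac{e^{-y/2} y^{\eta}}{2\pi i} \int_{(\sigma)} \frac{\Gamma(s)\,\Gamma(s - 2\eta + 1)}{\Gamma(s+\tfrac12 - \eta + it)\,\Gamma(s+\tfrac12 - \eta - it)}\, y^{-s}\, ds,
\]
valid for suitable $\sigma$, together with Stirling's formula applied to the four $\Gamma$-factors. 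The extra factor $y^{1/2}$ relative to the naive size $e^{-y/2}y^{\eta}$ should emerge from shifting the contour and exploiting the cancellation in $\Gamma(s + \tfrac12 - \eta \pm it)$ in the denominator, whose modulus grows like $e^{-\pi t/2}$ times a power of $t$ when $|{\Im s}|$ is small compared to $t$.

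First I would fix $\eta$ and reduce to proving the bound for $W_{\eta,it}(y) e^{y/2}$, since the exponential decay is harmless here and the claim is really about the polynomial-in-$y$ transition. I would split the interval $0 \le y \le \tfrac32 t$ into, say, $0 < y \le 1$ and $1 \le y \le \tfrac32 t$. On $0<y\le 1$ the small-argument behaviour $W_{\eta,it}(y) \asymp$ (a combination of $y^{\tfrac12 + it}$ and $y^{\tfrac12 - it}$ with coefficients built from $\Gamma$-factors) gives $W_{\eta,it}(y) \ll t^{\eta - \tfrac12} e^{-\pi t/2} y^{1/2}$ directly once Stirling is applied to the connection coefficients $\Gamma(\mp 2it)/\Gamma(\tfrac12 - \eta \mp it)$; this is where the power $t^{\eta - 1/2}$ and the exponential $e^{-\pi t /2}$ are born, and the factor $y^{1/2}$ is exact rather than an inequality. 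On $1 \le y \le \tfrac32 t$ I would instead use the Mellin--Barnes contour at $\Re s = \tfrac12$ (so the $y^{-s}$ contributes exactly $y^{-1/2}$ against $y^{\eta}$, consistent with the claimed $y^{1/2}$), bound the integrand by Stirling, and check the $s$-integral converges with the stated $t$-dependence uniformly for $y$ in this range. Since $|\Im s|$ ranges over all of $\mathbb{R}$ while $y \le \tfrac32 t$, the genuinely delicate point is the region $|\Im s| \gtrsim t$, where the denominator $\Gamma$-factors stop decaying like $e^{-\pi t/2}$; there one needs the $y^{-\Im s}$-type decay (or rather the rapid decay of the numerator $\Gamma(s)\Gamma(s-2\eta+1)$) to beat the growth, and the constraint $y \le \tfrac32 t$ is exactly what makes the saddle/endpoint analysis close.

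Alternatively — and this may be cleaner to write — one can invoke a known uniform asymptotic for $W_{\eta,it}(y)$ in the oscillatory regime in terms of trigonometric functions with amplitude $\asymp t^{\eta - 1/2} e^{-\pi t/2} (y(2t-y))^{-1/4} \cdot y^{1/2}$ or similar (such expansions appear in treatments of the Whittaker/parabolic-cylinder transition, e.g.\ via Olver's uniform theory), and simply observe that for $0 \le y \le \tfrac32 t$ the quantity $(2t - y)^{-1/4}$ is $\ll t^{-1/4} \ll 1$ after absorbing constants, while the trigonometric factor is $O(1)$. The main obstacle in either approach is the uniformity as $y$ approaches the upper endpoint $\tfrac32 t$: there the "frequency" of oscillation degenerates and one must make sure no hidden factor of $\log$ or $(t-y)^{-c}$ sneaks in. I expect to handle this by noting $\tfrac32 t$ is bounded away from the true turning point $y = 2t$, so the relevant Airy-type correction never activates and the simpler trigonometric bound suffices throughout $[0, \tfrac32 t]$. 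Once Lemma~\ref{lem:uniform-Whittaker-bound} is in hand, it feeds directly into the strengthened form of~\cite[Lemma 19.2]{DFI02} needed for Proposition~\ref{prop:m-average}.
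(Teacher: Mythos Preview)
Your second alternative --- invoking Olver's uniform asymptotic theory for the Whittaker equation in the oscillatory regime, and noting that $y \le \tfrac{3}{2}t$ stays bounded away from the turning point at $y = 2t$ --- is exactly the paper's proof. The paper simply cites Dunster's execution of the Liouville--Green approximation for $W_{\eta,it}$ (removing Dunster's $\eta$-uniformity and staying in the ``$s \le s^+$'' oscillatory case), and observes that away from the turning point the simpler error analysis in Olver's chapter~6 already suffices.

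Your primary Mellin--Barnes approach is genuinely different and more hands-on. The small-$y$ piece via the connection formula $W = \sum_{\pm} \frac{\Gamma(\mp 2it)}{\Gamma(\tfrac12 - \eta \mp it)} M_{\eta,\pm it}$ is clean and does give the claimed $t^{\eta-1/2} e^{-\pi t/2} y^{1/2}$ for $y \ll 1$. The sticking point is the range $1 \le y \le \tfrac{3}{2}t$: fixing the contour at $\Re s = \tfrac12$ and bounding the integrand pointwise by Stirling will not close, because the mass of the $s$-integral concentrates near $\lvert \Im s \rvert \approx t$ where the denominator $\Gamma$-factors stop contributing exponential decay, and the balance between the numerator decay and the $y^{-s}$ oscillation is governed by a genuine saddle. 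You correctly flag this as the delicate step, but carrying it out amounts to redoing the WKB analysis in the Mellin domain --- doable, but no shorter than citing Dunster. The paper's route buys you the uniform bound as a black box; your route would make the mechanism more explicit at the cost of reproducing a chapter of Olver.
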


\begin{proof}
This result essentially follows from the Liouville--Green approximation of the differential equation for $W_{\eta,it}(y)$, as described in~\cite[ch. 6]{Olver74}.  Our specific application to the Whittaker function is not new, and indeed appears in~\cite{Dunster03}.

In particular, Lemma~\ref{lem:uniform-Whittaker-bound} follows from~\cite[p.\ 210-211]{Dunster03} by removing $\eta$-uniformity from the ``$s \leq s^+$'' case. (Note that $\mu^{-1/6}$ in~\cite[(4.14)]{Dunster03} should read $\mu^{1/6}$.) The restriction to $y \leq \frac{3}{2} t$ (as opposed to $y \approx 2 t$, where the Whittaker function stops oscillating) simplifies our expression further by avoiding complications near the ``turning point'' at $s^+$. (Away from the turning point, we note that one may appeal to the simpler error analysis of~\cite[ch.~6, \S{1-5}]{Olver74} and avoid Dunster's use of~\cite[ch.~11]{Olver74} entirely.)
\end{proof}

We now give a strengthened form of~\cite[Lemma 19.2]{DFI02}.

\begin{lemma} \label{lem:strong-DFI-technical-lemma}
There exists a constant $\alpha > 0$ depending only on $\kappa$ for which
\[\int_{\alpha t}^{\infty} W_{\pm \frac{\kappa}{2},it}(4\pi y)^2 \frac{dy}{y^2} \gg_\kappa  t^{\pm \kappa-1} e^{-\pi t},\]
uniformly in $t \geq 1$.
\end{lemma}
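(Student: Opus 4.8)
The plan is to restrict the integral to a short window on which the rescaled Whittaker function is genuinely oscillatory and lies well below its turning point, and there to replace it by the leading term of its Liouville--Green approximation. Write $\eta := \pm\tfrac{\kappa}{2}$, so the asserted bound reads $\int_{\alpha t}^{\infty} W_{\eta, it}(4\pi y)^2\,\tfrac{dy}{y^2} \gg_\kappa t^{2\eta-1}e^{-\pi t}$. Recall that $W_{\eta, it}(u)$ solves Whittaker's equation $W'' + Q(u)W = 0$ with $Q(u) = -\tfrac14 + \tfrac{\eta}{u} + \tfrac{1/4+t^2}{u^2}$, whose turning point is $u_0 = 2\eta + 2\sqrt{t^2+\eta^2+\tfrac14} = 2t + O_\kappa(1)$, the solution oscillating on $0 < u < u_0$. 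I would fix $\alpha = \tfrac{1}{8\pi}$, so that $y \in [\alpha t,\, \tfrac{1}{4\pi}t]$ corresponds to $u := 4\pi y \in [\tfrac t2,\, t]$; for $t$ large in terms of $\kappa$ this window sits a fixed proportion below $u_0$, and there $Q(u) \asymp_\kappa 1$ is bounded away from $0$ and $\infty$. Since for $1 \le t \le t_0(\kappa)$ the left side of the lemma is a positive continuous function of $t$ and the right side is bounded, the inequality is trivial there, and we may assume $t$ large.

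On the window $u \in [\tfrac t2, t]$ I would carry out the Liouville--Green analysis of the Whittaker equation with large spectral parameter, as in the proof of Lemma~\ref{lem:uniform-Whittaker-bound} (so, via~\cite[p.\ 210--211]{Dunster03}, or directly~\cite[ch.\ 6]{Olver74} since we stay away from the turning point), but now keeping the leading term rather than merely its size. This gives an expansion of the shape
\[
  W_{\eta, it}(u) = \mathcal{A}(t)\Bigl(Q(u)^{-1/4}\cos\Phi(u) + \mathcal{E}(u)\Bigr), \qquad u \in [\tfrac t2,\, t],
\]
in which $\Phi$ is a real phase with $\Phi'(u) = \pm\sqrt{Q(u)} \asymp_\kappa 1$, the remainder satisfies $\mathcal{E}(u) \ll_\kappa t^{-1}$, and the amplitude has size $\mathcal{A}(t) \asymp_\kappa t^{\eta}e^{-\pi t/2}$. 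Establishing this last fact is the heart of the matter: $\mathcal{A}(t)$ is the connection coefficient that carries the recessive normalization $W_{\eta, it}(u) \sim u^{\eta}e^{-u/2}$ (as $u \to \infty$) back through the turning point, and because a recessive solution always links to a nonvanishing oscillation, $\mathcal{A}(t)$ admits a two-sided bound by a $\kappa$-dependent multiple of $t^{\eta}e^{-\pi t/2}$. (The exponential scale $e^{-\pi t/2}$ is the one occurring in the connection formula $W_{\eta, it} = \tfrac{\Gamma(-2it)}{\Gamma(1/2-\eta-it)}M_{\eta, it} + \overline{\bigl(\tfrac{\Gamma(-2it)}{\Gamma(1/2-\eta-it)}\bigr)}M_{\eta, -it}$, whose $\Gamma$-quotient has modulus $\asymp_\kappa t^{\eta-1/2}e^{-\pi t/2}$ by Stirling; it reflects that $u \asymp t$ lies well below $u_0 \approx 2t$. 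The upper bound on $\mathcal{A}(t)$ is consistent with Lemma~\ref{lem:uniform-Whittaker-bound}, which only proves an upper bound; the new ingredient is the matching lower bound.)

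Substituting this expansion into $\int_{\alpha t}^{t/(4\pi)} W_{\eta, it}(4\pi y)^2\,\tfrac{dy}{y^2}$ and using $y \asymp t$ and $Q \asymp_\kappa 1$ throughout, the main term contributes
\[
  \mathcal{A}(t)^2 \int_{\alpha t}^{t/(4\pi)} \frac{Q(4\pi y)^{-1/2}}{y^2}\cos^2\Phi(4\pi y)\,dy = \frac{c(t)\,\mathcal{A}(t)^2}{t}, \qquad c(t) \asymp_\kappa 1,
\]
because as $y$ sweeps the window the phase $\Phi(4\pi y)$ increases monotonically through a range of length $\asymp_\kappa t \gg 1$, so $\cos^2\Phi$ averages to $\tfrac12$ up to an error $O_\kappa(t^{-2})$ (a single integration by parts on the $\cos 2\Phi$ part, using $\Phi' \asymp_\kappa 1$ and $\Phi'' \ll_\kappa t^{-1}$). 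The cross term and the $\mathcal{E}^2$ term are $\ll_\kappa \mathcal{A}(t)^2 t^{-2}$, by Cauchy--Schwarz and $\int \mathcal{E}^2\,\tfrac{dy}{y^2} \ll_\kappa t^{-3}$, hence negligible. Thus, for $t$ large,
\[
  \int_{\alpha t}^{\infty} W_{\eta, it}(4\pi y)^2\,\frac{dy}{y^2} \ge \int_{\alpha t}^{t/(4\pi)} W_{\eta, it}(4\pi y)^2\,\frac{dy}{y^2} \gg_\kappa \frac{\mathcal{A}(t)^2}{t} \asymp_\kappa t^{2\eta-1}e^{-\pi t} = t^{\pm\kappa-1}e^{-\pi t},
\]
which together with the trivial small-$t$ case proves the lemma. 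The one genuine obstacle is the claim $\mathcal{A}(t) \gg_\kappa t^{\eta}e^{-\pi t/2}$: a two-sided handle on the oscillation amplitude, with the correct exponential scale, requires following the turning-point connection inside the Liouville--Green approximation rather than merely bounding the Whittaker function --- this is exactly where the argument must go beyond Lemma~\ref{lem:uniform-Whittaker-bound}.
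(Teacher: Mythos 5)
Your route via the Liouville--Green normal form is genuinely different from the paper's. The paper avoids two-sided amplitude analysis entirely: it quotes the exact $L^2$ evaluation from~\cite[7.611(4)]{GradshteynRyzhik07},
\[
\int_0^\infty W_{\eta,it}(4\pi y)^2 \frac{dy}{y}
  = \frac{\pi}{\sin(2\pi i t)}\cdot\frac{\psi(\tfrac12-\eta+it)-\psi(\tfrac12-\eta-it)}{\Gamma(\tfrac12-\eta+it)\Gamma(\tfrac12-\eta-it)}
  = \pi t^{2\eta}e^{-\pi t}\bigl(1+O_\eta(t^{-1})\bigr),
\]
then removes the segment $y<\alpha t$ using only the \emph{upper} bound from Lemma~\ref{lem:uniform-Whittaker-bound}, leaving a lower bound for the $dy/y$ integral over $[\alpha t,\infty)$; finally it restricts to $[\alpha t,\beta t]$ (via an integration-by-parts tail estimate as in~\cite[Lemma~19.2]{DFI02}) and pulls out a factor $1/(\beta t)$ to pass to the $dy/y^2$ measure. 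This converts a lower-bound problem into an exact identity plus a one-sided estimate, which is structurally cheaper than what you attempt.

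The gap in your proposal is precisely the claim you flag as the heart of the matter: $\mathcal{A}(t)\gg_\kappa t^{\eta}e^{-\pi t/2}$. You motivate it by the connection formula $W_{\eta,it}=cM_{\eta,it}+\bar c\,M_{\eta,-it}$ with $\lvert c\rvert\asymp_\kappa t^{\eta-1/2}e^{-\pi t/2}$, but $\lvert c\rvert\cdot\lvert M_{\eta,\pm it}\rvert$ only yields the \emph{upper} bound for $\lvert W\rvert$; as written, nothing precludes the two terms from nearly cancelling on the window $u\asymp t$. To close the gap you would need to track the phase of $c$ and of the Liouville--Green oscillations of $M_{\eta,\pm it}$ through the turning-point connection, or find an independent nondegeneracy input. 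Dunster's uniform asymptotics are packaged as one-sided error bounds and do not furnish a lower bound on the amplitude directly; ``a recessive solution links to a nonvanishing oscillation'' is the right intuition, but it is a qualitative fact and you need a quantitative version at the correct $t$-scale. So the argument is incomplete exactly where you say it is hardest, and the paper's use of the exact Gradshteyn--Ryzhik identity is what lets it sidestep this amplitude lower bound altogether.
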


\begin{proof}
Let $\eta = \pm \frac{\kappa}{2}$. From~\cite[7.611(4)]{GradshteynRyzhik07} we derive
\begin{align*}
& \int_0^\infty W_{\eta, it}(4\pi y)^2 \frac{dy}{y}
= \frac{\pi}{\sin(2\pi i t)} \cdot \frac{\psi(\frac{1}{2}-\eta+it)-\psi(\frac{1}{2}-\eta-it)}{\Gamma(\frac{1}{2}-\eta+it)\Gamma(\frac{1}{2}-\eta-it)},
\end{align*}
in which $\psi(z)$ is the digamma function. We apply Stirling's approximation and the asymptotic $\psi(\frac{1}{2}-\eta+it)-\psi(\frac{1}{2}-\eta-it) = i \pi + O_\eta(1/t)$ to produce
\[\int_0^\infty W_{\eta, it}(4\pi y)^2 \frac{dy}{y} = \pi t^{2\eta} e^{-\pi t}(1+ O_\eta(t^{-1})).\]

To control the behavior of $W_{\eta,it}(4\pi y)$ near $y=0$, we apply Lemma~\ref{lem:uniform-Whittaker-bound} and integrate to obtain
\[
  \int_0^{\alpha t} W_{\eta,it}(4\pi y)^2 \frac{dy}{y} \ll_\eta \alpha t^{2\eta} e^{-\pi t},
\]
uniformly in $t \geq 1$ and $\alpha \leq 3/(8\pi)$. In particular, there exists a small constant $\alpha$ depending only on $\eta$ for which
\begin{align} \label{eq:whittaker-integral-single-y-bound}
\int_{\alpha t}^\infty W_{\eta, it}(4\pi y)^2 \frac{dy}{y} \gg_\eta t^{2\eta} e^{-\pi t}.
\end{align}

As in the proof of~\cite[Lemma 19.2]{DFI02}, integration by parts implies the existence of some $\beta$ depending only on $\eta$ for which~\eqref{eq:whittaker-integral-single-y-bound} holds when the domain of integration is restricted further to $\alpha t \leq y \leq \beta t$. Then
\begin{align*}
& \int_{\alpha t}^{\infty}  W_{\eta,it}(4\pi y)^2 \frac{dy}{y^2}
  \geq  \int_{\alpha t}^{\beta t}  W_{\eta,it}(4\pi y)^2 \frac{dy}{y^2} \\
& \qquad  \gg_\eta \vert t \vert^{-1} \int_{\alpha t}^{\beta t}
    W_{\eta,it}(4\pi y)^2 \frac{dy}{y}
 \gg_\eta t^{2\eta -1} e^{-\pi t},
\end{align*}
which completes the proof.
\end{proof}

We now return to the proof of Proposition~\ref{prop:m-average}.

\begin{proof}[Proof of Proposition~\ref{prop:m-average}]
Our proof adapts the proof of~\cite[Lemma 19.3]{DFI02}. Parseval's identity gives
\[
  \int_0^1 \vert \mu_j (z) \vert^2 dx
    = \sum_{n \neq 0} \vert \rho_j(n) \vert^2
        W_{\frac{\kappa n }{2 \vert n\vert}, it_j}(4\pi \vert n \vert y)^2.
\]
Since every orbit $\{\gamma z : z \in \Gamma_0(N)\}$ has $O_N(1+Y^{-1})$ points in $[0,1] \times (Y,\infty)$, integrating over $y \geq Y$ produces
\begin{align}
\begin{split} \label{eq:DFI-inequality}
1+\frac{1}{Y}
  & \gg_N \int_Y^\infty \int_0^1 \vert \mu_j (z) \vert^2 \frac{dx dy}{y^2} \\
&  = \sum_{n \neq 0} \vert \rho_j(n) \vert^2 \int_Y^\infty
    W_{\frac{\kappa  n }{2 \vert n \vert}, it_j}(4\pi \vert n \vert y)^2 \frac{dy}{y^2}.
\end{split}
\end{align}

Lemma~\ref{lem:strong-DFI-technical-lemma} and a change of variables implies that
\begin{align*}
  \int_{Y}^{\infty} W_{\frac{\kappa n}{2 \vert n \vert}, it_j}(4\pi \vert n \vert y)^2 \frac{dy}{y^2}
  \gg_\kappa \vert n \vert \cdot \vert t_j\vert^{\kappa \, \mathrm{sgn}(n)-1} e^{-\pi \vert t_j \vert},
\end{align*}
provided that $t_j$ is real with $\vert t_j  \vert \geq 1$ and $\vert n \vert Y \leq \alpha \vert t_j \vert$. We set $Y = \alpha \vert t_j \vert M^{-1}$ and deduce from~\eqref{eq:DFI-inequality} that
\[1+ \frac{M}{\alpha \vert t_j \vert}
	\gg_{\kappa,N} \sum_{\vert n \vert \leq M} \vert n \vert \cdot \vert \rho_{j}(n) \vert^2  \cdot \vert t_j \vert^{\kappa \, \mathrm{sgn}(n)-1} e^{-\pi \vert t_j \vert},
\]
which proves the proposition for large real $t_j$ after restricting to $\vert n \vert
\sim M$ and simplifying.

Otherwise, we assume that $t_j$ is real or purely imaginary, with $\vert t_j \vert \leq c_\kappa$ for some constant depending only on $\kappa$. By setting $Y = 1/(4\pi M)$ in~\eqref{eq:DFI-inequality} and changing variables $y \mapsto y/(4\pi \vert n \vert)$, it suffices to prove that
\begin{align} \label{eq:Whittaker-small-t_j-bound}
	\int_1^\infty W_{\frac{\kappa n}{2 \vert n \vert}, it_j}(y)^2 \frac{dy}{y^2} \gg_{\kappa} 1,
\end{align}
uniformly in real and purely imaginary $t_j$ with $\vert t_j \vert \leq c_\kappa$. To prove this, we note that left-hand side of~\eqref{eq:Whittaker-small-t_j-bound} is a continuous function of $t_j$ and thus attains a global minimum depending only on $\kappa$ (and $\mathrm{sgn}(n)$). Since the integrand is non-negative, this global minimum is non-negative since the Whittaker function is not identically zero.
\end{proof}

\section{An Average Involving Inner Products} \label{sec:D-estimate}

In addition to the coefficient bounds from section~\S{\ref{sec:fourier-coefficient-averages}}, we require estimates involving $\langle y^{\frac{k}{2}+\frac{1}{4}} f \overline{\theta}, \mu_j \rangle$;
specifically, we would like to estimate the sum
\begin{align} \label{eq:D-definition}
\mathfrak{D}:= \sum_{\vert t_j \vert \sim T} \vert \langle y^{\frac{k}{2}+\frac{1}{4}} f \overline{\theta}, \mu_j \rangle\vert^2 e^{\pi \vert t_j \vert}.
\end{align}

The main result in this section is the following bound for $\mathfrak{D}$.

\begin{theorem}\label{thm:D-estimate}
Fix $f \in S_k(\Gamma_0(N), \chi)$ of weight $k \geq 3$. For any $T>0$ and $\epsilon > 0$, we have
\[
	\mathfrak{D} =
		O_{f,\epsilon}\big(1+T^{k+\frac{3}{2} + \epsilon}\big).
\]
\end{theorem}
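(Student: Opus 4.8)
The plan is to bound $\mathfrak{D}$ by exploiting that $f$ is holomorphic, writing $f$ as a finite linear combination of holomorphic Poincaré series $P_m^{\mathrm{hol}}$ of weight $k$ and level $N$, so that by linearity it suffices to bound $\sum_{|t_j|\sim T} |\langle y^{k/2+1/4} P_m^{\mathrm{hol}} \overline{\theta}, \mu_j\rangle|^2 e^{\pi|t_j|}$ for a single $m$ (with an implied constant depending on $f$ through the finitely many coefficients of the Petersson expansion). For each such term I would unfold the holomorphic Poincaré series against $\overline{\theta}\,\overline{\mu_j}$. Concretely, $\langle y^{k/2+1/4} P_m^{\mathrm{hol}} \overline{\theta}, \mu_j\rangle$ unfolds to an integral over $\Gamma_\infty\backslash\mathbb{H}$ of $y^{k/2+1/4} e(mz)\,\overline{\theta(z)}\,\overline{\mu_j(z)}\,\frac{dx\,dy}{y^2}$; carrying out the $x$-integration against the Fourier expansions of $\theta$ and $\mu_j$ picks out, for each $\ell\geq 0$ with $r_1(\ell)\neq 0$, the Fourier coefficient $\rho_j(\ell+m)$ of $\mu_j$, leaving a $y$-integral of the shape $\int_0^\infty y^{k/2+1/4}e^{-2\pi m y}\,\overline{W_{\frac{\kappa}{2},it_j}(4\pi(\ell+m)y)}\,\frac{dy}{y}$, which evaluates via \cite[7.621(3)]{GradshteynRyzhik07} to an explicit ratio of gamma functions times a ${}_2F_1$, exactly as in the computation of $\Sigma_{\mathrm{disc}}$ in \S\ref{subsec:discrete-spectrum}. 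The upshot is
\[
\langle y^{k/2+1/4} P_m^{\mathrm{hol}} \overline{\theta}, \mu_j\rangle
= \sum_{\ell\geq 0} r_1(\ell)\,\overline{\rho_j(\ell+m)}\cdot G_{k,\kappa}(\ell+m, m; t_j),
\]
with $G_{k,\kappa}$ a ratio of gamma functions whose size I control by Stirling: one expects $|G_{k,\kappa}(n,m;t_j)| \ll_{k,m} (1+|t_j|)^{-\kappa/2 - (k - \text{something})} e^{-\pi|t_j|/2}$ type decay in $t_j$, together with decay in $n$ that makes the $\ell$-sum converge (effectively localizing to $n \ll 1 + |t_j|$ because $W_{\kappa/2,it_j}$ is essentially supported on $4\pi n y \lesssim |t_j|$ where the $y$-integral has mass).

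Next I would insert this into $\mathfrak{D}$, apply Cauchy--Schwarz in $\ell$ to separate $r_1(\ell)$ from $\rho_j(\ell+m)$, and interchange the $\ell$-sum with the spectral sum, reducing to estimating $\sum_{|t_j|\sim T} |\rho_j(n)|^2 \cdot (\text{weight depending on }t_j) \cdot e^{\pi|t_j|}$ for $n$ in a dyadic range, summed against the $n$-weights coming from $r_1$ and $G_{k,\kappa}$. Here the two coefficient averages from \S\ref{sec:fourier-coefficient-averages} do the work: Proposition~\ref{prop:spectral-average-appendix} handles the regime $n \gg T^4$ (where the spectral average costs $|n|^{1/2+\epsilon}/|n|$, i.e.\ a genuine saving in $n$), while for $n \ll T^4$ — which after the support restriction $n \ll 1+|t_j| \ll T$ is in fact the only relevant regime — Proposition~\ref{prop:spectral-average-appendix} gives $\sum_{|t_j|\sim T} \frac{|n|}{\cosh(\pi t_j)}|\rho_j(n)|^2 (1+|t_j|)^{\kappa\,\mathrm{sgn}(n)} \ll_{k,N,\epsilon} T^2 + |n|^{1/2+\epsilon} \ll T^2$. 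Converting $\cosh(\pi t_j)$ back to $e^{\pi|t_j|}$ costs nothing, and the accumulated powers of $(1+|t_j|) \sim T$ from $G_{k,\kappa}$ (after accounting for the $(1+|t_j|)^{\kappa\,\mathrm{sgn}(n)}$ already present in Proposition~\ref{prop:spectral-average-appendix} and the $\ell \sim n \sim T$ summation range weighted by $r_1$, whose partial sums over $\ell \leq T$ are $O(T^{1/2})$) should assemble to the claimed exponent $k + \tfrac{3}{2} + \epsilon$; the additive $1$ in the bound absorbs the trivial $T \lesssim 1$ range and the finitely many exceptional (Maass-lifted holomorphic) forms, which by the Extended Remark \S\ref{extended_remark} are no worse than the generic ones.

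The main obstacle I anticipate is the uniform control of the gamma/hypergeometric factor $G_{k,\kappa}(n,m;t_j)$ as $n$, $t_j$ both vary — in particular pinning down its effective support in $n$ and its exact power of $(1+|t_j|)$, since a careless bound here would lose factors of $T$ and miss the sharp exponent. This is precisely where the uniform Whittaker estimate Lemma~\ref{lem:uniform-Whittaker-bound} is useful: rather than manipulating the ${}_2F_1$ directly, one can go back to the integral $\int y^{k/2+1/4}e^{-2\pi m y}W_{\kappa/2,it_j}(4\pi n y)\frac{dy}{y}$, split at $y \asymp |t_j|/n$, use Lemma~\ref{lem:uniform-Whittaker-bound} on the bulk $0\leq y \leq \tfrac{3}{2}|t_j|/(4\pi n)$ and crude exponential decay of $W$ beyond the turning point, and thereby extract both the $n$-decay and the $|t_j|$-power cleanly. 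A secondary but routine point is verifying that the Petersson expansion of $f$ into holomorphic Poincaré series (valid for $k \geq 3$, after possibly grouping a finite Eisenstein/theta contribution when $k = 3$ — but $f$ is cuspidal, so this is fine) introduces only an $f$-dependent constant, which is why the final bound is $O_{f,\epsilon}(\cdot)$ rather than uniform in $f$.
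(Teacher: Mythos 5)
Your proposal follows the paper's high-level strategy — expand $f$ as a finite combination of holomorphic Poincaré series, unfold $\langle y^{k/2+1/4} P_m \overline{\theta}, \mu_j\rangle$, and bound the result using the two Fourier-coefficient averages — so the skeleton is right. But several of the quantitative steps that you sketch would fail, and the critical pieces of the paper's argument are missing.

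First, a sign/indexing issue that is more than cosmetic: the $x$-integral gives $n_1 + n_2 = m$, so the extracted coefficient is $\rho_j(m - \ell)$, not $\rho_j(\ell + m)$. Since $m \ll_{k,N} 1$ by Sturm, essentially all nonzero terms have \emph{negative} argument. This changes the Whittaker index from $\tfrac{\kappa}{2}$ to $-\tfrac{\kappa}{2}$ and flips the $(1+|t_j|)^{\kappa\,\mathrm{sgn}(n)}$ factor in Proposition~\ref{prop:spectral-average-appendix} to $(1+|t_j|)^{-\kappa}$; the whole subsequent bookkeeping depends on this sign.

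Second, your proposed alternative of bounding the $y$-integral by applying Lemma~\ref{lem:uniform-Whittaker-bound} pointwise cannot give the right exponential. After the change of variable $u = 4\pi|n_2|y$, the integral
$\int_0^\infty u^{k/2-7/4} e^{-pu} W_{-\kappa/2,it_j}(u)\,du$
is evaluated exactly by~\cite[7.621(3)]{GradshteynRyzhik07} and has size $\asymp |t_j|^{\kappa-1}e^{-\pi |t_j|}$ (times the ${}_2F_1$ factor). A sup-norm bound from Lemma~\ref{lem:uniform-Whittaker-bound} instead gives only $|t_j|^{-\kappa/2-1/2}e^{-\pi|t_j|/2}$, which is \emph{larger by a factor $\asymp |t_j|^{-3\kappa/2+1/2}e^{\pi|t_j|/2}$}, because the pointwise bound does not see the oscillation of $W_{-\kappa/2,it_j}$. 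That lost factor of $e^{\pi|t_j|/2}$ is fatal: once multiplied by the weight $e^{\pi|t_j|}$ in $\mathfrak{D}$ and by $|\rho_j|^2 \sim e^{\pi|t_j|}$, the resulting sum would diverge. Recovering the correct exponential requires either stationary phase on the $u$-integral or, as the paper does, passing through the Mellin--Barnes representation and Stirling — Lemma~\ref{lem:uniform-Whittaker-bound} alone is not enough.

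Third, the claimed localization "$n \ll 1+|t_j|$'' is not literally true and cannot substitute for the paper's actual argument. The $n$-dependence of the gamma/hypergeometric factor is the \emph{polynomial} decay $|n_2|^{\Re w - k/2 + 3/4}$ (for any $\Re w \in (0, \kappa/2 - 1/4]$), not a hard cutoff at $n \sim T$. There is a genuine transition near $n \sim T$ (reflected in the optimal choice of $\Re w$ being different below and above that scale), but terms with $n \gg T$ contribute and must be summed. The paper handles this by splitting at $n^2 - m \sim M = M(T)$ and treating the head and tail with two \emph{different} tools: Proposition~\ref{prop:spectral-average-appendix} on each dyadic block of the head, and Proposition~\ref{prop:m-average} (the $m$-average, pointwise in $j$, then Weyl law) on the tail, all followed by a joint optimization over $\Re w$ and $M$. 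Your proposal uses only the spectral average and never invokes Proposition~\ref{prop:m-average}, and it explicitly declares the regime $n \ll T$ to be "the only relevant" one — the ensuing "should assemble to $k+\tfrac{3}{2}+\epsilon$'' is precisely the step the paper spends its effort on and your sketch omits.

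In short: same decomposition and same key lemmas identified, but the gamma-factor bound you propose is exponentially too weak, the localization heuristic is not rigorous, and the $\Re w$/cutoff optimization plus the role of the $m$-average for the tail — which is where the exponent $k+\tfrac{3}{2}$ actually comes from — are absent.
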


To prove Theorem~\ref{thm:D-estimate}, we represent $f(z)$ as a finite sum of holomorphic Poincar\'e
series, which we use for unfolding. The resulting objects are then understood using
Propositions~\ref{prop:spectral-average-appendix} and~\ref{prop:m-average}.

\subsection{An upper bound for \texorpdfstring{$\langle y^{\frac{k}{2}+\frac{1}{4}} f \overline{\theta}, \mu_j \rangle$}{an inner product}}

The cusp space $S_k(\Gamma_0(N), \chi)$ is finite-dimensional and spanned by Poincar\'e series $\{P_m\}_{m \geq 1}$ of the form
\[
  P_m(z) = \sum_{\gamma \in \Gamma_\infty \backslash \Gamma_0(N)}
  \overline{\chi(\gamma)} j(\gamma,z)^{-k} e(m \gamma z),
\]
in which $j(\gamma,z)$ is the usual $j$-invariant. The Sturm bound (\cite{sturm87}, or
see~\cite[Corollary~9.19]{Stein07} for more direct exposition) implies that our spanning set may restrict to
$m \ll_{k,N} 1$.

Consider the inner product
\begin{align} \label{eq:Poincare-basis-unfolding}
& \langle y^{\frac{k}{2}+\frac{1}{4}} P_m \overline{\theta}, \mu_j \rangle
=   \int_0^\infty \int_0^1
  \Im(z)^{\frac{k}{2}+\frac{1}{4}}
        \overline{\theta(z) \mu_j(z)} e(mz) \frac{dxdy}{y^2} \\
  &\qquad = \sum_{m=n_1+n_2} r_1(n_1) \overline{\rho_j(n_2)} \int_0^\infty
    y^{\frac{k}{2}-\frac{3}{4}} e^{-2\pi (n_1+m)y} W_{\frac{n_2 \kappa}{2 \vert n_2 \vert},it_j} (4\pi \vert n_2 \vert y) \frac{dy}{y}.
\end{align}
Let $G(n_1,n_2,m)$ denote the final integral above.  By~\cite[13.23.4]{DLMF}, $G$ may be written in terms of the ${}_2F_1$-hypergeometric function; using the Mellin--Barnes integral~\cite[15.6.6]{DLMF}, this implies
\[
G
  = \begin{cases}
  \displaystyle
  \frac{1}{2\pi i} \int_{(\Re w)}
    \frac{\Gamma(\frac{\kappa}{2} + it_j - w) \Gamma(\frac{\kappa}{2} -it_j -w)\Gamma(w)}
        {(4\pi n_2)^{\frac{k}{2}-\frac{3}{4}} \Gamma(\frac{1}{2}-w)}
        \Big(\frac{n_2}{n_1} \Big)^{w}  dw,
          &  n_2 > 0; \\[1.5em]
  \displaystyle
  \frac{1}{2\pi i} \int_{(\Re w)}
    \frac{\Gamma(\frac{\kappa}{2} + it_j - w) \Gamma(\frac{\kappa}{2} -it_j -w)\Gamma(w)}
        {(4\pi \vert n_2\vert )^{\frac{k}{2}-\frac{3}{4}} \Gamma(k-w)}
        \Big(\frac{\vert n_2 \vert}{m} \Big)^{w}  dw  ,
          & n_2 < 0,
    \end{cases}
\]
for any $\Re w \in (0, \frac{\kappa}{2} - \vert \Im t_j \vert)$.
The integrand decays exponentially outside of $\vert \Im w \vert \leq \vert t_j \vert$ by Stirling's
approximation.
In particular, $G(n_1,n_2,m) \ll_\kappa \vert n_2 \vert^{\Re w-
\frac{k}{2}-\frac{3}{4}}$ for $\vert t_j \vert \ll_\kappa 1$, which includes the Maass lifted holomorphic spectrum as well as any
potential exceptional eigenvalues $it_j \in \mathbb{R}$. The case $\vert t_j \vert \ll_\kappa 1$ therefore
gives $\langle y^{\frac{k}{2}+\frac{1}{4}} P_m \overline{\theta}, \mu_j \rangle
=O_{k,N}(1)$ by Proposition~\ref{prop:m-average} and dyadic subdivision (with
$\Re w$ near enough to $0$ to guarantee convergence of the sum).

Otherwise, for sufficiently large $c_\kappa$ and $\vert t_j \vert \geq c_\kappa$, Stirling gives the estimate
\begin{align*}
   \ll_{m,\kappa, \Re w} \vert \Im w - t_j \vert^{\frac{\kappa}{2} -\Re w -\frac{1}{2}}
  & \vert \Im w + t_j \vert^{\frac{\kappa}{2} -\Re w -\frac{1}{2}} \\
   \times & \vert \Im w \vert^{2\Re w - \frac{1}{2} - \kappa \, \delta_{[n_2 <0]}}
  e^{-\pi \vert t_j \vert} \vert n_2 \vert^{\Re w - \frac{k}{2}+\frac{3}{4}}
\end{align*}
for the integrand of $G(n_1,n_2,m)$ on the interval $\vert \Im w \vert \leq \vert t_j \vert$.
In the case $n_2 > 0$, integrating gives $G(n_1,n_2,m) \ll_{k,N} \vert t_j \vert^{\kappa-\frac{1}{2}} e^{-\pi \vert t_j \vert}$ for any choice of $\Re w$ (in part because $n_2 \leq m \ll_{k,N} 1$). For $n_2 < 0$, we have instead
\[
  G(n_1,n_2,m)
  \ll_{k,N,\Re w} \big( \vert t_j \vert^{-\frac{1}{2}} + \vert t_j \vert ^{\kappa - 2 \Re w -1} \big)
    e^{-\pi \vert t_j \vert} \vert n_2 \vert^{\Re w - \frac{k}{2} + \frac{3}{4}},
\]
valid for any $\Re w \in (0, \frac{\kappa}{2})$.
When $k \geq 3$ (as in Theorem~\ref{thm:D-estimate}), however, there is no
benefit in taking $\Re w$ outside $\Re w \in (0,\frac{\kappa}{2}-\frac{1}{4}]$.
We conclude that
\begin{align} \label{eq:inner-product-bound}
  \langle y^{\frac{k}{2}+\frac{1}{4}} P_m \overline{\theta}, \mu_j \rangle
  & \ll_{k,N,\Re w} \vert t_j \vert^{\kappa-\frac{1}{2}} e^{-\pi \vert t_j \vert}
      \sum_{0 \leq n < \sqrt{m}} \vert \rho_j(m-n^2) \vert  \\ \nonumber
  & \qquad +  \vert t_j \vert^{\kappa - 2 \Re w -1}
   e^{-\pi \vert t_j \vert} \!
    \sum_{n> \sqrt{m}} \frac{\vert \rho_j(m-n^2) \vert}{\vert m-n^2 \vert^{\frac{k}{2}-\frac{3}{4}- \Re w}}
\end{align}
when $\vert t_j \vert \geq c_\kappa$, for any $\Re w \in (0, \frac{\kappa}{2}-\frac{1}{4}] = (0,\frac{k}{2}-\frac{1}{2}]$.

\begin{remark} \label{rem:Maass-lift-poincare-example}
The computations involving $P_m(z)$ in this section can be used to provide explicit examples in which $\Sigma_{\mathrm{hol}}(s)$ admits a pole at $s=\frac{3}{4}$. For a concrete example, consider the Poincar\'e series $P_m(z)$ of (odd) weight $k$, level $N=576$, and character $\chi = (\frac{12}{\cdot})$. As noted in~\cite[\S{2.2}]{SerreStark77}, the space $S_{\frac{1}{2}}(\Gamma_0(576),\chi)$ is one-dimensional and spanned by $\theta_{\chi,1}(z)$. Let $\Theta_{\chi,1}(z)$ denote the Maass lift of $\theta_{\chi,1}$ to weight $\kappa$, scaled to have Fourier expansion
\begin{align} \label{eq:level-576-cuspform}
	\Theta_{\chi,1}(z) = \sum_{n \in \mathbb{Z}} \frac{\chi(n)}{(4\pi n^2)^{\frac{1}{4}}} W_{\frac{\kappa}{2},\frac{1}{4}}(4\pi n^2 y) e(n^2 x).
\end{align}
As in~\eqref{eq:Poincare-basis-unfolding}, we find that
\begin{align*}
& \langle y^{\frac{k}{2}+\frac{1}{4}} P_m \overline{\theta}, \Theta_{\chi ,1} \rangle
	= \int_0^\infty \int_0^1
		\Im(z)^{\frac{k}{2}+\frac{1}{4}} \overline{\theta(z) \Theta_{\chi,1}(z)} e(mz) \frac{dx dy}{y^2} \\
	&\qquad = \sum_{m=n_1+n_2^2} \frac{r_1(n_1) \overline{\chi(n_2)}}{(4\pi n_2^2)^{\frac{1}{4}}}
		\int_0^\infty
    y^{\frac{k}{2}-\frac{3}{4}} e^{-2\pi (n_1+m)y} W_{\frac{\kappa}{2},\frac{1}{4}} (4\pi n_2^2 y) \frac{dy}{y}.
\end{align*}

In the case $m=2$, the $m$-sum reduces to the case $n_1 = n_2^2 = 1$ and the Mellin--Barnes integral can be evaluated explicitly due to simplification in the hypergeometric functions in~\cite[13.23.4]{DLMF}. We conclude that
\[
	\langle y^{\frac{k}{2}+\frac{1}{4}} P_2 \overline{\theta}, \Theta_{\chi ,1} \rangle
		= 2^{\frac{11}{2}-\frac{5k}{2}} \pi^{\frac{1}{2}-\frac{k}{2}} \Gamma(k-1) \sin\Big(\frac{\pi}{4}(k+1)\Big),
\]
which is non-zero for $k \equiv 1 \bmod 4$.
In particular, $\Sigma_{\mathrm{hol}}$ has a non-zero pole at $s=\frac{3}{4}$ whenever the $h$-th Fourier coefficient of $\Theta_{\chi,1}$ is non-zero. This occurs, for example, in the case $h=1$.

By writing $P_2(z)$ as a linear combination of eigenforms in $S_k(\Gamma_0(576),\chi)$, we deduce the existence of some eigenform $f(z)$ for which $\Sigma_{\mathrm{hol}}(s)$ has a non-zero pole at $s =\frac{3}{4}$. We conjecture that such forms must be dihedral.
\end{remark}

\subsection{An upper bound for \texorpdfstring{$\mathfrak{D}$}{D}}

When $f(z) = P_m(z)$ and $T \ll_\kappa 1$, the estimate $\langle y^{\frac{k}{2}+\frac{1}{4}} P_m \overline{\theta}, \mu_j \rangle =O_{k,N}(1)$ implies that the spectral sum $\mathfrak{D}$ introduced in~\eqref{eq:D-definition} satisfies $\mathfrak{D} \ll_{k,N} 1$. Otherwise, for $f(z) = P_m(z)$ and $T \geq c_\kappa$, the inequality~\eqref{eq:inner-product-bound} implies that
\begin{align*}
 & \mathfrak{D} \ll_{k,N,\Re w} T^{2\kappa-1}
    \sum_{\vert t_j \vert \sim T} \bigg(
      \sum_{n < \sqrt{m}} \vert \rho_j(m-n^2) \vert \bigg)^2 e^{-\pi \vert t_j \vert} \\
 & \qquad \qquad \quad +  T^{2\kappa-4\Re w -2}
      \sum_{\vert t_j \vert \sim T} \bigg(\sum_{n> \sqrt{m}}
          \frac{\vert \rho_j(m-n^2) \vert}{\vert m-n^2 \vert^{\frac{k}{2}-\frac{3}{4}-\Re w}} \bigg)^2
      e^{-\pi \vert t_j \vert},
\end{align*}
where we omit the non-dominant cross-term.

By Cauchy--Schwarz and Proposition~\ref{prop:spectral-average-appendix}, the contribution from $n< \sqrt{m}$ satisfies the bound
\begin{align*}
&  T^{2\kappa -1} \sqrt{m} \sum_{n < \sqrt{m}}
  \sum_{\vert t_j \vert \sim T}\vert \rho_j(m-n^2) \vert^2 e^{-\pi \vert t_j \vert} \\
&\qquad \ll_\epsilon T^{2\kappa -1} \sqrt{m} \sum_{n < \sqrt{m}}
  \Big(\frac{T^{2-\kappa}}{m-n^2} + T^{-\kappa} (m-n^2)^{-\frac{1}{2}+\epsilon}\Big)
  \ll_{m,\kappa} T^{\kappa+1},
\end{align*}
which is admissible in Theorem~\ref{thm:D-estimate} since $m \ll_{k,N} 1$.

For the terms with $n> \sqrt{m}$, we split the sum at an unspecified $n$ for which $n^2 - m \sim M = M(T)$.
In the head $\mathfrak{D}_{\mathrm{head}}$ corresponding to $n^2 - m < M$, a worst-case bound over dyadic subintervals gives some $M_0 <M$ for which
\begin{align*}
\mathfrak{D}_{\mathrm{head}} & \ll
  T^{2\kappa-4\Re w-2} (\log M)^2 \sum_{\vert t_j \vert \sim T}
  \Big( \sum_{n^2-m \sim M_0} \frac{\vert \rho_j(m-n^2) \vert}{\vert m-n^2 \vert^{\frac{k}{2}-\frac{3}{4}-\Re w}} \Big)^2
   e^{-\pi \vert t_j \vert} \\
  & \ll T^{2\kappa-4\Re w-2} M^\epsilon M_0^{-k+2+2\Re w} \sum_{\vert t_j \vert \sim T}
  \Big(\sum_{n^2-m \sim M_0}  \frac{\vert \rho_j(m-n^2) \vert^2}{\cosh(\pi t_j)} \Big) \\
  & \ll T^{2\kappa-4\Re w-2} M^\epsilon M_0^{-k+2+2\Re w} \!\!\! \sum_{n^2-m \sim M_0} \!\!\!
        \Big( T^{2+\kappa} M_0^{-1} + T^{\kappa}M_0^{-\frac{1}{2}+\epsilon}\Big) \\
  & \ll T^{3\kappa-4\Re w-2} M^\epsilon M_0^{2\Re w-k+\frac{3}{2}}
    \big( T^2 +  M_0^{\frac{1}{2}+\epsilon}\big),
\end{align*}
in which we've applied Proposition~\ref{prop:spectral-average-appendix}. Here and for the rest of this section, all implicit constants may depend on $k$, $N$, $\epsilon$, and $\Re w$ (where that appears). Note that $M_0$ depends on $M$, $T$, and $\Re w$. To remove $M_0$ and $\Re w$ and produce a bound which depends only on $M$ and $T$, we vary $\Re w$ and find a worst-case $M_0$ in each case.
\begin{enumerate}
\item[a.] For $\Re w \leq \frac{k}{2}-1$, all $M_0$-powers are non-positive and the worst-case $M_0$ is $M_0 = 1$. We find $\mathfrak{D}_{\mathrm{head}} \ll T^{3\kappa -4 \Re w} M^\epsilon$. We optimize with $\Re w = \frac{k}{2} -1$ to produce $\mathfrak{D}_{\mathrm{head}} \ll T^{k+\frac{5}{2}} M^\epsilon$.
\item[b.] For $\Re w \geq \frac{k}{2}-\frac{3}{4}$, all $M_0$-powers are non-negative, so the worst-case $M_0$ is $M$ and so $\mathfrak{D}_{\mathrm{head}} \ll T^{3\kappa -4\Re w -2 +\epsilon} M^{2\Re w -k + \frac{3}{2} +2\epsilon} (T^2+ M^{\frac{1}{2}})$. If $M \gg T^2$, we benefit from taking $\Re w$ small; with $\Re w = \frac{k}{2}-\frac{3}{4}$, we find $\mathfrak{D}_{\mathrm{head}} \ll T^{k-\frac{1}{2}+\epsilon} M^{2\epsilon} (T^2 + M^{\frac{1}{2}})$. Conversely, if $M \ll T^2$, we benefit from $\Re w = \frac{k}{2}-\frac{1}{2}$, to produce $\mathfrak{D}_{\mathrm{head}} \ll T^{k+\frac{1}{2}+\epsilon} M^{\frac{1}{2}+2\epsilon}$.
\item[c.] For $\Re w \in [\frac{k}{2}-1, \frac{k}{2}-\frac{3}{4}]$, the
$M_0$-powers have mixed sign. A general upper bound is
$\mathfrak{D}_{\mathrm{head}} \ll T^{3\kappa -4 \Re w -2+\epsilon}
M^{2\epsilon} (T^2+M^{2\Re w -k + 2})$. When $T^2 \gg M^{2\Re w -k + 2}$, we benefit from taking $\Re w$ large and optimize with $\Re w = \min(\frac{k}{2}-\frac{3}{4},\frac{k}{2}-1 + \log_M T)$. This produces $\mathfrak{D}_{\mathrm{head}} \ll T^{\frac{5}{2}+k-4 \log_M T +\epsilon} M^{2\epsilon}$ for $M \gg T^4$ and $\mathfrak{D}_{\mathrm{head}} \ll T^{k+\frac{3}{2}+\epsilon} M^{2\epsilon}$ for $M \ll T^4$.
Conversely, if $T^2 \ll M^{2 \Re w - k+2}$, we must have $T \ll M^{\frac{1}{2}}$, which incentivizes $\Re w$ as small as possible, i.e.\ $\Re w = \frac{k}{2}-1 + \log_M T$. We find $\mathfrak{D}_{\mathrm{head}} \ll T^{\frac{5}{2}+k-4 \log_M T +\epsilon} M^{2\epsilon}$ as before.
\end{enumerate}
We conclude that $\mathfrak{D}_{\mathrm{head}}$ is $O(T^{k+\frac{1}{2}+\epsilon} M^{\frac{1}{2}+2\epsilon})$ when $M \ll T^2$, $O(T^{k+\frac{3}{2}+\epsilon} M^{2\epsilon})$ when $T^2 \ll M \ll T^4$, and $O(T^{k+\frac{5}{2}-4\log_M T + \epsilon} M^{2\epsilon})$ when $T^4 \ll M$.

We now consider the tail $\mathfrak{D}_{\mathrm{tail}}$ in which $n^2-m > M$. By Cauchy--Schwarz and Proposition~\ref{prop:m-average}, we have
\begin{align*}
& \sum_{n^2 -m \sim M} \frac{\vert \rho_j(m-n^2) \vert}{\vert m-n^2 \vert^{\frac{k}{2}-\frac{3}{4}-\Re w}}
  \ll M^{\Re w-\frac{k}{2}+\frac{3}{4}} \Big(\sum_{n^2-m \sim M} \vert \rho_j(m-n^2) \vert^2 \Big)^{\frac{1}{2}} \cdot M^{\frac{1}{4}} \\
& \,\, \ll M^{\Re w - \frac{k}{2}+1} \Big(\sum_{\ell \sim M} \vert \rho_j(-\ell) \vert^2 \Big)^{\frac{1}{2}} \!\!
  \ll M^{\Re w - \frac{k}{2}+1} (1+\vert t_j \vert)^{\frac{\kappa}{2}}
    \Big(1+\frac{\vert t_j \vert^{\frac{1}{2}}}{M^{\frac{1}{2}}}\Big) e^{\frac{\pi}{2} \vert t_j \vert}.
\end{align*}
The same result holds for the sum over all $n^2 -m > M$ by dyadic summation, provided $\Re w <\frac{k}{2}-1$. (Note that choice of $\Re w$ here is unrelated to our earlier choice of $\Re w$ in $\mathfrak{D}_{\mathrm{head}}$.) We now compute
\begin{align}
\mathfrak{D}_{\mathrm{tail}}
  & \ll T^{2\kappa-4\Re w -2} \sum_{\vert t_j \vert \sim T} \Big( \sum_{n^2-m > M}
    \frac{\vert \rho_j(m-n^2) \vert}{\vert m-n^2 \vert^{\frac{k}{2}-\frac{3}{4}-\Re w}}
   \Big)^2 e^{-\pi \vert t_j \vert} \\
  & \ll T^{2\kappa-4\Re w -2} \sum_{\vert t_j \vert \sim T}
      M^{2\Re w - k +2} T^\kappa \Big(1 + \frac{T}{M}\Big) \\
  & \ll T^{3\kappa-4\Re w} M^{2\Re w - k +2}  \Big(1 + \frac{T}{M}\Big), \label{eq:D_tail-bound}
\end{align}
by applying Proposition~\ref{prop:m-average} and the Weyl law.

Finally, we determine bounds for $\mathfrak{D}$. In the regime $1 \ll M \ll
T^2$, we are led by~\eqref{eq:D_tail-bound} to take $\Re w$ large; with $\Re w = \frac{k}{2}-1-\epsilon$, we produce
\[\mathfrak{D} \ll \mathfrak{D}_{\mathrm{head}} + \mathfrak{D}_{\mathrm{tail}}
 \ll T^{k+\frac{1}{2}+\epsilon} M^{\frac{1}{2}+2\epsilon}
  + T^{k+\frac{5}{2}+4\epsilon} M^{-2\epsilon} (1 + T/M),
\]
which is optimized at $M=T^2$ to produce $\mathfrak{D} \ll T^{k+\frac{5}{2}+\epsilon}$. Conversely, for $M \gg T^2$,~\eqref{eq:D_tail-bound} incentivizes $\Re w$ small; with $\Re w = \epsilon$, we produce
\[
  \mathfrak{D} \ll
		\begin{cases}
			T^{k+\frac{3}{2}+\epsilon} M^{2\epsilon}
				+ T^{3\kappa -4\epsilon} M^{2\epsilon -k +2}, & \quad T^2 \ll M \ll T^4, \\
			T^{k+\frac{5}{2}-4\log_M T + \epsilon} M^{2\epsilon}
				+ T^{3\kappa -4\epsilon} M^{2\epsilon -k +2}, & \quad M \gg T^4.
		\end{cases}
\]
As $k \geq 3$, we optimize with $M = T^{\frac{2k-3}{k-2}}$ in the case
$T^2 \ll M \ll T^4$ to produce $\mathfrak{D} \ll T^{k+\frac{3}{2} + \epsilon}$.
(The case $M \gg T^4$ is optimized with $M=T^4$ and does not improve this
estimate.)

This completes the proof of Theorem~\ref{thm:D-estimate} in the case $f=P_m$,
and the extension to general $f$ is straightforward.

Combining Theorem~\ref{thm:D-estimate} with Proposition~\ref{prop:spectral-average-appendix} via
Cauchy--Schwarz, we produce a useful average involving
$\rho_j(h) \langle y^{\frac{k}{2}+\frac{1}{4}} f \overline{\theta}, \mu_j \rangle$.

\begin{corollary} \label{cor:product-average} Fix $h> 0$ and $k \geq 3$. For any $T>0$ and $\epsilon > 0$, we have
\[\sum_{\vert t_j \vert \sim T} \vert \rho_j(h) \langle y^{\frac{k}{2}+\frac{1}{4}} f \overline{\theta}, \mu_j \rangle \vert
	= O_{f,\epsilon}\big(1+T^{2 +\epsilon}\big).
\]
\end{corollary}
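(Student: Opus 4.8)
The plan is to combine the two spectral second moments we already have --- Proposition~\ref{prop:spectral-average-appendix} (an $n$-fixed average of $|\rho_j(n)|^2$) and Theorem~\ref{thm:D-estimate} (an average of $|\langle y^{k/2+1/4}f\overline\theta,\mu_j\rangle|^2$) --- by a single application of Cauchy--Schwarz, choosing the splitting weight so that the exponential factors $e^{\pi|t_j|}$ and $1/\cosh(\pi t_j)$ appearing in the two inputs match up.

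First I would dispose of small $T$. By Weyl's law, $\#\{j:|t_j|\le T\}\ll_N(1+T)^2$, so for $T\ll_k 1$ the sum runs over $O_{k,N}(1)$ forms $\mu_j$ --- this range absorbs any exceptional eigenvalues and the finitely many Maass lifts of holomorphic forms, where $t_j$ may be purely imaginary and Proposition~\ref{prop:spectral-average-appendix} does not literally apply. For these finitely many fixed forms one has $|\rho_j(h)|=O_{k,N,h}(1)$ and $|\langle y^{k/2+1/4}f\overline\theta,\mu_j\rangle|=O_{f}(1)$ (the latter from Theorem~\ref{thm:D-estimate} with $T\asymp1$), so their total contribution is $O_{f,h}(1)$. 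Hence I may assume $T\ge1$, in which case every $t_j$ with $|t_j|\sim T$ is real and satisfies $\cosh(\pi t_j)\asymp e^{\pi|t_j|}$ and $(1+|t_j|)^\kappa\asymp T^\kappa$.

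Now set $w_j:=\dfrac{h\,(1+|t_j|)^\kappa}{\cosh(\pi t_j)}$ and write each summand as $\bigl(|\rho_j(h)|\,w_j^{1/2}\bigr)\cdot\bigl(|\langle y^{k/2+1/4}f\overline\theta,\mu_j\rangle|\,w_j^{-1/2}\bigr)$. By Cauchy--Schwarz the sum is at most $S_1^{1/2}S_2^{1/2}$, where $S_1=\sum_{|t_j|\sim T}|\rho_j(h)|^2 w_j\ll T^2+h^{1/2+\epsilon}$ by Proposition~\ref{prop:spectral-average-appendix} with $n=h$, and
\[
S_2=\sum_{|t_j|\sim T}\frac{\cosh(\pi t_j)}{h\,(1+|t_j|)^\kappa}\,|\langle y^{k/2+1/4}f\overline\theta,\mu_j\rangle|^2\asymp\frac{1}{hT^\kappa}\sum_{|t_j|\sim T}|\langle y^{k/2+1/4}f\overline\theta,\mu_j\rangle|^2 e^{\pi|t_j|}\ll\frac{1+T^{k+\frac32+\epsilon}}{hT^{k-\frac12}}\ll T^{2+\epsilon}
\]
for $T\ge1$, using $\kappa=k-\frac12$ and $h\ge1$. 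Therefore
\[
\sum_{|t_j|\sim T}|\rho_j(h)\,\langle y^{k/2+1/4}f\overline\theta,\mu_j\rangle|\ll\bigl(T^2+h^{1/2+\epsilon}\bigr)^{1/2}T^{1+\epsilon}\ll T^{2+\epsilon}+h^{1/4+\epsilon}T^{1+\epsilon},
\]
and since $h$ is fixed, AM--GM gives $h^{1/4+\epsilon}T^{1+\epsilon}\ll_h 1+T^{2+\epsilon}$, which combined with the $T\ll_k1$ case yields the corollary.

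The only real care needed is bookkeeping at the boundary of the argument: confirming that the part of the spectrum outside the hypotheses of Proposition~\ref{prop:spectral-average-appendix} (purely imaginary $t_j$) is genuinely a finite, $O_{k,N}(1)$-sized set confined to $|t_j|\ll_k1$, and tracking that the power $T^{k+3/2+\epsilon}$ from Theorem~\ref{thm:D-estimate} divided by $T^\kappa=T^{k-1/2}$ collapses to exactly $T^{2+\epsilon}$. No input beyond Proposition~\ref{prop:spectral-average-appendix} and Theorem~\ref{thm:D-estimate} is required.
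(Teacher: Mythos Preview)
Your proof is correct and follows exactly the approach the paper indicates: the paper simply writes that the corollary follows by ``combining Theorem~\ref{thm:D-estimate} with Proposition~\ref{prop:spectral-average-appendix} via Cauchy--Schwarz,'' and your weighted Cauchy--Schwarz with $w_j = h(1+|t_j|)^\kappa/\cosh(\pi t_j)$ is precisely the intended implementation, with the exponent collapse $T^{k+3/2+\epsilon}/T^{\kappa}=T^{2+\epsilon}$ working out as you noted. Your separate handling of the bounded range $|t_j|\ll_k 1$ (to cover exceptional and holomorphic-lift eigenvalues, where $t_j$ may fail to be real) is a careful detail the paper leaves implicit.
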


\begin{remark} \label{rem:discrete-spectrum-conjecture}
Theorem~\ref{thm:D-estimate} and Corollary~\ref{cor:product-average} are not sharp. Assuming the generalized Lindel\"{o}f hypothesis and Ramanujan--Petersson conjecture, Lemma~\ref{lem:Duke-bound} implies that $\rho_j(-\ell) \ll \vert t_j \vert^{\frac{\kappa}{2}+\epsilon} e^{\frac{\pi}{2} \vert t_j \vert} \vert \ell \vert^{-\frac{1}{2}+\epsilon}$ for all $\epsilon > 0$. Stationary phase confirms that our estimate for $G(n_1,n_2,m)$ is relatively sharp and suggests that the absolute values in~\eqref{eq:inner-product-bound} can be moved outside the sum $n> \sqrt{m}$.  If the resulting sum demonstrates square-root cancellation, we would have
\[
  \langle y^{\frac{k}{2}+\frac{1}{4}} f \overline{\theta}, \mu_j \rangle
  \ll_{k,N,\epsilon} \vert t_j \vert^{\frac{k}{2}-\frac{3}{4}+\epsilon} e^{-\frac{\pi}{2} \vert t_j \vert}
\]
by combining Lemma~\ref{lem:Duke-bound} with~\eqref{eq:inner-product-bound} in the case $\Re w = \frac{k}{2}-\frac{1}{2}-\epsilon$.  This would imply that $\mathfrak{D} \ll_{f,\epsilon} T^{k+\frac{1}{2}+\epsilon}$ and that the sum in Corollary~\ref{cor:product-average} is $O_{f,\epsilon}(T^{\frac{3}{2}+\epsilon})$.
\end{remark}

\section{Sharp Cutoff Result} \label{sec:sharp-cutoff}

In this section, we apply Perron's formula (cf.~\cite[Lemma 3.12]{Titchmarsh86}) to $D_h(s)$ to study the partial sums of $A(n^2+h)$. By the definition of $D_h(s)$ from~\eqref{eq:D_h-definition} and Perron's formula, we have
\begin{align} \label{eq:perron}
\sum_{m^2 +h \leq X^2} \!\!\!  A(m^2+h)
  &= \frac{1}{4\pi i} \int_{1+\epsilon -iT}^{1+\epsilon+iT} \!
    D_h(\tfrac{s}{2}+\tfrac{1}{4}) X^s \frac{ds}{s}
  + O\bigg(\frac{X^{1+\epsilon}}{T}\bigg)
\end{align}
for fixed $\epsilon > 0$ and any $T>1$.

To understand the integral, we replace $D_h(s)$ with its spectral expansion
$\Sigma_{\mathrm{disc}}(s) + \Sigma_{\mathrm{res}}(s) + \Sigma_{\mathrm{cont}}(s)$ and shift the
line of integration. To justify this shift, we must quantify the growth of
$\Sigma_{\mathrm{disc}}$, $\Sigma_{\mathrm{res}}$, and $\Sigma_{\mathrm{cont}}$ in vertical strips.

\subsection{Growth of \texorpdfstring{$\Sigma_{\mathrm{disc}}$}{the discrete spectrum}} \label{subsec:discrete-spectrum-growth}

Recall from~\eqref{eq:discrete-spectrum} that the discrete spectral component of $D_h(s)$ equals
\begin{align*}
\Sigma_{\mathrm{disc}} :=
\frac{(4\pi)^{\frac{k}{2}+\frac{1}{4}}}{h^{s-1}}
\sum_j  \frac{\Gamma(s-\frac{1}{2}+it_j)\Gamma(s-\frac{1}{2}-it_j)}
       {\Gamma(s-\frac{k}{2}+\frac{1}{4})\Gamma(s+\frac{k}{2}-\frac{3}{4})}
   \rho_{j}(h) \langle y^{\frac{k}{2}+\frac{1}{4}} f \overline{\theta}, \mu_j \rangle.
\end{align*}

On the line $\Re s = \sigma$, Stirling's approximation gives the estimate
\begin{align*}
\Sigma_{\mathrm{disc}} \ll_k
\frac{h^{1-\sigma}}{\vert s \vert^{2\sigma-\frac{3}{2}}}
& \sum_j  \vert \rho_j(h)\langle y^{\frac{k}{2}+\frac{1}{4}} f \overline{\theta}, \mu_j \rangle \vert \\
& \quad \times \vert s + it_j \vert^{\sigma-1-\Im t_j} \vert s-it_j \vert^{\sigma-1+\Im t_j}
  e^{ \pi \vert s \vert - \pi \max(\vert s \vert , \vert t_j \vert)},
\end{align*}
showing that the mass of the discrete spectrum concentrates in $\vert t_j \vert < \vert s \vert$.
The contribution of Maass forms with $ \vert t_j \vert \ll_\kappa 1$ is $O_{k,N}(\vert s \vert^{-1/2})$, since there are $O_{k,N}(1)$ Maass forms of bounded spectral type by the Weyl law.

In the case $\vert t_j \vert \geq c_\kappa$,
we perform dyadic subdivision based on the size of $\min(\vert s + it_j \vert,\vert s - it_j \vert)$
and determine that
\begin{align*}
  \Sigma_{\mathrm{disc}}(s)
    &\ll_{f,h,\epsilon} \vert s \vert^{-\frac{1}{2}}
    + \vert s \vert^{\frac{1}{2}-\sigma} \sum_{0 \leq \ell \leq \log_2 \vert s \vert}
      (2^\ell)^{\sigma-1}
      (\vert s \vert -2^\ell)^{2+\epsilon} \\
    & \ll_{f,h,\epsilon} \vert s \vert^{\frac{3}{2}+\epsilon} \big( 1 + \vert s \vert^{1-\sigma} \big)
\end{align*}
by applying Corollary~\ref{cor:product-average}.

\subsection{Growth of \texorpdfstring{$\Sigma_{\mathrm{res}}$}{the residual spectrum}}
\label{subsec:residual-spectrum-growth}

The growth rate of the residual contribution $\Sigma_{\mathrm{res}}$ in vertical strips is obvious from Stirling's approximation and the explicit formulas~\eqref{eq:residual-squarefree-level} and~\eqref{eq:residual-general}. We conclude that $\Sigma_{\mathrm{res}}(s) \ll_{f,h} \vert s \vert^{-1/2}$.

\subsection{Growth of \texorpdfstring{$\Sigma_{\mathrm{cont}}$}{the continuous spectrum}}
  \label{subsec:continuous-spectrum-growth}

We recall from~\eqref{eq:continuous-spectrum} that the continuous spectrum's contribution towards $D_h(s)$ in $\Re s > \frac{1}{2}$ equals
\begin{align*}
\Sigma_{\mathrm{cont}}(s)
  := \frac{(4\pi)^{\frac{k}{2}-\frac{3}{4}}}{h^{s-1}} \sum_{\mathfrak{a}} &
  \int_{-\infty}^\infty \frac{\Gamma(s-\frac{1}{2} + it)\Gamma(s-\frac{1}{2}-it)}
        {\Gamma(s-\frac{k}{2}+\frac{1}{4})\Gamma(s+\frac{k}{2}-\frac{3}{4})} \\
&  \qquad
  \times \rho_\mathfrak{a}(h,\tfrac{1}{2}+it)
  \langle y^{\frac{k}{2}+\frac{1}{4}} f \overline{\theta},
      E_\mathfrak{a}^\kappa(\cdot,\tfrac{1}{2}+it; \chi) \rangle \, dt,
\end{align*}
in which $E_\mathfrak{a}^\kappa(z,v;\chi)$ is the Eisenstein series of weight $\kappa$, character $\chi \chi_{-1}^k$, and level $N$ at $\mathfrak{a}$ and $\rho_\mathfrak{a}(h,v)$ is its $h$th Fourier coefficient following~\eqref{eq:Fourier-Maass}.

To bound $\Sigma_{\mathrm{cont}}$, we need estimates for $\rho_\mathfrak{a}(h,v)$ and $\langle y^{\frac{k}{2}+\frac{1}{4}} f \overline{\theta} , E_\mathfrak{a}^\kappa(\cdot ,v; \chi) \rangle$ on the critical line $\Re v = \frac{1}{2}$.
These are given in the following lemmas.

\begin{lemma}
Let $\chi_{\kappa, h} = ( \frac{h(-1)^{\kappa-\frac{1}{2}}}{\cdot})$.
For any $\epsilon > 0$,
\[
  \rho_\mathfrak{a}(h,\tfrac{1}{2}+it)
  \ll_{h, \kappa, N, \epsilon}
  \frac{%
    L(\frac{1}{2} + 2it, \chi\chi_{\kappa,h})
  }{
    L^{(2N)}(1 + 4it, \chi^2)\Gamma(\frac{1}{2}+\frac{\kappa}{2}+it)
  }.
\]
\end{lemma}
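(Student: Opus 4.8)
The plan is to read off the claimed bound from the explicit formula for the Fourier coefficients of the weight $\kappa$ Eisenstein series $E_\mathfrak{a}^\kappa(z,w;\chi)$, specialized to the line $\Re w = \frac{1}{2}$. Unfolding the Eisenstein series at $\mathfrak{a}$ and carrying out the $x$-integral expresses $\rho_\mathfrak{a}(n,w)$, in the $W$-Whittaker normalization of~\eqref{eq:Fourier-Eisenstein}, as a product of three pieces: an elementary factor $|n|^{w-1}$ up to a power of $\pi$; an archimedean factor built from gamma functions, arising as the Mellin transform of the Whittaker function attached to the defining section (computable from~\cite[7.621]{GradshteynRyzhik07}); and a finite-part Dirichlet series $\sum_{c} c^{-2w}\,\mathcal{G}_\mathfrak{a}(n;c)$ over the admissible moduli $c$, in which $\mathcal{G}_\mathfrak{a}(n;c)$ is the twisted exponential sum attached to the cusp $\mathfrak{a}$ and the multiplier $\chi\chi_{-1}^k\upsilon_\theta^{-1}$. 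Because the theta multiplier is quadratic, these sums $\mathcal{G}_\mathfrak{a}(n;c)$ are, up to Kronecker-symbol twists, classical quadratic Gauss sums; see~\cite{Shimura73} and~\cite[\S15]{Stromberg08}.

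The next step is to evaluate the Dirichlet series. A standard computation in the style of Shimura separates the Gauss sums according to the square part of $n$: writing $n = n_0 f^2$ with $n_0$ squarefree, one finds that the series equals $L(2w-\tfrac{1}{2},\chi\chi_{\kappa,n})\,/\,L^{(2N)}(4w-1,\chi^2)$ times a correction factor $\mathcal{P}_n(w)$, where $\chi_{\kappa,n}=(\tfrac{n(-1)^{\kappa-\frac{1}{2}}}{\cdot})$ is the quadratic character cut out by the discriminant of $n$, the superscript $(2N)$ records that the Euler factors of the denominator at primes dividing $2N$ are removed, and $\mathcal{P}_n(w)$ is a finite Euler product over the primes dividing $2Nn$ whose local factors are rational functions of $p^{-2w}$ with denominators bounded away from zero on $\Re w=\frac{1}{2}$. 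Taking $n=h$ produces the character $\chi\chi_{\kappa,h}$ of the statement, and the trivial bound gives $\mathcal{P}_h(\tfrac{1}{2}+it)\ll_{h,\kappa,N,\epsilon}(1+|t|)^\epsilon$ (in fact $O_{h,\kappa,N}(1)$, the $\epsilon$ there only absorbing a crude count of the bad local factors).

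It remains to assemble the pieces on $\Re w=\frac{1}{2}$. Setting $w=\tfrac{1}{2}+it$ turns $2w-\tfrac{1}{2}$ into $\tfrac{1}{2}+2it$ and $4w-1$ into $1+4it$, so the $L$-factors are exactly those in the statement. For the archimedean factor one applies Stirling's formula together with Euler's reflection formula $\Gamma(z)\Gamma(1-z)=\pi/\sin(\pi z)$ to pair off conjugate gamma factors; after the cancellations every surviving factor is dominated by $(1+|t|)^\epsilon\,/\,\Gamma(\tfrac{1}{2}+\tfrac{\kappa}{2}+it)$ — this is the index-$(+\kappa/2)$ Whittaker branch, the relevant one since $h>0$, and the $\Gamma^{-1}$ there in fact provides far more room than the true archimedean growth requires, as $\rho_\mathfrak{a}(h,\cdot)$ times its Whittaker function is temperate. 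Since $h^{w-1}=O_h(1)$ on this line, multiplying the three pieces together yields the asserted bound.

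The main obstacle is bookkeeping at the bad primes, in particular at $p=2$, where the theta multiplier and the Kronecker symbols interact delicately: one must confirm that the relevant quadratic character is precisely $\chi_{\kappa,h}=(\tfrac{h(-1)^{\kappa-\frac{1}{2}}}{\cdot})$ and pin down the exact truncation $L^{(2N)}$ in the denominator, including the position of the Euler factor at $2$. This can be handled by the Gauss-sum computation sketched above, or sidestepped by quoting a known explicit formula for half-integral weight Eisenstein coefficients (e.g.~from~\cite{Stromberg08}) and specializing to $w=\tfrac{1}{2}+it$.
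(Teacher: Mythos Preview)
Your proposal is correct and follows essentially the same approach as the paper: both arguments recognize the Fourier coefficients of the half-integral weight Eisenstein series as Dirichlet $L$-functions (the paper citing Goldfeld--Hoffstein and~\cite{LowryDuda17} rather than writing out the unfolding and Gauss-sum computation), note the finite Dirichlet correction factor for non-squarefree $h$, and absorb the archimedean contribution into the displayed gamma factor. Your write-up is in fact more detailed than the paper's, which is essentially a reference to the literature together with the remark that the extension to general level and character is analogous.
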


\begin{proof}
  This result follows from recognizing the coefficients of $E_{\mathfrak{a}}^{\kappa}$
  as Dirichlet $L$-functions.
  The computations are tedious but very similar to the proofs of Proposition~1.2 and Corollary~1.3
  of~\cite{GoldfeldHoffstein85} (though the proofs there apply to a differently normalized
  Eisenstein series of level $4$, restrict to coefficients with square-free $m$, and don't evaluate the
  Archimedean integral).

  An alternative evaluation for general $m$ and with our normalization is summarized
  in~\cite[(2.1)-(2.2)]{LowryDuda17}.
  We note that the behavior of non-square-free coefficients $m$ differ from those of square-free
  coefficients by a finite Dirichlet correction factor depending on $m$.
  The generalization to higher level and non-trivial character is analogous.
\end{proof}

\begin{lemma}\label{lemma:continuous_inner_product}
  For any $\epsilon > 0$ and any singular cusp $\mathfrak{a}$,
  \begin{equation*}
    \langle y^{\frac{k}{2}+\frac{1}{4}} f \overline{\theta} , E_\mathfrak{a}^\kappa(\cdot ,\tfrac{1}{2}+it;\chi) \rangle
    \ll_{f, \epsilon}
    (1 + \lvert t \rvert)^{\frac{k}{2} + \epsilon}
    e^{-\frac{\pi}{2} \vert t_j \vert}.
  \end{equation*}
\end{lemma}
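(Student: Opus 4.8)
The plan is to run the argument of Section~\ref{sec:D-estimate} with an Eisenstein series in place of a Maass cusp form. Since $f$ is holomorphic it is a finite linear combination of the Poincar\'e series $P_m$ with $m\ll_{k,N}1$, so it suffices to bound $\langle y^{\frac{k}{2}+\frac{1}{4}}P_m\overline{\theta},E_\mathfrak{a}^\kappa(\cdot,\tfrac12+it;\chi)\rangle$ with implied constant depending only on $k,N,\epsilon$. First I would note that $y^{\frac{k}{2}+\frac{1}{4}}P_m\overline{\theta}$ decays exponentially at every cusp (as $P_m$ is cuspidal and $\theta$ is bounded), so the pairing converges absolutely even though $E_\mathfrak{a}^\kappa(\cdot,\tfrac12+it;\chi)$ is not square-integrable; in particular it is $O_{k,N}(1)$ for $\lvert t\rvert$ bounded, and one may assume $\lvert t\rvert\ge c_\kappa$. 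Then I would unfold $P_m$ exactly as in~\eqref{eq:Poincare-basis-unfolding}, but against the Fourier expansion~\eqref{eq:Fourier-Eisenstein} rather than~\eqref{eq:Fourier-Maass}: the $x$-integral forces the total frequency to vanish, and since $\theta(z)e(mz)$ carries only frequencies $\ge m>0$, the constant term of $E_\mathfrak{a}^\kappa$ contributes only the single term $n_1=m$ (present only if $m$ is a perfect square) and otherwise one is left with
\[
\langle y^{\frac{k}{2}+\frac{1}{4}}P_m\overline{\theta},E_\mathfrak{a}^\kappa(\cdot,\tfrac12+it;\chi)\rangle=\sum_{\substack{n_1+n_2=m\\ r_1(n_1)\neq0}}r_1(n_1)\,\overline{\rho_\mathfrak{a}(n_2,\tfrac12+it)}\,G(n_1,n_2,t)\ +\ (\text{the }n_2=0\text{ term}),
\]
where $G(n_1,n_2,t)$ is \emph{literally} the integral analyzed in Section~\ref{sec:D-estimate} with $t_j$ replaced by $t$ (conjugation sends the Whittaker index $w-\tfrac12=it$ to $-it$, and $W_{\eta,-\nu}=W_{\eta,\nu}$), so every bound there, including~\eqref{eq:inner-product-bound}, applies verbatim.

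The second ingredient is the size of the Eisenstein coefficients. I would recognize $\rho_\mathfrak{a}(n,\tfrac12+it)$ as essentially a ratio of Dirichlet $L$-functions times a finite correction factor, exactly as in the proof of the preceding lemma and in~\cite{GoldfeldHoffstein85,LowryDuda17}, the normalization containing a $\lvert n\rvert^{-1/4}$ factor already visible in the residual-spectrum computation of Section~\ref{subsec:residual-spectrum}. Combining convexity for $L(\tfrac12+2it,\chi\chi_{\kappa,n})$, the bound $\lvert L^{(2N)}(1+4it,\chi^2)\rvert^{-1}\ll_N(1+\lvert t\rvert)^\epsilon$, and Stirling for $\Gamma(\tfrac12+\tfrac\kappa2+it)$ yields, uniformly in $n$,
\[
\rho_\mathfrak{a}(n,\tfrac12+it)\ \ll_{N,\epsilon}\ \lvert n\rvert^{\epsilon}\,(1+\lvert t\rvert)^{\frac12-\frac k2+\epsilon}\,e^{\frac\pi2\lvert t\rvert}.
\]
For the boundedly many terms with $n_2\ge0$ (so $n_2\ll_{k,N}1$), combining this with $G(n_1,n_2,t)\ll\lvert t\rvert^{\kappa-\frac12}e^{-\pi\lvert t\rvert}$ gives a contribution $\ll(1+\lvert t\rvert)^{\frac k2-\frac12+\epsilon}e^{-\frac\pi2\lvert t\rvert}$, which is admissible; the $n_2=0$ term I would handle directly from $\int_0^\infty y^{\frac k2-\frac34\pm it}e^{-4\pi m y}\,\tfrac{dy}{y}=\Gamma(\tfrac k2-\tfrac14\pm it)(4\pi m)^{-\frac k2+\frac14\mp it}$, Stirling, and $\lvert\rho_\mathfrak{a}(0,\tfrac12+it)\rvert\le1$ (unitarity of the scattering matrix on the critical line).

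For the infinite tail $n_2<0$ --- i.e.\ $n_1=\ell^2$ with $\ell>\sqrt m$, so $\lvert n_2\rvert=\ell^2-m\asymp\ell^2$ --- I would use the $n_2<0$ estimate $G(n_1,n_2,t)\ll(\lvert t\rvert^{-1/2}+\lvert t\rvert^{\kappa-2\Re w-1})e^{-\pi\lvert t\rvert}\lvert n_2\rvert^{\Re w-\frac k2+\frac34}$, valid for any $\Re w\in(0,\kappa/2)$, multiply by the coefficient bound, and sum over $\ell$; the $\ell$-sum converges once $\Re w<\tfrac k2-\tfrac54$, and since $k\ge3$ is an integer this is compatible with $\Re w\in(0,\kappa/2)=(0,\tfrac k2-\tfrac14)$. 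Choosing such a $\Re w$ just below $\tfrac k2-\tfrac54$ makes the tail $\ll(1+\lvert t\rvert)^{\frac74-\frac k2+\epsilon}e^{-\frac\pi2\lvert t\rvert}$, which for $k\ge3$ is $\ll(1+\lvert t\rvert)^{\frac k2+\epsilon}e^{-\frac\pi2\lvert t\rvert}$. Summing the three contributions, then over the finitely many $P_m$, completes the proof. I expect the only genuinely delicate step to be the coefficient bound uniform in $n$: one must check that the $\lvert n\rvert^{1/4}$ produced by convexity is exactly cancelled by the normalization, so that no positive power of $\lvert n\rvert$ survives --- this is precisely what lets the $n_2<0$ sum converge at the borderline weight $k=3$. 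Everything else is the bookkeeping already carried out in Section~\ref{sec:D-estimate}.
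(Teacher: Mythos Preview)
Your approach is genuinely different from the paper's. The paper unfolds the \emph{Eisenstein series} rather than $P_m$: after the change of variables $z\mapsto\sigma_\mathfrak{a}z$, the inner product becomes a Rankin--Selberg integral
\[
\iint_{\Gamma_\infty\backslash\mathbb{H}} y^{v+\frac{k}{2}-\frac{3}{4}}\,f_\mathfrak{a}(z)\overline{\theta_\mathfrak{a}(z)}\,\frac{dxdy}{y^2}
=\frac{\Gamma(v+\frac{k}{2}-\frac{3}{4})}{(4\pi)^{v+\frac{k}{2}-\frac{3}{4}}}\sum_{n\ge1}\frac{a_\mathfrak{a}(n)\overline{r_\mathfrak{a}(n)}}{n^{v+\frac{k}{2}-\frac{3}{4}}},
\]
which the paper recognizes (after a cusp-by-cusp verification that $\theta_\mathfrak{a}$ is a twisted theta function) as essentially $L(2\overline{v}-\tfrac12,\mathrm{Sym}^2 f)$ up to finitely many Euler factors. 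Convexity for this degree-$3$ $L$-function together with Stirling on $\Gamma(\tfrac{k}{2}-\tfrac14-it)$ then gives the bound directly. Your route via the Poincar\'e decomposition of $f$ recycles the $G$-integral analysis of \S\ref{sec:D-estimate} and sidesteps the casework on $\theta_\mathfrak{a}$, but trades the single $\mathrm{Sym}^2$ $L$-function for an infinite sum of individual Eisenstein coefficients.

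There is a gap at the point you yourself flag as delicate. You invoke $\Gamma(\tfrac12+\tfrac{\kappa}{2}+it)$ uniformly in $n$, but this is the gamma factor attached to \emph{positive} Fourier index; for $n_2<0$ --- which is the entire infinite tail --- the Eisenstein coefficient carries $\Gamma(\tfrac12-\tfrac{\kappa}{2}+it)$ instead, and Stirling flips the $|t|$-power from $-\tfrac{\kappa}{2}$ to $+\tfrac{\kappa}{2}$. (This is the same $\mathrm{sgn}(n)$-dependence visible for Maass forms in Lemma~\ref{lem:Duke-bound} and Propositions~\ref{prop:spectral-average-appendix}--\ref{prop:m-average}.) The correct estimate for $n_2<0$ is
\[
\rho_\mathfrak{a}(n_2,\tfrac12+it)\ \ll_{N,\epsilon}\ |n_2|^{-\frac14+\epsilon}(1+|t|)^{\frac{k}{2}+\epsilon}e^{\frac{\pi}{2}|t|},
\]
worse by a factor $(1+|t|)^{k-\frac12}$ than what you wrote. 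Re-running your tail computation with this input and optimizing at $\Re w$ just below $\tfrac{k}{2}-1$ (the convergence threshold) yields $(1+|t|)^{\frac{k}{2}+\frac12+\epsilon}e^{-\frac{\pi}{2}|t|}$, off by $|t|^{1/2}$ from the lemma as stated. That would still give $\Sigma_{\mathrm{cont}}(s)\ll|s|^{\frac32+\epsilon}$, which is dominated by $\Sigma_{\mathrm{disc}}$ and therefore suffices for Theorem~\ref{thm:main-theorem}, but it does not prove the present lemma.
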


\begin{proof}
  The inner product $\langle y^{\frac{k}{2}+\frac{1}{4}} f \overline{\theta},
  E_\mathfrak{a}^\kappa(\cdot ,\overline{v};\chi) \rangle$ can be written as a Rankin--Selberg
  integral.
  Writing $\Gamma_\mathfrak{a}$ for the stabilizer of $\mathfrak{a}$ in $\Gamma_0(N)$, we recall
  that
  \[
    E_\mathfrak{a}^\kappa(z,v;\chi)
    =
    \sum_{\gamma \in \Gamma_{\mathfrak{a}} \backslash \Gamma_0(N)}
    \overline{\chi(\gamma)}
    J_\theta(\sigma_\mathfrak{a}^{-1} \gamma, z)^{-2\kappa}
    \Im(\sigma_{\mathfrak{a}}^{-1} \gamma z)^v,
  \]
  in which $\sigma_{\mathfrak{a}}$ is a scaling matrix for the cusp $\mathfrak{a}$. We take $\sigma_{\infty} = (\begin{smallmatrix} 1 & 0 \\ 0 & 1 \end{smallmatrix})$ and otherwise use the specific scaling matrix
\[
  \sigma_{\mathfrak{a}} =
    \left(\begin{matrix}
      \mathfrak{a} \sqrt{[N,w^2]} & 0 \\
      \sqrt{[N,w^2]} & 1/(\mathfrak{a} \sqrt{[N,w^2]})
    \end{matrix} \right)
\]
for the cusp $\mathfrak{a} = \frac{u}{w}$ to agree with~\cite[(2.3)]{DeshouillersIwaniec82}.

  A standard unfolding argument (following a change of variables
  $z \mapsto \sigma_\mathfrak{a} z$) then shows that
  \[
    \langle
      y^{\frac{k}{2}+\frac{1}{4}} f \overline{\theta},
      E_\mathfrak{a}^\kappa(\cdot ,\overline{v};\chi)
    \rangle
    =
    \iint_{\sigma_\mathfrak{a}^{-1}(\Gamma_\mathfrak{a} \backslash \mathbb{H})}
    y^{v+\frac{k}{2}+\frac{1}{4}} f_\mathfrak{a}(z) \overline{\theta_\mathfrak{a}(z)} \frac{dxdy}{y^2},
  \]
  in which  $\theta_\mathfrak{a} = \theta\vert_{\sigma_{\mathfrak{a}}}$ and $f_\mathfrak{a} = f\vert_{\sigma_{\mathfrak{a}}}$.
  We also note that $\sigma_\mathfrak{a}^{-1}(\Gamma_\mathfrak{a} \backslash \mathbb{H}) = \Gamma_\infty
  \backslash \mathbb{H}$.

  As in the standard Rankin--Selberg construction, this
  double integral has the Dirichlet series representation
  \begin{equation}\label{eq:continuous_dirichlet_series}
    \iint_{\Gamma_\infty \backslash \mathbb{H}}
    y^{v+\frac{k}{2}+\frac{1}{4}} f_\mathfrak{a}(z) \overline{\theta_\mathfrak{a}(z)} \frac{dxdy}{y^2}
    =
    \frac{\Gamma(v + \frac{k}{2} - \frac{3}{4})}{(4\pi)^{v + \frac{k}{2} - \frac{3}{4}}}
    \sum_{n \geq 1}
    \frac{a_\mathfrak{a}(n) \overline{r_\mathfrak{a}(n)}}
         {n^{v + \frac{k}{2} - \frac{3}{4}}},
  \end{equation}
  where $a_\mathfrak{a}(\cdot)$ and $r_\mathfrak{a}(\cdot)$ denote the Fourier coefficients of
  $f_\mathfrak{a}$ and $\theta_\mathfrak{a}$, respectively.

In the special case $\mathfrak{a} = \infty$ one can recognize the Dirichlet series in~\eqref{eq:continuous_dirichlet_series} in terms of the symmetric square $L$-function of $f$, so that
  \begin{equation*}
   \langle
     y^{\frac{k}{2}+\frac{1}{4}} f \overline{\theta}, E_\infty^\kappa(\cdot ,\tfrac{1}{2}+it;\chi)
   \rangle
   =
   \frac{%
     2\Gamma(\frac{\kappa}{2}-it)
     L(\frac{1}{2}-2it,\mathrm{Sym}^2 f)
   }{%
     (4\pi)^{\frac{\kappa}{2}-it}L(1-4it, \chi^2)
   },
  \end{equation*}
up to some factor addressing bad primes. In particular, in the case $\mathfrak{a} = \infty$, Lemma~\ref{lemma:continuous_inner_product} follows from the Phragm\'{e}n--Lindel\"{o}f convexity principle and Stirling's approximation.

More generally, Lemma~\ref{lemma:continuous_inner_product} reduces to convexity for the symmetric square $L$-function attached to (a twist of) the cusp form $f_\mathfrak{a}$. To see this, it suffices to show that $\theta_\mathfrak{a}$ has a Fourier expansion which resembles a twist of $\theta$ away from a finite set of exceptional primes $p \ll_N 1$. This can be verified through explicit computation.

For example, suppose that $\mathfrak{a} = \frac{u}{w}$ is $\Gamma_0(4)$-equivalent to the infinite cusp. Then there exists some matrix $\gamma \in \Gamma_0(4)$ so that $\infty = \gamma \cdot \mathfrak{a}$, which we may write in the form $\gamma =  (\begin{smallmatrix} a & b \\ w & -u \end{smallmatrix})$. By carefully tracking square roots in the relevant $j$-factors, we compute that
\begin{align} \label{eq:theta-explicit-equivalent-infinite}
\theta_\mathfrak{a}(z)
  = -i \, \epsilon_a^{-1} \Big(\frac{-w}{a}\Big) \Big(\frac{N}{(N,w^2)}\Big)^{\frac{1}{4}}
  \sum_{n \in \mathbb{Z}} e\Big(\frac{-n^2 b}{u}\Big) e\Big(\frac{n^2 N}{(N,w^2)} z \Big).
\end{align}
This implies that the Dirichlet series in~\eqref{eq:continuous_dirichlet_series} equals a certain symmetric square $L$-function away from bad primes, so the lemma holds whenever $\mathfrak{a}$ is $\Gamma_0(4)$-equivalent to $\infty$.

The casework for cusps which are $\Gamma_0(4)$-equivalent to $0$ or $\frac{1}{2}$ is suitably analogous, so we omit details.
\end{proof}

Combining these two lemmas, we find that
\begin{align} \label{eq:multiplied-continuous-bound}
  & \rho_\mathfrak{a}(h,\tfrac{1}{2}+it)
  \langle y^{\frac{k}{2}+\frac{1}{4}} f \overline{\theta} , E_\mathfrak{a}^\kappa(\cdot ,\tfrac{1}{2}+it;\chi) \rangle
  \\
  & \qquad \quad \ll_{f,h,\epsilon}
  (1+\lvert t \rvert)^{\frac{1}{2} - \frac{k}{2} +\epsilon}
  \cdot
  (1 + \lvert t \rvert)^{\frac{k}{2} + \epsilon}
  \ll_{f,h,\epsilon}
  (1+\lvert t \rvert)^{\frac{1}{2} + \epsilon}.
\end{align}
Consequently, for $\Re s = \sigma > \frac{1}{2}$, we have
\begin{align*}
  \Sigma_{\mathrm{cont}}
  & \ll_{f,h,\epsilon} \int_{-\infty}^\infty
  \bigg\lvert
    \frac{\Gamma(s-\frac{1}{2} + it)\Gamma(s-\frac{1}{2}-it)}
         {\Gamma(s-\frac{k}{2}+\frac{1}{4}) \Gamma(s+\frac{k}{2}-\frac{3}{4})}
  \bigg\rvert
  \lvert t \rvert^{\frac{1}{2} + \epsilon} dt
  \\
  & \ll \lvert s \rvert^{\frac{3}{2}-2\sigma} \!
    \int_{-\infty}^\infty
    \lvert s-it \rvert^{\sigma - 1}
    \lvert s+it \rvert^{\sigma - 1}
    \lvert t \rvert^{\frac{1}{2} + \epsilon}
    e^{-\pi \max( \lvert \Im s \rvert, \lvert t \rvert) + \pi \lvert \Im s \rvert} dt.
\end{align*}
The exponential terms effectively concentrate mass in $\lvert t \rvert < \lvert \Im s \rvert$, so that
\begin{align*}
  \Sigma_{\mathrm{cont}}(s)
  & \ll_{f,h,\epsilon} \lvert s \rvert^{\frac{1}{2}-\sigma}
  \int_{0}^{\lvert \Im s \rvert} \lvert s - it \rvert^{\sigma -1} \lvert t \rvert^{\frac{1}{2} + \epsilon} dt
  \ll_{f,h,\epsilon} \lvert s \rvert^{1-\sigma+\epsilon} + \lvert s \rvert^{1 + \epsilon},
\end{align*}
and hence $\Sigma_{\mathrm{cont}}(s) \ll_{f,h,\epsilon} \vert s \vert^{1+\epsilon}$ in $\Re s > \frac{1}{2}$.

\begin{remark} \label{rem:continuous-spectrum-conjecture}
  The bound $\Sigma_{\mathrm{cont}}(s) \ll_{f,h,\epsilon}  \vert s \vert^{1+\epsilon}$ suffices for our purposes but is by no means sharp. Under the generalized Lindel\"{o}f hypothesis, the upper bound~\eqref{eq:multiplied-continuous-bound} improves to $O_{f,h,\epsilon}((1+\vert t \vert)^{-\frac{1}{2}+\epsilon})$ and it would follow that $\Sigma_{\mathrm{cont}}(s) \ll_{f,h,\epsilon}  \vert s \vert^{\epsilon}$ in the half-plane $\Re s > \frac{1}{2}$.
\end{remark}

\subsection{Contour Shifting} \label{sec:contour-shifting}

The growth estimates from~\S{\ref{subsec:discrete-spectrum-growth}},~\S{\ref{subsec:residual-spectrum-growth}}, and~\S{\ref{subsec:continuous-spectrum-growth}} imply that the growth of $D_h(s)$ in vertical strips in $\Re s > \frac{1}{2}$ is dominated by that of $\Sigma_{\mathrm{disc}}$. Hence $D_h(s) \ll  \vert s \vert^{\frac{3}{2} + \epsilon}(1+ \vert s \vert^{1-\Re s})$ in a fixed vertical strip in $\Re s > \frac{1}{2}$,
where here and throughout~\S{\ref{sec:contour-shifting}} all implicit constants are allowed to depend on $f$, $h$, $\epsilon$, and $\Re s$ (where it appears).

In particular, on the line $\Re s = \frac{1}{2} + \epsilon$, it follows that
\begin{align} \label{eq:D_h-critical-line}
	D_h(\tfrac{s}{2}+\tfrac{1}{4}) X^s/s
	\ll \vert s \vert^{1+ \epsilon} X^{\frac{1}{2}+\epsilon}.
\end{align}
Note also that $D_h(\frac{s}{2}+\frac{1}{4}) X^s/s \ll X^{1+\epsilon}/\vert s \vert$ on the line $\Re s = 1+\epsilon$, by absolute convergence of the Dirichlet series.
In the vertical strip $\Re s \in ( \frac{1}{2}+ \epsilon, 1+\epsilon)$ between these estimates, $D_h(\frac{s}{2}+ \frac{1}{4}) X^s/s$ is meromorphic, with simple poles at most at $s=1$ (from $\Sigma_{\mathrm{hol}}$ and $\Sigma_{\mathrm{res}}$) and each real $s = \frac{1}{2} \pm 2it_j$ corresponding to an exceptional eigenvalue (from $\Sigma_{\mathrm{disc}}$, excluding $\Sigma_{\mathrm{hol}}$ by convention).
The Weyl law implies that exceptional eigenvalues, if they exist, are limited in number by $O_{k,N}(1)$.


Away from these finitely many poles, the convexity principle implies that
$D_h(\frac{s}{2}+\frac{1}{4}) X^s/ s \ll \vert s \vert^{1+ \epsilon} X^{\frac{1}{2}+\epsilon} + X^{1+\epsilon} / \vert s \vert$ in the vertical strip $\Re s \in ( \frac{1}{2}+ \epsilon, 1+\epsilon)$. We conclude from~\eqref{eq:discrete-spectrum}, \eqref{eq:discrete-spectrum-residue}, and~\eqref{eq:residual-general} that
\begin{align}
\begin{split} \label{eq:first-contour-shift}
 \frac{1}{4\pi i} \int_{1+\epsilon -iT}^{1+\epsilon+iT}
   D_h(\tfrac{s}{2}+\tfrac{1}{4})  X^s \frac{ds}{s} & \\
 \qquad = (b_{f,h} + c_{f,h}) X + \mathfrak{R}_E
   & + \frac{1}{4\pi i} \int_{\frac{1}{2}+\epsilon -iT}^{\frac{1}{2}+\epsilon+iT} \!
    D_h(\tfrac{s}{2}+\tfrac{1}{4}) X^s \frac{ds}{s} \\
&   + O\Big(\frac{X^{1+\epsilon}}{T} + T^{1+\epsilon} X^{\frac{1}{2}+\epsilon} \Big) ,
\end{split}
\end{align}
in which $\mathfrak{R}_E$ is the sum over possible residues arising from exceptional eigenvalues, given explicitly by
\begin{align*}
\mathfrak{R}_E :=
&  (4\pi)^{\frac{k}{2}+\frac{1}{4}} h^{\frac{1}{2}} X^\frac{1}{2}
    \sum_{it_j \in \mathbb{R}} \!
      \frac{(X^2/h)^{it_j} \Gamma(2i t_j) \rho_j(h) \langle y^{\frac{k}{2}+\frac{1}{4}} f \overline{\theta},\mu_j \rangle}
      {(\frac{1}{2}+2it_j) \Gamma(\frac{3}{4}-\frac{k}{2}+it_j) \Gamma(\frac{k}{2}-\frac{1}{4} + it_j)} \\
& \quad + (4\pi)^{\frac{k}{2}+\frac{1}{4}} h^{\frac{1}{2}} X^\frac{1}{2}
    \sum_{it_j \in \mathbb{R}} \!
      \frac{(X^2/h)^{-it_j} \Gamma(-2i t_j) \rho_j(h) \langle y^{\frac{k}{2}+\frac{1}{4}} f \overline{\theta},\mu_j \rangle}
      {(\frac{1}{2}-2it_j) \Gamma(\frac{3}{4}-\frac{k}{2}-it_j) \Gamma(\frac{k}{2}-\frac{1}{4} - it_j)}.
\end{align*}

The contribution of the continuous spectrum $\Sigma_{\mathrm{cont}}$ in $D_h(\frac{s}{2}+\frac{1}{4})$ on the line $\Re s = \frac{1}{2}+\epsilon$ is $O(X^{\frac{1}{2}+\epsilon} T^{1+\epsilon})$ following~\S{\ref{subsec:continuous-spectrum-growth}}. For the components of $D_h(\frac{s}{2}+\frac{1}{4})$ coming from the residual and discrete spectra, we shift the vertical contour farther left, to the line $\Re s = \epsilon$. In $\Sigma_{\mathrm{disc}}$, this shift passes a line segment of \emph{non-exceptional} spectral poles, which contributes a finite sum of residues $\mathfrak{R}$ of the form
\begin{align*}
& 2 (4\pi)^{\frac{k}{2}+\frac{1}{4}} h^{\frac{1}{2}} X^\frac{1}{2} \Re \!
\Big(
\sum_{0 \leq t_j \leq 2T} \!
  \frac{(X^2/h)^{it_j} \Gamma(2i t_j) \rho_j(h)
      \langle y^{\frac{k}{2}+\frac{1}{4}} f \overline{\theta},\mu_j \rangle}
  {(\frac{1}{2}+2it_j) \Gamma(\frac{3}{4}-\frac{k}{2}+it_j) \Gamma(\frac{k}{2}-\frac{1}{4} + it_j)} \Big) \\
& \,\,\, = (4\pi)^{\frac{k}{2}-\frac{1}{4}} h^{\frac{1}{2}} X^\frac{1}{2} \Im \!
\Big(
\sum_{0 \leq t_j \leq 2T} \!\!\!\!
\frac{(4X^2/h)^{it_j} \rho_j(h) \langle y^{\frac{k}{2}+\frac{1}{4}} f \overline{\theta}, \mu_j \rangle}{t_j} \big(1+ O_k(\tfrac{1}{t_j})\big)\! \Big).
\end{align*}

We treat $\mathfrak{R}$ as an error term and apply Corollary~\ref{cor:product-average} to conclude that
\begin{align} \label{eq:discrete-residue-bound}
\mathfrak{R} & \ll
  X^{\frac{1}{2}} \! \sum_{\ell \leq \log_2 T} \sum_{\vert t_j \vert \sim 2^{-\ell} T}
  \frac{\vert \rho_j(h) \langle y^{\frac{k}{2}+\frac{1}{4}} f \overline{\theta},\mu_j \rangle \vert}
        {\vert t_j \vert} \\ \nonumber
& \ll X^{\frac{1}{2}} \! \sum_{\ell \leq \log_2 T} (T/2^\ell)^{1 +\epsilon}
\ll X^{\frac{1}{2}} T^{1 +\epsilon}.
\end{align}

Following~\eqref{eq:perron},~\eqref{eq:first-contour-shift},~\eqref{eq:discrete-residue-bound}, and the estimate $O(X^{\frac{1}{2}+\epsilon} T^{1+\epsilon})$ for the shifted continuous spectrum, we have
\begin{align*}
 & \sum_{m^2 +h \leq X^2} A(m^2+h)  = (b_{f,h} + c_{f,h}) X + \mathfrak{R}_E \\
& \qquad \quad   + \frac{1}{4\pi i} \int_{\epsilon -iT}^{\epsilon+iT}
    (\Sigma_{\mathrm{res}} + \Sigma_{\mathrm{disc}})(\tfrac{s}{2}+\tfrac{1}{4}) X^s \frac{ds}{s}
 + O\bigg(\frac{X^{1+\epsilon}}{T}
    + T^{1 + \epsilon} X^{\frac{1}{2}+\epsilon}
    \bigg).
\end{align*}

The contour integral over $\Re s = \epsilon$ is $O(T^{\frac{9}{4} + \epsilon} X^{\epsilon})$ following the upper bounds $\Sigma_{\mathrm{disc}}(s) \ll \vert s \vert^{\frac{9}{4} +\epsilon}$ and $\Sigma_{\mathrm{res}}(s) \ll \vert s \vert^{-\frac{1}{2}}$ on the line $\Re s = \frac{1}{4}+\epsilon$. By taking $T = X^{1/4}$, we optimize our collective errors to size $O(X^{\frac{3}{4} + \epsilon})$. Since $\mathfrak{R}_E \ll X^{1/2 + \Theta} \ll X^{39/64}$ by comments in~\S{\ref{subsec:discrete-spectrum}, the potential contribution of $\mathfrak{R}_E$ may be ignored. This completes the proof of our main arithmetic result.

\begin{theorem} \label{thm:main-theorem}
For $k \geq 3$, $h > 0$, and any $\epsilon > 0$, we have
\[\sum_{m^2 +h \leq X^2} A(m^2+h) =
  (b_{f,h} + c_{f,h}) X + O_{f,h,\epsilon}\big(X^{\frac{3}{4} + \epsilon}\big),
\]
in which $b_{f,h} =0$ in many cases following Remark~\ref{rem:holomorphic-main-term} and $c_{f,h}=0$ in many cases following Remark~\ref{rem:general-main-term}.
\end{theorem}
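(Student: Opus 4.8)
The plan is to read the asymptotic off from the analytic properties of $D_h(s)$ established in the preceding sections, using Perron's formula followed by two leftward contour shifts. First I would apply Perron's formula exactly as in~\eqref{eq:perron}, expressing $\sum_{m^2+h\le X^2}A(m^2+h)$ as a truncated integral of $D_h(\tfrac{s}{2}+\tfrac14)X^s/s$ along $\Re s=1+\epsilon$, with error $O(X^{1+\epsilon}/T)$; on this line the Dirichlet series converges absolutely, so the integrand is $O(X^{1+\epsilon}/|s|)$. The shift of variables sends the potential pole of $D_h$ at argument $\tfrac34$ to $s=1$, and the exceptional poles at argument $\tfrac12\pm it_j$ to $s=\tfrac12\pm 2it_j$.

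Next I would substitute the spectral expansion $D_h=\Sigma_{\mathrm{disc}}+\Sigma_{\mathrm{res}}+\Sigma_{\mathrm{cont}}$ and shift the contour to $\Re s=\tfrac12+\epsilon$. The vertical-strip estimates of~\S\ref{subsec:discrete-spectrum-growth}--\S\ref{subsec:continuous-spectrum-growth}, namely $\Sigma_{\mathrm{disc}}(s),\Sigma_{\mathrm{cont}}(s)\ll|s|^{3/2+\epsilon}(1+|s|^{1-\Re s})$ and $\Sigma_{\mathrm{res}}(s)\ll|s|^{-1/2}$ in $\Re s>\tfrac12$, render the horizontal connecting segments negligible at truncation height $T$ and control the new vertical integral. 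Crossing $s=1$ produces the simple pole with residue $(b_{f,h}+c_{f,h})X$, assembled from $\Sigma_{\mathrm{hol}}$ via~\eqref{eq:discrete-spectrum-residue} and from $\Sigma_{\mathrm{res}}$ via~\eqref{eq:residual-general}; crossing the finitely many (by the Weyl law) exceptional points $s=\tfrac12\pm 2it_j$ contributes $\mathfrak{R}_E$, which is $\ll X^{1/2+\Theta}\ll X^{39/64}$ by the Kim--Sarnak bound $\Theta\le\tfrac{7}{64}$ and so is absorbed into the final error.

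I would then separate $\Sigma_{\mathrm{cont}}$, whose contribution on $\Re s=\tfrac12+\epsilon$ is $O(X^{1/2+\epsilon}T^{1+\epsilon})$ by~\S\ref{subsec:continuous-spectrum-growth}, and push the remaining $\Sigma_{\mathrm{disc}}+\Sigma_{\mathrm{res}}$ farther left to $\Re s=\epsilon$. This crosses the line segment of non-exceptional spectral poles $s=\tfrac12\pm 2it_j$ with $0\le t_j\le 2T$, whose total residue $\mathfrak{R}$ is an oscillatory sum bounded dyadically by Corollary~\ref{cor:product-average} as $O(X^{1/2}T^{1+\epsilon})$; on the line $\Re s=\epsilon$ the convexity estimate $\Sigma_{\mathrm{disc}}(\tfrac{s}{2}+\tfrac14)\ll|s|^{9/4+\epsilon}$ together with $\Sigma_{\mathrm{res}}\ll|s|^{-1/2}$ bounds the leftover integral by $O(T^{9/4+\epsilon}X^{\epsilon})$. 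Collecting the errors $O(X^{1+\epsilon}/T+T^{1+\epsilon}X^{1/2+\epsilon}+T^{9/4+\epsilon}X^{\epsilon})$ and choosing $T=X^{1/4}$ balances the first two to $O(X^{3/4+\epsilon})$ and makes the third smaller, yielding the stated asymptotic; the vanishing of $b_{f,h}$ and $c_{f,h}$ in many cases is immediate from Remarks~\ref{rem:holomorphic-main-term} and~\ref{rem:general-main-term}.

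The main obstacle lies not in this assembly but in the ingredient it consumes: the discrete-spectrum growth bound $\Sigma_{\mathrm{disc}}(s)\ll|s|^{3/2+\epsilon}$ for $\Re s>\tfrac12$, which rests on Corollary~\ref{cor:product-average}, hence on the second-moment estimate $\mathfrak{D}\ll 1+T^{k+3/2+\epsilon}$ of Theorem~\ref{thm:D-estimate} and the Fourier-coefficient averages of~\S\ref{sec:fourier-coefficient-averages}. Within the present step the only delicate point is bookkeeping: verifying that in $\tfrac12+\epsilon\le\Re s\le 1+\epsilon$ (respectively $\epsilon\le\Re s\le\tfrac12+\epsilon$ for $\Sigma_{\mathrm{disc}}+\Sigma_{\mathrm{res}}$) no poles are encountered beyond $s=1$ and the finitely many $s=\tfrac12\pm 2it_j$, and that the stated polynomial growth forces the horizontal segments to vanish as $T\to\infty$; both follow from the estimates already in hand.
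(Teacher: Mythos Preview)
Your proposal is correct and follows essentially the same route as the paper: Perron's formula, a first shift to $\Re s=\tfrac12+\epsilon$ picking up the pole at $s=1$ and the exceptional residues $\mathfrak{R}_E$, separation of $\Sigma_{\mathrm{cont}}$, a second shift of $\Sigma_{\mathrm{disc}}+\Sigma_{\mathrm{res}}$ to $\Re s=\epsilon$ through the non-exceptional spectral poles bounded via Corollary~\ref{cor:product-average}, and optimization at $T=X^{1/4}$. Two small points of phrasing: the horizontal segments do not ``vanish as $T\to\infty$'' but rather contribute the error $O(X^{1+\epsilon}/T+T^{1+\epsilon}X^{1/2+\epsilon})$ via the convexity bound you cite, and the paper actually proves the sharper $\Sigma_{\mathrm{cont}}(s)\ll|s|^{1+\epsilon}$ rather than $|s|^{3/2+\epsilon}(1+|s|^{1-\Re s})$, though your weaker statement suffices.
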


\begin{remark}
The heuristic and conditional improvements to Corollary~\ref{cor:product-average} noted in Remark~\ref{rem:discrete-spectrum-conjecture} would imply that $\Sigma_{\mathrm{disc}} \ll \vert s \vert^{1+\epsilon} + \vert s \vert^{2-\Re s +\epsilon}$ and that $\mathfrak{R} \ll X^{\frac{1}{2}} T^{\frac{1}{2}+\epsilon}$. In addition, our bound for the shifted continuous integral would improve to $O(X^{\frac{1}{2}+\epsilon} T^\epsilon)$ under the generalized Lindel\"{o}f hypothesis following Remark~\ref{rem:continuous-spectrum-conjecture}. Optimizing errors with $T=X^{1/3}$ would improve the error in Theorem~\ref{thm:main-theorem} to $O(X^{\frac{2}{3}+\epsilon})$, which is comparable to Bykovskii's work on the divisor function~\cite{Bykovskii87}.  Error terms of size $O(X^{\frac{1}{2}+\epsilon})$ are conjectured to hold in both problems.
\end{remark}

\appendix
\section{A spectral average of Fourier coefficients}\label{sec:appendix}

The purpose of this appendix is to prove Proposition~\ref{prop:spectral-average-appendix}, a
strengthened version of~\cite[Lemma 5]{Blomer08}.
The main idea and strategy goes back to Kuznetsov~\cite{Kuznetsov80} which reduces the problem to a
bound on Kloosterman sums and an oscillatory integral.
The main improvement over~\cite[Lemma 5]{Blomer08} comes from studying further the
oscillatory integral in
Proposition~\ref{prop:appendix-oscill-int}.

\subsection*{Half-integral Kloosterman sums}%

Let $\ell \in \mathbb{Z}$ be an odd integer and let $\chi$ be a Dirichlet character of modulus $N$
for some $N \in \mathbb{N}$.
For $m,n \in \mathbb{Z}$ and $c \in \mathbb{N}$ with $[4,N] \mid c$, we define the Kloosterman sum
\begin{equation}
	K_{\ell}(m,n;c; \chi)
  :=
  \sum_{ad \equiv 1 \bmod (c)}
  \epsilon_d^{\ell} \, \overline{\chi(d)} \legendre{c}{d} e\left( \frac{ma+nd}{c} \right),
	\label{eq:appendix-Kloosterman-def}
\end{equation}
where $\epsilon_d$ is $1$ or $i$ as with signs of Gauss sums and $\legendre{c}{d}$ is the extended
Kronecker symbol as in \S\ref{sec:triple-inner-product}.
We will require the following bound, whose proof we defer until \S\ref{ssec:technical}.

\begin{proposition}\label{prop:appendix-Kloos-bound}
  Let $m,n \in \mathbb{Z}$ and $c,N \in \mathbb{N}$ be integers with $[4,N] \mid c$.
  Let $\chi$ be a Dirichlet character of modulus $N$.
  Then, for an odd integer $\ell \in \mathbb{Z}$, we have the bound
	\begin{equation*}
	\lvert K_{\ell}(m,n;c;\chi) \rvert \le 4 \tau(c) (m,n,c)^{1/2} c^{1/2} N^{1/2}.
	\end{equation*}
\end{proposition}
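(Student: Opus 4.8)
The plan is to prove Proposition~\ref{prop:appendix-Kloos-bound} by reducing $K_\ell(m,n;c;\chi)$ to prime-power moduli and then treating primes dividing $2N$ and primes coprime to $2N$ by different methods. First I would record a twisted multiplicativity property: for $c = c_1 c_2$ with $(c_1,c_2)=1$, quadratic reciprocity applied to $\legendre{c}{d} = \legendre{c_1}{d}\legendre{c_2}{d}$, together with the factorisations of $\epsilon_d$ (a function of $d \bmod 4$) and of $\chi(d) = \prod_p \chi_p(d)$, lets one write $K_\ell(m,n;c;\chi)$ as a product of two sums of the same shape to the moduli $c_1$ and $c_2$, at the cost of auxiliary quadratic characters (depending on $c_1 \bmod 4$, $c_2 \bmod 4$, etc.) that can be absorbed into the $\chi$-twist of each factor and into a Jacobi-symbol interchange on the arguments $m,n$. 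Iterating this, it suffices to bound the local factor at each prime power $p^{v_p(c)}$ and multiply.

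For a prime power $p^{\alpha}$ with $p \nmid 2N$ the local character is trivial, and $K_\ell(\,\cdot\,,\,\cdot\,;p^{\alpha};\mathbf{1})$ is, up to a unimodular constant, a Sali\'e sum of the form $\sum_{x \bmod p^{\alpha}} \legendre{x}{p^{\alpha}}\, e\big(\tfrac{mx + n\bar x}{p^{\alpha}}\big)$. These evaluate in closed form: the sum vanishes unless $mn$ is a square modulo the relevant power of $p$, and when nonzero it equals $\epsilon_{p^{\alpha}}\,p^{\alpha/2}\legendre{n}{p^{\alpha}}$ times a short exponential over the square roots of $mn$. Bounding the number of such square roots and separating out the common factor of $m,n$ with $p^{\alpha}$ gives a local bound $\le 2\,(m,n,p^{\alpha})^{1/2} p^{\alpha/2}$; the factor $2$ at each prime is the source of the eventual $\tau(c)$.

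For a prime power $p^{\alpha}$ with $p \mid 2N$ the ramified component $\chi_p$ obstructs the clean Sali\'e evaluation, so I would split according to the size of $\alpha$ relative to $v := v_p(N)$. When $\alpha$ is large compared with $v$, the $d$-sum is handled by the $p$-adic stationary-phase method: parametrising $d = d_0(1 + p^{\beta}u)$ one observes that $\chi_p$ and $\epsilon_d\legendre{c}{d}$ are essentially constant in $u$ on the relevant range, and the remaining Gauss-type sum in $u$ either vanishes or contributes square-root cancellation, yielding $\le C_p\,(m,n,p^{\alpha})^{1/2} p^{\alpha/2}$ with $C_p$ bounded (and $C_2$ controlled by an explicit $2$-adic computation, which is where the constant $4$ rather than $2$ enters). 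When $\alpha$ is small compared with $v$ — so that $p^{\alpha/2} \le p^{v/2}$ in the worst case, or when $p^{v}\mid(m,n,c)$ — the trivial bound $\lvert K_\ell\rvert \le p^{\alpha}$ already supplies $p^{\alpha/2}$ times a factor at most $p^{v/2}$, and multiplying these over $p \mid 2N$ produces the $N^{1/2}$. This bad-prime analysis is the main obstacle: one must carry the quadratic Gauss-sum data $\epsilon_d\legendre{c}{d}$ and the ramified $\chi_p$ through the stationary-phase argument, pin down precisely the threshold in $\alpha$ at which cancellation begins, and check that the bounded local losses assemble to no more than $N^{1/2}$ rather than $N$.

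Finally I would reassemble the estimate: the primes coprime to $2N$ contribute $2^{\omega(c')}(m,n,c')^{1/2}(c')^{1/2}$, the primes dividing $2N$ contribute $(m,n,c'')^{1/2}(c'')^{1/2}$ times the product of the bounded constants $C_p$ and of the $p^{v_p(N)/2}$ factors. Using $2^{\omega(c)} \le \tau(c)$, reconciling the local gcd's into $(m,n,c)^{1/2}$, and keeping explicit track of the $p = 2$ and small-exponent constants, one arrives at $\lvert K_\ell(m,n;c;\chi)\rvert \le 4\,\tau(c)\,(m,n,c)^{1/2} c^{1/2} N^{1/2}$, which matches (and makes uniform in $N$) the bound implicit in~\cite[Lemma~5]{Blomer08}.
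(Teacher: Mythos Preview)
Your proposal is correct in outline and follows the same overall architecture as the paper's proof (twisted multiplicativity to reduce to prime powers, then local bounds), but the execution of the local analysis is genuinely different. The paper introduces the auxiliary Dirichlet-twisted Sali\'e sums $S(m,n;c;\chi)$ and proves a precise twisted multiplicativity (Lemmas~\ref{lem:appendix-Kloost-twist-mult} and~\ref{lem:appendix-Salie-twist-mult}) that factors $K_{\ell}(m,n;c;\chi)$ into an odd-part Sali\'e sum and a $2$-power Kloosterman sum; the local bounds at odd prime powers (Lemma~\ref{lem:appendix-Salie-bound}) and at powers of $2$ (Lemma~\ref{lem:appendix-Kloos-2-bound}) are then obtained by direct appeal to~\cite[Theorem~9.3 and Propositions~9.4, 9.7, 9.8]{KnightlyLi13}, which already package the Weil and stationary-phase input. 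In particular, the paper makes no case distinction between primes $p \mid N$ and $p \nmid N$ at the odd part, and no splitting by the size of $\alpha$ relative to $v_p(N)$: all of that is absorbed into the Knightly--Li citation, and the factor $N^{1/2}$ enters simply as $\mathfrak{c}_{\chi}^{1/2}$ in those bounds. Your route via explicit Sali\'e evaluation for unramified primes and $p$-adic stationary phase for ramified primes is essentially a self-contained rederivation of the cited bounds, and would work, but is longer and requires more bookkeeping (especially at the threshold exponents and at $p=2$). The paper's treatment of the constant $4$, via expanding $\epsilon_d^{\ell}$ as a linear combination of Dirichlet characters modulo $8$ and then invoking~\cite[Theorem~9.3]{KnightlyLi13}, is close in spirit to what you sketch for the $2$-adic factor.
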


\subsection*{An oscillatory integral}%

The following oscillatory integral appears in the Kuznetsov pre-trace formula:
\begin{equation}\label{eq:appendix-I-int-def}
	I_{\kappa}(\omega,t) = -2i \omega \int_{-i}^{i} K_{2it}\left( \omega q \right) q^{\kappa-1} dq,
\end{equation}
for $\kappa,\omega,t \in \mathbb{R}$ with $\omega > 0$ and where the integral $\int_{-i}^{i}$ is
taken along the unit circle in positive/anti-clockwise direction.
We will require the following bound.

\begin{proposition}\label{prop:appendix-oscill-int}
  For $\kappa \in ]-2,2[$ and $T \ge 0$, we have
	\begin{equation}\label{eq:appendix-G-definition}
		G_{\kappa}(\omega,T)
    =
    \int_0^{T} t I_{\kappa}(\omega,t) dt \ll
    \begin{cases}
      \omega^{1/2}, & \omega \ge 1, \\
      \omega(1+|\log(\omega)|), & \omega \le 1,
    \end{cases}
	\end{equation}
	where the implied constant depends only on $\kappa$.
\end{proposition}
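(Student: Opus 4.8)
The plan is to turn $G_\kappa(\omega,T)$ into a single one-dimensional oscillatory integral in which the uniformity in $T$ shows up as cancellation, and then to estimate that integral by stationary phase. First I would parametrize the contour in \eqref{eq:appendix-I-int-def} by $q=e^{i\phi}$, $\phi\in[-\tfrac{\pi}{2},\tfrac{\pi}{2}]$, getting $I_\kappa(\omega,t)=2\omega\int_{-\pi/2}^{\pi/2}e^{i\kappa\phi}K_{2it}(\omega e^{i\phi})\,d\phi$. On the open interval $\Re(\omega e^{i\phi})>0$, so inserting $K_{2it}(z)=\int_0^\infty e^{-z\cosh u}\cos(2tu)\,du$ and applying Fubini (justified by $\int_{-\pi/2}^{\pi/2}K_0(\omega\cos\phi)\,d\phi<\infty$) yields
\[
I_\kappa(\omega,t)=2\omega\int_0^\infty\cos(2tu)\,\Phi_\kappa(\omega\cosh u)\,du,\qquad
\Phi_\kappa(v):=\int_{-\pi/2}^{\pi/2}e^{i\kappa\phi}e^{-ve^{i\phi}}\,d\phi .
\]
Thus $g_\omega(u):=2\omega\,\Phi_\kappa(\omega\cosh u)$ is smooth, even, and $O(1/\cosh u)$, and $I_\kappa(\omega,\cdot)$ is its cosine transform. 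Integrating by parts in $\phi$ produces $\Phi_\kappa'=-\Phi_{\kappa+1}$ and the recursion $\Phi_\kappa(v)=\tfrac{2}{v}\sin(v-\tfrac{(\kappa-1)\pi}{2})+\tfrac{\kappa-1}{v}\Phi_{\kappa-1}(v)$, which together with the trivial bound $|\Phi_\kappa(v)|\le\int_{-\pi/2}^{\pi/2}e^{-v\cos\phi}\,d\phi\ll_\kappa\min(1,v^{-1})$ gives the two facts I need: $\Phi_\kappa(v)=\tfrac{2}{\kappa}\sin\tfrac{\kappa\pi}{2}+O_\kappa(v)$ near $0$, and $\Phi_\kappa(v)=\tfrac{2}{v}\sin(v-\tfrac{(\kappa-1)\pi}{2})+O_\kappa(v^{-2})$ for $v\ge1$. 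The hypothesis $\kappa\in(-2,2)$ is exactly what keeps these quantities finite and the exponents in range.

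Next I would extract the cancellation in the $t$-integral. Using $\int_0^T t\cos(2tu)\,dt=\partial_u[\tfrac{1-\cos(2uT)}{4u}]$ and integrating by parts in $u$ — the boundary terms vanish because $g_\omega$ decays, because $(1-\cos 2uT)/u\to0$ as $u\to0^+$, and because $g_\omega'(u)/u$ extends continuously to $u=0$ by evenness — I obtain
\[
G_\kappa(\omega,T)=\frac14\int_0^\infty\frac{g_\omega'(u)}{u}\bigl(\cos(2uT)-1\bigr)\,du .
\]
Here the $(-1)$-part is independent of $T$, and the $\cos(2uT)$-part is the cosine transform at $2T$ of the even function $g_\omega'(u)/u=-\tfrac{2\omega^2\sinh u}{u}\Phi_{\kappa+1}(\omega\cosh u)$, which for $\omega\ge1$ equals $\tfrac{4\omega\tanh u}{u}\sin(\omega\cosh u-\tfrac{\kappa\pi}{2})$ up to an $O(\tanh u/u)$ error. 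The uniformity in $T$ is precisely the statement that this cosine transform stays $O(1)$ (indeed $o(1)$ as $T\to\infty$), which is a stationary-phase estimate.

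For the estimate itself, take $\omega\ge1$ first. The phase $\omega\cosh u$ in the $(-1)$-part is stationary only at the endpoint $u=0$, where its second derivative is $\omega$ and the amplitude is $O(\omega)$, contributing $O(\omega\cdot\omega^{-1/2})=O(\omega^{1/2})$, with an absolutely convergent remainder of size $O(1)$. For the $\cos(2uT)$-part the phases $\omega\cosh u\pm2uT$ have their only stationary point at $u_\ast=\operatorname{arcsinh}(2T/\omega)$, where the amplitude is $\ll\omega\min(1,u_\ast^{-1})$ and the second derivative is $(\omega^2+4T^2)^{1/2}$; the stationary contribution is then $\ll\omega\min(1,u_\ast^{-1})(\omega^2+4T^2)^{-1/4}$, which one checks is $\ll\omega^{1/2}$ for every $T\ge0$, the non-stationary remainder being $O(1)$. (For $\omega\ge1$ and $T\le\omega^{1/2}$ one may instead bound $|G_\kappa|\le\tfrac{T^2}{2}\sup_t|I_\kappa(\omega,t)|\ll T^2\omega^{-1/2}\le\omega^{1/2}$, where $|I_\kappa(\omega,t)|\ll_\kappa\omega^{-1/2}$ follows from stationary phase in the representation above together with the exponential decay of $K_{2it}$ in $t$.) For $\omega\le1$ the same decomposition applies; the new feature is the plateau $0\le u\lesssim\log(1/\omega)$, on which $\omega\cosh u\lesssim1$ and $\Phi_{\kappa+1}(\omega\cosh u)=O(1)$, where one invokes $\int_0^{\log(1/\omega)}\tfrac{\sinh u}{u}\,du\ll(\omega\log(1/\omega))^{-1}$, while on $u\gtrsim\log(1/\omega)$ one uses the oscillation of $\Phi_{\kappa+1}$; this produces a bound of the shape $\omega(1+|\log\omega|)$.

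I expect this last step to be the main obstacle, and the difficulty is concentrated at the contour endpoints $q=\pm i$, i.e.\ at $u=0$: there $K_{2it}$ decays only polynomially, and crude absolute-value estimates give the lossy exponent $\omega$ rather than the sharp $\omega^{1/2}$, which is recovered only from the genuine oscillatory cancellation in the asymptotics of $\Phi_\kappa$. Bookkeeping the joint dependence on $\omega$ and $T$ — in particular following the stationary point $u_\ast=\operatorname{arcsinh}(2T/\omega)$ as $T$ ranges over $[0,\infty)$ and checking that its contribution never exceeds the target in any regime — is the technically delicate part of the argument.
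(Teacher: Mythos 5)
Your plan arrives, after the substitution $q = e^{i\phi}$ and inserting the $K$-Bessel integral representation, at the one-dimensional formula
\[
G_\kappa(\omega,T) = \tfrac{1}{4}\int_0^\infty \frac{g_\omega'(u)}{u}\bigl(\cos(2uT)-1\bigr)\,du,
\qquad g_\omega'(u) = -2\omega^2 \sinh u\,\Phi_{\kappa+1}(\omega\cosh u).
\]
Deforming the arc defining $\Phi_{\kappa+1}$ to the imaginary axis shows $\Phi_{\kappa+1}(v) = 2\int_0^1 \cos(vp - \pi\kappa/2)\,p^\kappa\,dp$, so after rescaling $q = \omega p$ this is \emph{identical} to the paper's key intermediate formula~\eqref{eq:appendix-G-kappa-eps-simple}. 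So up to this point the two proofs coincide under a change of variables; the route through $J$-Bessel functions via Proskurin versus the route through $K_{2it}$ and $\Phi_\kappa$ is a matter of bookkeeping, not substance.

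Where you and the paper diverge is the endgame, and there I see two issues. First, you split off the $\cos(2uT)$-part as a separate cosine transform and propose to chase the stationary point $u_* = \operatorname{arcsinh}(2T/\omega)$ as $T$ varies. This is the ``technically delicate part'' you flag at the end, and it is indeed a genuine obstacle in the form you have set it up. The paper avoids it entirely: keeping the factor $(1-\cos(2T\xi))$ intact, it bounds the $\xi$-integral by the second-derivative test (Lemma~\ref{lem:appendix-2-derivative-est}) on unit intervals $[\ell,\ell+1]$. Because $\sin(\eta\cosh\xi - \pi\kappa/2)\cos(2T\xi)$ has combined phases $\eta\cosh\xi \pm 2T\xi$ whose \emph{second} derivative is $\eta\cosh\xi$ — unaffected by $T$ — and the amplitude $\tanh\xi/\xi$ is $T$-independent, the second-derivative test is automatically $T$-uniform. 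No stationary point in $T$ needs to be tracked. Your decomposition makes this free uniformity a hard stationary-phase problem.

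Second, and more seriously, the asymptotic $\Phi_\mu(v) = \tfrac{2}{v}\sin(v - \tfrac{(\mu-1)\pi}{2}) + O_\mu(v^{-2})$ is stated but not proved, and it is exactly here that the whole proposition lives or dies. Integrating by parts once in $p$ gives the leading term; the $O(v^{-2})$ on the remainder requires, for $\mu-1\in(0,1)$ (equivalently $\kappa\in(0,1)$), the identity $\int_0^\infty \sin(q - \pi\kappa/2)\,q^{\kappa-1}\,dq = 0$, which the paper singles out (crediting its observation in the $\kappa=\tfrac{1}{2}$ case to Andersen--Duke) as the crucial point of the argument, and which fails outright once $\kappa\le 0$ — there the integral diverges at $0$ and the paper instead works with the tail $\int_\eta^\infty$ and proves the analogue~\eqref{eq:appendix-inner-Omega-int-by-parts} by a different integration by parts. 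Your single uniform two-term asymptotic for $\Phi_\mu$ on the whole range $\mu\in(-1,3)$ thus sweeps the paper's case distinction ($0<\kappa<1$, $1\le\kappa<2$, $-2<\kappa<0$) and its serendipitous identity under a single unproven $O(v^{-2})$. Until you verify that error term uniformly — which means redoing precisely the case analysis you have compressed — the proposal is a plan, not a proof. The $\omega\le1$ estimate is similarly sketched: you correctly identify the plateau $u\lesssim\log(1/\omega)$ but don't combine the trivial bound there with the oscillatory cancellation on $u\gtrsim\log(1/\omega)$ to arrive at the stated $\omega(1+\lvert\log\omega\rvert)$.

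In short: same key formula, sound heuristics, but the student proposal replaces the paper's simple $T$-uniformity trick with a harder stationary-phase problem, and leaves unproven the one delicate asymptotic ($O(v^{-2})$ for $\Phi_\mu$) that carries all the weight.
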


The special case $\kappa=0$ was first treated by Kuznetsov~\cite[\S 5]{Kuznetsov80}.
Other special cases and slight variants may also be found in the literature: the case $\kappa=1$ was
treated by Humphries~\cite[\S6]{Humphries18} and the cases $\kappa = \pm \frac{1}{2}$ with alternate $t$
averages were considered by Ahlgren--Andersen~\cite[\S 3]{AhlgrenAndersen18},
Andersen--Duke~\cite[\S4]{AndersenDuke20}, and Blomer \cite[Lemma 5]{Blomer08}.

Our general strategy of proof is the same as in the former four references.
However, a crucial point in the analysis will be the vanishing of a particular integral
(see~\eqref{eq:appendix-second-int-completion}).
This has been observed by Andersen--Duke~\cite{AndersenDuke20} in the case $\kappa=\frac{1}{2}$,
though little attention has been brought to this serendipity.
We also defer this proof until \S\ref{ssec:technical}.

\subsection*{A spectral average}%

Denote by $\upsilon_{\theta}$ be the weight $\frac{1}{2}$ $\theta$-multiplier system.
Let $N \in \mathbb{N}$ be an integer divisible by $4$ and $\chi$ a Dirichlet character of modulus $N$.
Further, let $\ell \in \mathbb{Z}$ an odd integer such that $\chi \upsilon_{\theta}^{\ell}$ is a
multiplier system of weight $\kappa \in \{\frac{1}{2},\frac{3}{2}\}$, i.e.\@ $\ell \in \{1,3\}$ if
$\chi$ is even or $\ell \in \{\pm 1\}$ if $\chi$ is odd.

Let $\{\mu_j\}_{j \ge 0}$ together with $\{E^{\kappa}_{\mathfrak{a}}(\cdot,w)\}_{\mathfrak{a},
\Re(w)=\frac{1}{2}}$ denote a complete $\Delta_{\kappa}$-eigenpacket for the $L^2$-space of
functions $f: \mathbb{H} \to \mathbb{C}$ satisfying
\begin{itemize}
  \item $f(\gamma z) = \chi(\gamma)\upsilon_{\theta}(\gamma)^{\ell}
  \left(\tfrac{j(\gamma,z)}{|j(\gamma,z)|}\right)^{\kappa}f(z)$, for all $\gamma \in \Gamma_0(N)$
  and $z \in \mathbb{H}$,

	\item $f$ is of at most moderate growth at the cusps of $\Gamma_0(N)$.
\end{itemize}
Here, the inner product is given by
\begin{equation*}
  \langle f,g \rangle
  =
  \int_{\Gamma_0(N) \backslash \mathbb{H}} f(z) \overline{g(z)} \frac{dxdy}{y^2}.
\end{equation*}
We suppose that the eigenpacket is normalized such that the spectral expansion
\begin{equation*}
  f(z)
  =
  \sum_j \langle f, \mu_j \rangle \mu_j(z)
  +
  \frac{1}{4\pi i} \sum_{\mathfrak{a}}
  \int_{\left(\frac{1}{2}\right)}
  \langle f, E_{\mathfrak{a}}^{\kappa}(\cdot,w) \rangle E^{\kappa}_{\mathfrak{a}}(z,w) dw
\end{equation*}
holds in $L^2$.
In particular, the discrete part $\{\mu_j\}_{j \ge 0}$ is $L^2$-normalized.
We write $t_j$ for the spectral parameter of $\mu_j$, which is characterized (up to sign) by the
equation $(\frac{1}{4}+t_j^2+\Delta_{\kappa})u_j=0$.
We have either $t_j \in \mathbb{R}$ or $t_j \in [-\frac{1}{4}i,\frac{1}{4}i]$, see~\cite[Satz
5.4]{Roelcke66}.
As in~\eqref{eq:Fourier-Maass}, we denote by $\rho_j(n)$, respectively $\rho_{\mathfrak{a}}(n,w)$,
the Fourier coefficients (at the cusp $\infty$) of $\mu_j$, respectively
$E^{\kappa}_{\mathfrak{a}}(\cdot,w)$, for $n \in \mathbb{Z} \backslash \{0\}$.
We have the following pre-trace formula,
see~\cite[Lemma 3]{Proskurin05}\footnote{\label{foot:convergence}The bounds established in
the proof of Proposition~\ref{prop:appendix-Kloos-bound}
guarantee the absolute convergence of both sides as $\sigma \to
1^+$.},~\cite[Lemma 3]{Blomer08}, or~\cite[Propositions 3.6.8, 3.6.9]{SteinerPhDthesis}\footnote{See
footnote~\ref{foot:convergence}.}.

\begin{proposition}[Kuznetsov pre-trace formula]\label{prop:appendix-pre-trace}
  Let $m,n \in \mathbb{Z} $ two integers satisfying $mn> 0$. Denote by $\pm$ the sign of $m$
  (respectively $n$).
  Then, for any $t \in \mathbb{R}$, we have
	\begin{multline}\label{eq:Kuz-pre-equal}
		\sum_{j} \frac{\sqrt{mn} }{\cosh(\pi(t-t_j))\cosh(\pi(t+t_j))} \overline{\rho_j(m)} \rho_j(n)\\
		+
    \frac{1}{4\pi} \sum_{\mathfrak{a}} \int_{-\infty}^{\infty}
    \frac{\sqrt{mn} }{\cosh(\pi(t-r))\cosh(\pi(t+r))}
    \overline{\rho_{\mathfrak{a}}(m;\tfrac{1}{2}+ir)}
    \rho_{\mathfrak{a}}(n;\tfrac{1}{2}+ir) dr
    \\
		=
    \frac{\lvert \Gamma(1\mp\frac{\kappa}{2}+it) \rvert^2}{4\pi^3}
    \Biggl\{
      \delta_{m,n}
      +
      \sum_{c \equiv 0 \, (N)} \frac{K_{\ell}(m,n;c;\chi)}{c}
      I_{\pm\kappa}\left( \frac{4 \pi \sqrt{mn}}{c} ,t \right)
    \Biggr\},
	\end{multline}
  where $K_{\ell}(m,n;c;\chi)$ is as in~\eqref{eq:appendix-Kloosterman-def} and
  $I_{\kappa}(\omega,t)$ is as in~\eqref{eq:appendix-I-int-def}.
\end{proposition}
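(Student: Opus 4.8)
The plan is to derive~\eqref{eq:Kuz-pre-equal} by forming a weight-$\kappa$ Poincaré series attached to a Whittaker eigenfunction of spectral parameter $it$ and computing one of its Fourier coefficients in two ways, as in~\cite{Kuznetsov80, Proskurin05, SteinerPhDthesis}. Fix $m,n$ with $mn>0$ and let $\pm$ denote their common sign. For an auxiliary parameter $s$ with $\Re s$ large enough to force absolute convergence, set
\[
  P_m(z; s, t) = \sum_{\gamma \in \Gamma_\infty \backslash \Gamma_0(N)}
    \overline{\chi(\gamma)}\,\overline{\upsilon_\theta(\gamma)^{\ell}}\,
    \Big( \tfrac{j(\gamma,z)}{|j(\gamma,z)|} \Big)^{-\kappa}
    \Phi_{s,t}\big(\Im(\gamma z)\big)\, e\big(m\,\Re(\gamma z)\big),
\]
where $\Phi_{s,t}$ is a regularized seed chosen to specialize, up to an explicit constant, to $W_{\pm \kappa/2,\,it}(4\pi|m|y)$ as $s$ tends to its critical value; the free parameter $t \in \mathbb{R}$ of~\eqref{eq:Kuz-pre-equal} is exactly the spectral parameter of this seed, and the case distinction $\pm$ reflects the choice of first index of $W$, which flips $\kappa \mapsto -\kappa$ and so interchanges $I_\kappa$ and $I_{-\kappa}$. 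The identity will then follow by equating the geometric and spectral evaluations of the $n$-th Fourier coefficient of $P_m$, paired in the $y$-variable against the matching Whittaker kernel, and then passing to the critical limit in $s$.

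On the geometric side I would split $\Gamma_\infty \backslash \Gamma_0(N) / \Gamma_\infty$ according to the lower-left entry $c$. The coset $c=0$ is $\Gamma_\infty$ itself and contributes the seed, hence $\delta_{m,n}$ after the $x$-integration; every other coset has $N \mid c$ and, after Poisson summation in $x$ (the Lipschitz summation formula) together with the cocycle computation turning $\overline{\upsilon_\theta(\gamma)^\ell}(\tfrac{j}{|j|})^{-\kappa}$ into the factor $\epsilon_d^{\ell}\overline{\chi(d)}\legendre{c}{d}$, contributes $c^{-1}K_{\ell}(m,n;c;\chi)$ times an integral transform of $\Phi_{s,t}$. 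A standard integral identity for the Whittaker function identifies this transform, in the limit, with the contour integral $I_{\pm\kappa}(4\pi\sqrt{mn}/c,t)$ of~\eqref{eq:appendix-I-int-def}, and the Archimedean normalization produces the constant $|\Gamma(1\mp\tfrac{\kappa}{2}+it)|^2/(4\pi^3)$. On the spectral side I would insert the $L^2$-expansion of $P_m(\cdot;s,t)$, so that the $n$-th Fourier coefficient becomes $\sum_j \langle P_m,\mu_j\rangle \rho_j(n) W_{\pm\kappa/2,it_j}(4\pi|n|y)$ plus the analogous Eisenstein integral. Unfolding $\langle P_m,\mu_j\rangle$ against the Fourier expansion~\eqref{eq:Fourier-Maass} of $\mu_j$ extracts $\overline{\rho_j(m)}$ times a product integral of two Whittaker functions with parameters $it$ and $it_j$; evaluating this integral via a Mellin--Barnes representation and applying the reflection formula collapses the resulting gamma factors to $(\cosh\pi(t-t_j)\cosh\pi(t+t_j))^{-1}$, and the same computation with $it_j$ replaced by $ir$ handles the Eisenstein term. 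Matching the two evaluations and cancelling the common factor $|\Gamma(1\mp\tfrac{\kappa}{2}+it)|^2/(4\pi^3)$ yields~\eqref{eq:Kuz-pre-equal}.

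The substantive difficulty is analytic rather than formal. At the critical value of $s$ the Poincaré series is not absolutely convergent --- indeed its seed only decays like $y^{1/2}$ at the cusp, so the series is not even in $L^2$ there --- which forces the regularization and a limiting argument. Justifying this limit requires, on the one hand, termwise control of the spectral side through the Weyl law and standard bounds for the coefficients $\rho_j$, and, on the other hand, the absolute convergence of the Kloosterman-sum series on the right as $s \to 1^+$; the latter is exactly where the Weil-type bound of Proposition~\ref{prop:appendix-Kloos-bound} enters, as recorded in the footnote. The remaining labour --- carefully tracking the $\epsilon_d^{\ell}$ and $\legendre{c}{d}$ factors produced by the theta-multiplier, verifying the integral identity that produces $I_{\pm\kappa}$, and evaluating the Whittaker product integral --- is routine and is carried out in detail in the cited references.
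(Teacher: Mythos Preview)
Your sketch is correct, but note that the paper does not actually prove this proposition: it simply cites \cite[Lemma~3]{Proskurin05}, \cite[Lemma~3]{Blomer08}, and \cite[Propositions~3.6.8, 3.6.9]{SteinerPhDthesis}, adding only (via a footnote) that the Kloosterman bound of Proposition~\ref{prop:appendix-Kloos-bound} guarantees absolute convergence of both sides as $\sigma \to 1^+$. Your outline is exactly the standard argument found in those references---a Poincar\'e series with Whittaker seed, its Fourier coefficient computed geometrically (double-coset decomposition yielding $\delta_{m,n}$ plus the Kloosterman terms with weight $I_{\pm\kappa}$) and spectrally (unfolding against $\mu_j$ and the Eisenstein series to extract $\overline{\rho_j(m)}\rho_j(n)$ with the $\cosh$-weights), followed by the limiting passage in the auxiliary parameter---and you have correctly isolated the one analytic point the paper flags, namely that convergence of the geometric side at the critical parameter is supplied by the Weil-type bound on $K_\ell$.
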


We are now ready to state and prove the primary proposition.

\begin{proposition}\label{prop:appendix-Fourier-inequality}
  Let $m \in \mathbb{Z}\backslash\{0\}$ be a non-zero integer and let $\pm$ denote its sign.
  Then, for any $T \ge 1$, we have
	\begin{multline*}
		\sum_{\lvert t_j \rvert \le T} \frac{\lvert m \rvert}{\cosh(\pi t_j)}
    \max\{1,\lvert t_j \rvert^{\kappa}\}^{\pm 1} \lvert \rho_j(m) \rvert^2
    \\
		+
    \frac{1}{4 \pi} \sum_{\mathfrak{a}} \int_{-T}^{T}
    \frac{\lvert m \rvert}{\cosh(\pi r)} \max\{1,\lvert r \rvert^{\kappa}\}^{\pm 1}
    \lvert \rho_{\mathfrak{a}}(m; \tfrac{1}{2}+ir) \rvert^2 dr
    \\
		\ll T^2+(m,N)^{1/2}  \frac{\lvert m \rvert^{1/2}}{N^{1/2}} \lvert mN \rvert^{o(1)}.
	\end{multline*}
\end{proposition}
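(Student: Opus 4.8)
The plan is to specialize the Kuznetsov pre-trace formula of Proposition~\ref{prop:appendix-pre-trace} to $n = m$ (so $mn = m^2 > 0$ and $\pm$ is the sign of $m$, exactly as in the statement) and then integrate the resulting identity in $t$ against the non-negative weight $t\,\lvert\Gamma(1 \mp \tfrac{\kappa}{2} + it)\rvert^{-2}\,dt$ over $0 \le t \le T$. This choice of weight is forced: it cancels the Gamma factor on the right of~\eqref{eq:Kuz-pre-equal}, so that the integrated right-hand side collapses, via the definition~\eqref{eq:appendix-G-definition} of $G_{\pm\kappa}$, to
\[
  \frac{1}{4\pi^3}\left(\frac{T^2}{2} + \sum_{c \equiv 0\,(N)} \frac{K_\ell(m,m;c;\chi)}{c}\, G_{\pm\kappa}\!\left(\frac{4\pi\lvert m\rvert}{c},\,T\right)\right).
\]
Every summand on the spectral side of~\eqref{eq:Kuz-pre-equal} and every point of the continuous-spectrum integrand is non-negative, because $\cosh(\pi(t-r))\cosh(\pi(t+r)) = \tfrac12\bigl(\cosh(2\pi t) + \cosh(2\pi r)\bigr)$ is positive for real $r$ and also for the exceptional parameters $r = t_j \in i[-\tfrac14,\tfrac14]$, where $\cosh(2\pi t_j) = \cos(2\pi\lvert t_j\rvert) > 0$ by~\cite[Satz 5.4]{Roelcke66}. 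Hence Tonelli lets me interchange the $t$-integration with the spectral sum and the continuous integral on the left, while the interchange with the $c$-sum on the right is legitimized by the absolute convergence recorded in the footnote to Proposition~\ref{prop:appendix-pre-trace}.

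After the interchange, the coefficient of $\lvert m\rvert\,\lvert\rho_j(m)\rvert^2$ on the spectral side is
\[
  W(t_j) := \int_0^T \frac{t\,dt}{\cosh(\pi(t-t_j))\cosh(\pi(t+t_j))\,\lvert\Gamma(1 \mp \tfrac{\kappa}{2} + it)\rvert^2},
\]
with the analogous $W(r)$ weighting the continuous term. The crux is a uniform lower bound $W(r) \gg_\kappa \max\{1,\lvert r\rvert^{\kappa}\}^{\pm 1}/\cosh(\pi r)$ valid for all $\lvert r\rvert \le T$, real or exceptional. For $\lvert r\rvert \ll_\kappa 1$ this is immediate on restricting to $t \in [0,1]$, where the denominator is bounded and nonzero, giving $W(r) \gg_\kappa \int_0^1 t\,dt \gg_\kappa 1 \asymp \max\{1,\lvert r\rvert^{\kappa}\}^{\pm 1}/\cosh(\pi r)$. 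For $1 \le \lvert r\rvert \le T$ I would use Stirling in the form $\lvert\Gamma(1 \mp \tfrac{\kappa}{2} + it)\rvert^{-2} \asymp_\kappa t^{\mp\kappa - 1}e^{\pi t}$ for $t \ge 1$ together with $\cosh(2\pi t) + \cosh(2\pi r) \asymp e^{2\pi\max(t,\lvert r\rvert)}$, and restrict the integral to $t \in [\tfrac12\lvert r\rvert, \min(T,\lvert r\rvert)] \subset [0,T]$, an interval of length $\gg \lvert r\rvert$ on which $t \asymp \lvert r\rvert$ and the integrand is $\asymp_\kappa t^{\mp\kappa}e^{-\pi\lvert r\rvert}$; integrating gives $W(r) \gg_\kappa \lvert r\rvert^{\mp\kappa}e^{-\pi\lvert r\rvert}$, which matches the asserted weight since~\eqref{eq:Kuz-pre-equal} carries $\mp$ while the proposition carries $\pm$. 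Granting this and discarding nothing on the left, the integrated identity yields that the full left-hand side of the proposition is
\[
  \ll_\kappa T^2 + \sum_{c \equiv 0\,(N)} \frac{\lvert K_\ell(m,m;c;\chi)\rvert}{c}\,\Bigl\lvert G_{\pm\kappa}\!\Bigl(\frac{4\pi\lvert m\rvert}{c},\,T\Bigr)\Bigr\rvert,
\]
where I have bounded the possibly-negative Kloosterman contribution by its absolute value using that the spectral side of~\eqref{eq:Kuz-pre-equal} is non-negative.

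It remains to estimate the $c$-sum with Propositions~\ref{prop:appendix-Kloos-bound} and~\ref{prop:appendix-oscill-int}. Inserting $\lvert K_\ell(m,m;c;\chi)\rvert \le 4\tau(c)(m,c)^{1/2}c^{1/2}N^{1/2}$ and the two-regime bound $G_{\pm\kappa}(\omega,T) \ll_\kappa \omega^{1/2}$ for $\omega \ge 1$, $G_{\pm\kappa}(\omega,T) \ll_\kappa \omega(1+\lvert\log\omega\rvert)$ for $\omega \le 1$, with $\omega = 4\pi\lvert m\rvert/c$ (so $\omega \ge 1 \iff c \le 4\pi\lvert m\rvert$), I would split the $c$-sum at $c \asymp \lvert m\rvert$ to bound it by
\[
  \ll_\kappa N^{1/2}\lvert m\rvert^{1/2}\!\!\sum_{\substack{c\equiv 0\,(N)\\ c \le 4\pi\lvert m\rvert}}\!\frac{\tau(c)(m,c)^{1/2}}{c} \; + \; N^{1/2}\lvert m\rvert\!\!\sum_{\substack{c\equiv 0\,(N)\\ c > 4\pi\lvert m\rvert}}\!\frac{\tau(c)(m,c)^{1/2}\bigl(1 + \log(c/\lvert m\rvert)\bigr)}{c^{3/2}}.
\]
Writing $c = Nc'$, using $(m,Nc')^{1/2} \le (m,N)^{1/2}(m,c')^{1/2}$, and applying the routine divisor-sum estimates $\sum_{c'\le Y}\tau(Nc')(m,c')^{1/2}/c' \ll \lvert mN\rvert^{o(1)}$ and $\sum_{c'>Y}\tau(Nc')(m,c')^{1/2}(1+\log(Nc'))/c'^{3/2} \ll (1+Y)^{-1/2}\lvert mN\rvert^{o(1)}$ (with $Y \asymp \lvert m\rvert/N$ in the second sum; when $\lvert m\rvert < N$ one uses $\lvert m\rvert = N^{1/2}\cdot\lvert m\rvert/N^{1/2} \le N^{1/2}\lvert m\rvert^{1/2}$), both inner sums reduce to $\tfrac{(m,N)^{1/2}}{N}\lvert m\rvert^{-1/2}\lvert mN\rvert^{o(1)}$, and multiplying through by the prefactors gives $\ll (m,N)^{1/2}\lvert m\rvert^{1/2}N^{-1/2}\lvert mN\rvert^{o(1)}$. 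Combined with the $T^2$ term this is the asserted bound.

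I expect the uniform lower bound for $W(r)$ to be the main obstacle: one must make the powers of $\lvert r\rvert$, the sign of $\kappa$, and the exponential factor match the precise weight in the statement, uniformly across the regimes $\lvert r\rvert \ll 1$, $1 \le \lvert r\rvert \le T$, and $\lvert r\rvert$ near the endpoint $T$, and including the exceptional imaginary parameters --- routine Stirling bookkeeping, but easy to get slightly off. The $c$-sum is entirely standard, and all the genuine analytic input is packaged in Propositions~\ref{prop:appendix-Kloos-bound} and~\ref{prop:appendix-oscill-int}; in particular the sharp bound $G_{\pm\kappa}(\omega,T) \ll_\kappa \omega^{1/2}$ for $\omega \ge 1$ is exactly what produces the $\lvert m\rvert^{1/2}$ (rather than $\lvert m\rvert$) in the final estimate.
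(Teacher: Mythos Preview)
Your proposal is essentially identical to the paper's proof: specialize the pre-trace formula to $m=n$, integrate against $t\,\lvert\Gamma(1\mp\tfrac{\kappa}{2}+it)\rvert^{-2}$ over $[0,T]$, establish a lower bound for the resulting spectral weight, and control the geometric side via Propositions~\ref{prop:appendix-Kloos-bound} and~\ref{prop:appendix-oscill-int}.

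The one slip is in your lower bound for $W(r)$ when $1\le\lvert r\rvert\le T$. On the interval $[\tfrac12\lvert r\rvert,\lvert r\rvert]$ the integrand is \emph{not} uniformly $\asymp t^{\mp\kappa}e^{-\pi\lvert r\rvert}$: the factor $e^{\pi t}$ coming from Stirling varies by $e^{\pi\lvert r\rvert/2}$ across that interval, so the lower bound you extract would be too small by an exponential factor. The paper instead restricts to the unit-length window $[\lvert r\rvert-\tfrac12,\lvert r\rvert]$, on which $\cosh(\pi(t-r))\cosh(\pi(t+r))\asymp\cosh(\pi r)\cosh(\pi t)$ and the integrand is genuinely $\asymp \lvert r\rvert^{\pm\kappa}$ (note also that Stirling gives $\lvert\Gamma(1\mp\tfrac{\kappa}{2}+it)\rvert^{-2}\asymp t^{\pm\kappa-1}e^{\pi t}$, not $t^{\mp\kappa-1}e^{\pi t}$, so no sign-swap is needed at the end). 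With that correction your argument goes through exactly as outlined, and the $c$-sum estimate matches the paper's line for line.
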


\begin{remark}
  A more careful analysis, as in~\cite[\S 5]{Kuznetsov80} or~\cite[\S 3]{AhlgrenAndersen18}, turns
  the upper bound into the asymptotic
  \begin{equation*}
    \frac{1}{4\pi^2}T^2+O\left(T \log(2T)+(m,N)^{1/2}
    \frac{|m|^{1/2}}{N^{1/2}} |mN|^{o(1)}\right).
  \end{equation*}
  We leave the details to the interested reader.
\end{remark}

\begin{remark}
  Proposition~\ref{prop:spectral-average-appendix} follows from
  Proposition~\ref{prop:appendix-Fourier-inequality} after applying normalized
  Maass weight increasing and decreasing operators, which are an isometry on
  the space generated by the spectrum $\lambda \geq \frac{1}{4}$ (for $\kappa
  \not \in \mathbb{Z}$).
\end{remark}

\begin{proof}[Proof of Proposition~\ref{prop:appendix-Fourier-inequality}]
  We apply the Kuznetsov pre-trace formula~\ref{prop:appendix-pre-trace} with $m=n$, mutliply the
  equality with $2 \pi t|\Gamma(1\mp \frac{\kappa}{2}+it)|^{-2}$, and integrate $t$ from $0$ to $T$ to
  arrive at:
	\begin{multline*}
		\sum_{\lvert t_j \rvert \le T} \frac{\lvert m \rvert}{\cosh(\pi t_j)}  \lvert \rho_j(m) \rvert^2 H_{\pm \kappa}(t_j,T)
    \\
		+
    \frac{1}{4 \pi} \sum_{\mathfrak{a}} \int_{-T}^{T} \frac{\lvert m \rvert}{\cosh(\pi r)}
    \lvert \rho_{\mathfrak{a}}(m; \tfrac{1}{2}+ir) \rvert^2 H_{\pm \kappa}(r,T)  dr
    \\
		=
    \frac{1}{4 \pi^2} T^2
    +
    \sum_{c \equiv 0 \, (N)} \frac{K_{\ell}(m,m;c;\chi)}{c} G_{\pm\kappa}
    \left( \frac{4 \pi |m|}{c} ,T \right),
	\end{multline*}
	where $G_{\kappa}(\omega,T)$ is as in~\eqref{eq:appendix-G-definition} and
  \begin{align*}
		H_{\kappa}(r,T)
    &=
    2 \pi \int_{0}^T \frac{t}{\lvert \Gamma(1-\frac{\kappa}{2}+it) \rvert^2}
    \frac{\cosh(\pi r)}{\cosh(\pi(t+r))\cosh(\pi(t-r))} dt
    \\
		&=
    2 \int_0^T \frac{\pi t}{ \lvert \Gamma(1-\frac{\kappa}{2}+it) \rvert^2 \cosh(\pi t)}
    \frac{\cosh(\pi t)\cosh(\pi r)}{\sinh(\pi t)^2+\cosh(\pi r)^2} dt.
	\end{align*}
  The second expression shows clearly that $H(t,T) \ge 0$ for $T \ge 1$ and $r \in \mathbb{R}$ or $r
  \in [-\frac{1}{4}i,\frac{1}{4}i]$.
  We furthermore claim that $H_{\kappa}(r, T) \gg (1+|r|)^{\kappa}$ in the indicated ranges assuming
  additionally that $|r|\le T$.
  Indeed, Stirling's approximation for the Gamma function yields
	\begin{equation*}
    \frac{\pi t}{ \lvert \Gamma(1-\frac{\kappa}{2}+it) \rvert^2 \cosh(\pi t)} = t^{\kappa}(1+O(t^{-1})),
	\end{equation*}
	for $t \gg 1$. Hence, the integrand from $[\frac{1}{2},1]$ is $\gg 1$ if $\lvert r \rvert\le 1$
  and the integrand from $[\lvert r \rvert-\frac{1}{2},\lvert r \rvert]$ is $\gg \lvert r
  \rvert^{\kappa}$ if $1 \le \lvert r \rvert \le T$.
  This proves the desired lower bound.
  For the upper bound, we use Propositions~\ref{prop:appendix-Kloos-bound}
  and~\ref{prop:appendix-oscill-int}.
  We have
	\begin{multline*}
		\sum_{c \equiv 0 \, (N)} \frac{|K_{\ell}(m,m;c;\chi)|}{c} \min\left\{ \left(\frac{|m|}{c}\right)^{1/2}, \left(\frac{|m|}{c}\right)^{1+o(1)} \right\}
		\\ \ll (m,N)^{1/2}N^{o(1)} \sum_{e \, \mid \, m} e^{o(1)} \sum_{c=1}^{\infty} \frac{1}{c^{1/2-o(1)}} \min\left\{ \left(\frac{|m|}{Nc}\right)^{1/2}, \left(\frac{|m|}{Nc}\right)^{1+o(1)} \right\}  \\
		\ll (m,N)^{1/2}  \frac{|m|^{1/2}}{N^{1/2}} |mN|^{o(1)}.
    \qedhere
	\end{multline*}
\end{proof}

\subsection{Technical proofs}\label{ssec:technical}

Finally, we give the technical details of the proofs we omitted above.

\subsection*{Proof of Proposition~\ref{prop:appendix-Kloos-bound}}

It is useful to introduce the related Dirichlet-twisted Sali\'e sums
\begin{equation*}
  S(m,n;c;\chi)
  :=
  \sum_{ad \equiv 1 \bmod (c)}
  \overline{\chi(d)} \legendre{d}{c} e\left( \frac{ma+nd}{c} \right),
\end{equation*}
where $m,n \in \mathbb{Z}$, $c \in \mathbb{N}$, $N \mid c$, and $v_2(c) \neq 1$.
We split the proof of Proposition~\ref{prop:appendix-Kloos-bound} into several
smaller Lemmas.

Lemma 2 of~\cite{Iwaniec87} showed a twisted multiplicativity relation between Kloosterman and
Sali\'e sums when $\chi$ is trivial.
With small adjustments, we obtain the following lemma.

\begin{lemma}\label{lem:appendix-Kloost-twist-mult}
  Let $r,s \in \mathbb{N}$ be two relatively prime integers with $N \mid rs$ and $4 \mid s$.
  Suppose the Dirichlet character $\chi$ modulo $N$ factors as $\chi_r$ modulo $(N,r)$ times
  $\chi_s$ modulo $(N,s)$, and suppose $\overline{r},\overline{s} \in \mathbb{Z}$ are integers
  satisfying $\overline{r}r+\overline{s}s=1$.
  Then, we have
	\begin{equation}\label{eq:twist-Kloost-twist-relation}
		K_{\ell}(m,n;rs;\chi)
    =
    S(m\overline{s},n\overline{s};r;\chi_r) K_{\ell+r-1}(m\overline{r},n\overline{r};s;\chi_s).
	\end{equation}
\end{lemma}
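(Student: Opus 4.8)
The plan is to follow the proof of Lemma~2 of~\cite{Iwaniec87}, inserting the bookkeeping needed to track the factorization $\chi = \chi_r \chi_s$. Throughout, observe that $(r,s)=1$ together with $4\mid s$ forces $r$ to be odd and forces every $d$ coprime to $rs$ to be odd; moreover $(N,r)$ and $(N,s)$ are coprime with product $(N,rs)=N$, so the claimed splitting of $\chi$ is the natural one. First I would set up the Chinese Remainder Theorem: residues $d \bmod rs$ coprime to $rs$ correspond bijectively to pairs $(d_1 \bmod r,\, d_2 \bmod s)$ with $d\equiv d_1\ (r)$ and $d\equiv d_2\ (s)$, and under this correspondence the inverse $a\equiv \overline d\ (rs)$ maps to $(\overline{d_1} \bmod r,\, \overline{d_2} \bmod s)$. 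The additive character splits using $\tfrac{1}{rs}\equiv \tfrac{\overline s}{r}+\tfrac{\overline r}{s}\pmod 1$ (which follows from $\overline r r+\overline s s=1$), so that, after reducing the exponents modulo $r$ and $s$ respectively,
\[
  e\!\left(\frac{ma+nd}{rs}\right)
  = e\!\left(\frac{m\overline s\,\overline{d_1}+n\overline s\,d_1}{r}\right)
    e\!\left(\frac{m\overline r\,\overline{d_2}+n\overline r\,d_2}{s}\right),
\]
while the character splits as $\overline{\chi(d)} = \overline{\chi_r(d_1)}\,\overline{\chi_s(d_2)}$ since $\chi_r$ has modulus dividing $r$ and $\chi_s$ modulus dividing $s$.

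It then remains to split the factor $\epsilon_d^{\ell}\legendre{rs}{d}$. Since $4\mid s$ we have $d\equiv d_2\pmod 4$, hence $\epsilon_d=\epsilon_{d_2}$. Writing $\legendre{rs}{d}=\legendre{r}{d}\legendre{s}{d}$ and applying quadratic reciprocity to the odd Jacobi symbol $\legendre{r}{d}$ gives
\[
  \legendre{r}{d}
  = (-1)^{\frac{r-1}{2}\cdot\frac{d-1}{2}}\legendre{d}{r}
  = \epsilon_d^{\,r-1}\legendre{d_1}{r},
\]
using $\epsilon_d^2=\legendre{-1}{d}=(-1)^{(d-1)/2}$ and $d\equiv d_1\ (r)$; and since $4\mid s$ the Kronecker symbol $\legendre{s}{\cdot}$ is periodic modulo $s$ on odd arguments, so $\legendre{s}{d}=\legendre{s}{d_2}$. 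Collecting these identities, $\epsilon_d^{\ell}\legendre{rs}{d}=\legendre{d_1}{r}\cdot\epsilon_{d_2}^{\,\ell+r-1}\legendre{s}{d_2}$, and the double sum over $(d_1,d_2)$ factors exactly as
\[
  \Big(\sum_{d_1} \overline{\chi_r(d_1)}\legendre{d_1}{r}
    e\big(\tfrac{m\overline s\,\overline{d_1}+n\overline s\,d_1}{r}\big)\Big)
  \Big(\sum_{d_2} \epsilon_{d_2}^{\,\ell+r-1}\overline{\chi_s(d_2)}\legendre{s}{d_2}
    e\big(\tfrac{m\overline r\,\overline{d_2}+n\overline r\,d_2}{s}\big)\Big),
\]
which is precisely $S(m\overline s,n\overline s;r;\chi_r)\,K_{\ell+r-1}(m\overline r,n\overline r;s;\chi_s)$ by the definitions in~\eqref{eq:appendix-Kloosterman-def} and of the twisted Sali\'e sum (with $a_i=\overline{d_i}$).

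I expect the main obstacle to be the delicate sign and $\epsilon$ bookkeeping in the third step: one must confirm that the reciprocity factor $(-1)^{\frac{r-1}{2}\frac{d-1}{2}}$ collapses to $\epsilon_d^{\,r-1}$ (checking the cases $r\equiv 1$ and $r\equiv 3\bmod 4$ against $\epsilon_d^4=1$), that the various Kronecker symbols are genuinely well-defined on the claimed residue classes (using $[4,N]\mid rs$, $4\mid s$, and the oddness of $d$ for the relevant periodicities), and that the Sali\'e sum's convention $\legendre{d_1}{r}$ versus the Kloosterman sum's $\legendre{s}{d_2}$ is matched correctly after the reciprocity flip. Beyond this careful accounting I anticipate no real difficulty, as the CRT factorization and the additive-character split are entirely standard.
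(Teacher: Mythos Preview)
Your proof is correct and follows essentially the same approach as the paper: both use the CRT parametrization of $d\bmod rs$, split the additive character via $\tfrac{1}{rs}\equiv\tfrac{\overline s}{r}+\tfrac{\overline r}{s}$, factor $\chi$ through its $r$- and $s$-parts, and handle $\epsilon_d^{\ell}\legendre{rs}{d}$ by combining $\epsilon_d=\epsilon_{d_2}$ (from $4\mid s$) with quadratic reciprocity to produce the extra $\epsilon_{d_2}^{r-1}$ and flip $\legendre{r}{d}$ to $\legendre{d_1}{r}$. The paper's write-up is terser (it uses the explicit CRT representative $d=x\overline r r+y\overline s s$ and records the reciprocity step in one line), but the logical content is identical.
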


\begin{proof}
  We write $d=x\overline{r}r +y \overline{s}s$, where $y$ runs over a representative system modulo
  $r$ with $(y,r)=1$ and likewise $x$ modulo $s$ with $(x,s)=1$.
  Then, we have $\epsilon_d= \epsilon_x$, $\chi(d)= \chi_r(y)\chi_s(x)$ and by quadratic reciprocity
  \begin{equation*}
    \legendre{rs}{d}
    =
    (-1)^{\frac{x-1}{2}\frac{r-1}{2}} \legendre{y}{r} \legendre{s}{x}
    =
    \epsilon_x^{r-1} \legendre{y}{r} \legendre{s}{x}.
  \end{equation*}
	The sum $K_{\ell}(m,n;rs;\chi)$ now factors as
	\begin{equation*}
	  \sumstar_{y \, (r)} \overline{\chi_r(y)}
    \legendre{y}{r}
    e\left( \frac{m\overline{s}\overline{y}+ n\overline{s} y}{r} \right)
    \sumstar_{x \, (s)} \epsilon_x^{\ell+r-1} \overline{\chi_s(x)}
    \legendre{s}{x}
    e\left( \frac{m\overline{r}\overline{x}+ n\overline{r} x}{s} \right),
	\end{equation*}
  where the $\star$ in the sum indicated that we are only summing over residues relatively prime to
  the modulus, $\overline{y}$ modulo $r$ is such that $\overline{y}y \equiv 1 \bmod (r)$, and
  $\overline{x}$ modulo $s$ is such that $\overline{x}x \equiv 1 \bmod (s)$.
  This completes the proof.
\end{proof}

\begin{lemma}\label{lem:appendix-Salie-twist-mult}
  Let $r,s \in \mathbb{N}$ be two relatively prime integers with $N \mid rs$ and $v_2(rs) \neq 1$.
  Suppose the Dirichlet character $\chi$ modulo $N$ factors as $\chi_r$ modulo $(N,r)$ times
  $\chi_s$ modulo $(N,s)$ and $\overline{r},\overline{s} \in \mathbb{Z}$ are integers satisfying
  $\overline{r}r+\overline{s}s=1$. Then, we have
	\begin{equation}\label{eq:twist-Salie-twist-relation}
		S(m,n;rs;\chi) = S(m\overline{s},n\overline{s};r;\chi_r) S(m\overline{r},n\overline{r};s;\chi_s).
	\end{equation}
\end{lemma}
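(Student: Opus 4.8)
The plan is to repeat the proof of Lemma~\ref{lem:appendix-Kloost-twist-mult} almost verbatim; in fact it is strictly simpler here, since the Kronecker symbol is completely multiplicative in its lower entry, so $\legendre{d}{rs} = \legendre{d}{r}\legendre{d}{s}$ holds outright, with no appeal to quadratic reciprocity and no $\epsilon_d$-factors to track.

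First I would reparametrise the summation variable via the Chinese Remainder Theorem, writing $d \equiv x\overline{r}r + y\overline{s}s \pmod{rs}$, where $y$ runs over a reduced residue system modulo $r$ and $x$ over one modulo $s$; since $\overline{r}r + \overline{s}s = 1$ this is a bijection onto the reduced residues modulo $rs$. From the same relation one reads off $d \equiv y \pmod r$ and $d \equiv x \pmod s$, hence also $a \equiv \overline{y} \pmod r$ and $a \equiv \overline{x} \pmod s$ for the inverse $a \equiv d^{-1}$, and dividing by $rs$ gives $\tfrac{1}{rs} = \tfrac{\overline{s}}{r} + \tfrac{\overline{r}}{s}$, so that after reducing each argument modulo the relevant modulus the additive character factors as
\[
  e\!\left(\tfrac{ma+nd}{rs}\right)
  = e\!\left(\tfrac{m\overline{s}\,\overline{y}+n\overline{s}\,y}{r}\right)\,
    e\!\left(\tfrac{m\overline{r}\,\overline{x}+n\overline{r}\,x}{s}\right).
\]
Using the hypothesised factorisation $\chi = \chi_r\chi_s$ into characters of coprime moduli $(N,r)$, $(N,s)$ gives $\overline{\chi(d)} = \overline{\chi_r(y)}\,\overline{\chi_s(x)}$, while $\legendre{d}{rs} = \legendre{d}{r}\legendre{d}{s} = \legendre{y}{r}\legendre{x}{s}$. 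The sum thus factors as a product of a sum over $y \bmod r$ and a sum over $x \bmod s$; after relabelling $y \mapsto d'$, $\overline{y} \mapsto a'$ (and likewise for $x$) these are exactly $S(m\overline{s},n\overline{s};r;\chi_r)$ and $S(m\overline{r},n\overline{r};s;\chi_s)$, which is the claimed identity. (One checks readily that the conditions on the modulus required for each of the three Salié sums to be defined follow from $N \mid rs$, $(r,s) = 1$, and $v_2(rs) \ne 1$.)

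The one step requiring care is the equality $\legendre{d}{r} = \legendre{y}{r}$ (and its analogue for $s$), which uses that $\legendre{\cdot}{r}$ has period dividing $r$; this holds precisely when $v_2(r) \ne 1$, and is where the hypothesis $v_2(rs) \ne 1$ enters: together with $(r,s) = 1$ it forces $v_2(r) \ne 1$ and $v_2(s) \ne 1$, exactly as the analogous parity condition is used in~\cite[Lemma 2]{Iwaniec87}. Everything else is routine Chinese Remainder Theorem bookkeeping.
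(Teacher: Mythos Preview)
Your proposal is correct and follows exactly the same approach as the paper's proof, which simply says the argument is analogous to Lemma~\ref{lem:appendix-Kloost-twist-mult} and records the key identity $\legendre{d}{rs} = \legendre{d}{r}\legendre{d}{s} = \legendre{y}{r}\legendre{x}{s}$. You have supplied considerably more detail than the paper does, in particular making explicit where the hypothesis $v_2(rs)\neq 1$ is used (for periodicity of $\legendre{\cdot}{r}$ and $\legendre{\cdot}{s}$), which the paper leaves implicit.
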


\begin{proof}
  The proof is analoguous, we just need to note that for $d=x\overline{r}r +y \overline{s}s$ as
  before we have
  \begin{equation*}
    \legendre{d}{rs} = \legendre{d}{r} \legendre{d}{s} = \legendre{y}{r} \legendre{x}{s}.%
    \qedhere   
  \end{equation*}
\end{proof}

\begin{lemma}\label{lem:appendix-Salie-bound}
  Let $p$ be an odd prime and $c=p^{\alpha}$.
  Further, let $\chi$ be a Dirichlet character of conductor $\mathfrak{c}_{\chi}=p^{\gamma}$ with
  $\gamma \le \alpha$.
  Then, for any two integers $m,n \in \mathbb{Z}$, we have
	\begin{equation*}
	\lvert S(m,n;c;\chi)\rvert
  \le
  \tau(c) (m,n,c)^{1/2} c^{1/2} \mathfrak{c}_{\chi}^{1/2}.
	\end{equation*}
\end{lemma}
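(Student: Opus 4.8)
The plan is to treat $S(m,n;c;\chi)$, with $c=p^{\alpha}$ and $p$ odd, as a Dirichlet-twisted Salié sum and to reduce it — by extracting a common factor and by a Gauss-sum expansion of $\chi$ — to the classical bound for the corresponding \emph{untwisted} Salié and Kloosterman sums. Recall that for any odd prime power $q$, any character $\xi$ on $(\mathbb{Z}/q\mathbb{Z})^{\times}$ which is trivial or quadratic, and any $a,b\in\mathbb{Z}$, one has
\[
  \Bigl\lvert\, \sumstar_{d\bmod q}\, \xi(d)\, e\Bigl(\tfrac{ad+b\overline{d}}{q}\Bigr)\, \Bigr\rvert
  \le \tau(q)\,(a,b,q)^{1/2}\,q^{1/2},
\]
the Estermann evaluation when $\xi$ is trivial and Salié's evaluation when $\xi$ is quadratic (cf.~\cite[Lemma 2]{Iwaniec87} and the classical references therein). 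Our twisted sum will be written as a short average of such sums, with the conductor of $\chi$ entering only through the size $\lvert\tau(\chi)\rvert=\mathfrak{c}_{\chi}^{1/2}$ of its Gauss sum.

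\emph{Reduction to $(m,p)=1$.} Swapping the roles of the two variables of summation gives $S(m,n;c;\chi)=S(n,m;c;\overline{\chi})$ (using $\legendre{a}{c}=\legendre{d}{c}$ when $ad\equiv1\bmod c$ and $c$ is odd), and $\overline{\chi}$ has the same conductor $p^{\gamma}$; so we may assume $v_{p}(m)\le v_{p}(n)$. Put $e=\min(v_{p}(m),\alpha)$, so that $(m,n,c)=p^{e}$. If $e=\alpha$, or if $\gamma>\alpha-e$, the crude bound $\lvert S\rvert\le\phi(c)\le p^{\alpha}$ already proves the lemma (directly when $e=\alpha$, since the target exceeds $p^{\alpha}$; and because $p^{(\alpha-e-\gamma)/2}<1$ when $\gamma>\alpha-e$). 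Otherwise $e<\alpha$ and $\gamma\le\alpha-e$; writing $m=p^{e}m'$ and $n=p^{e}n'$, the three factors $\overline{\chi(d)}$, $\legendre{d}{p}^{\alpha}$, and $e\bigl(\tfrac{m'\overline{d}+n'd}{p^{\alpha-e}}\bigr)$ all depend only on $d\bmod p^{\alpha-e}$, so collapsing the $d$-sum into residue classes modulo $p^{\alpha-e}$ gives $S(m,n;c;\chi)=p^{e}\,\widetilde{S}$, where $\widetilde{S}$ is a sum of exactly the same shape: a $\chi$-twisted sum modulo $p^{\alpha-e}$, carrying a quadratic-or-trivial symbol (possibly of the opposite parity, which is immaterial below), with the \emph{unit} coefficient $m'$. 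Since $\tau(p^{\alpha-e})\le\tau(p^{\alpha})$, a bound $\lvert\widetilde{S}\rvert\le\tau(p^{\alpha-e})\,p^{(\alpha-e)/2}\,\mathfrak{c}_{\chi}^{1/2}$ yields the lemma, so it suffices to treat the case $(m,p)=1$, equivalently $(m,n,c)=1$.

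\emph{The case $(m,p)=1$.} If $\gamma=0$ then $\chi$ is trivial and $S$ is itself an untwisted Salié or Kloosterman sum, so the displayed bound gives $\lvert S\rvert\le\tau(c)\,c^{1/2}$ at once. If $\gamma\ge1$, use that $\chi$ is primitive modulo $p^{\gamma}$ to write, for every $d$, $\overline{\chi(d)}=\tau(\chi)^{-1}\sum_{h\bmod p^{\gamma}}\chi(h)\,e(hp^{\alpha-\gamma}d/p^{\alpha})$ with $\lvert\tau(\chi)\rvert=p^{\gamma/2}=\mathfrak{c}_{\chi}^{1/2}$. Substituting and interchanging summations,
\[
  S(m,n;c;\chi)=\frac{1}{\tau(\chi)}\sum_{h\bmod p^{\gamma}}\chi(h)\,T_{h},
  \qquad
  T_{h}=\sumstar_{d\bmod p^{\alpha}}\legendre{d}{p}^{\alpha}\,e\Bigl(\tfrac{m\overline{d}+(n+hp^{\alpha-\gamma})d}{p^{\alpha}}\Bigr),
\]
where each $T_{h}$ is an untwisted Salié/Kloosterman sum modulo $p^{\alpha}$ whose $\overline{d}$-coefficient is $m$. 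As $(m,p)=1$, we have $(n+hp^{\alpha-\gamma},m,p^{\alpha})=1$, so the displayed bound gives $\lvert T_{h}\rvert\le\tau(p^{\alpha})\,p^{\alpha/2}$ \emph{uniformly} in $h$; hence
\[
  \lvert S(m,n;c;\chi)\rvert\le p^{-\gamma/2}\sum_{h\bmod p^{\gamma}}\lvert\chi(h)\rvert\,\lvert T_{h}\rvert\le p^{-\gamma/2}\cdot p^{\gamma}\cdot\tau(p^{\alpha})\,p^{\alpha/2}=\tau(c)\,c^{1/2}\,\mathfrak{c}_{\chi}^{1/2},
\]
which finishes the reduced case and, with the previous paragraph, the lemma in general.

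\emph{Main obstacle.} No step is deep, but the bookkeeping is unforgiving in two respects. First, in the reduction one must follow the parity of the quadratic symbol: after the modulus drops from $p^{\alpha}$ to $p^{\alpha-e}$ the symbol $\legendre{\cdot}{p}^{\alpha}$ need not equal $\legendre{\cdot}{p}^{\alpha-e}$, so $\widetilde{S}$ can be a twisted Kloosterman sum where $S$ was a twisted Salié sum (or conversely) — harmless only because the quoted classical bound covers both cases with the same constant. Second, one must invoke the classical bound \emph{with} its $(a,b,q)^{1/2}$ factor and correctly identify $\lvert\tau(\chi)\rvert$ with $\mathfrak{c}_{\chi}^{1/2}$: a careless version of either would drop the factor $(m,n,c)^{1/2}$ or $\mathfrak{c}_{\chi}^{1/2}$ from the final estimate. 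The separability of the Gauss sum of a primitive character — which makes the expansion of $\overline{\chi(d)}$ valid for \emph{every} $d$, not merely for $d$ coprime to $q$ — is the classical fact most easily misused here.
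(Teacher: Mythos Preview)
Your argument is correct and reaches the same bound, but the route differs from the paper's. The paper first invokes \cite[Theorem~9.3]{KnightlyLi13} as a black box to obtain $\lvert S(m,n;c;\chi)\rvert \le \tau(c)(m,n,c)^{1/2}c^{1/2}\max\{\mathfrak{c}_\chi,p\}^{1/2}$, which already suffices unless $\chi$ is trivial; it then treats the trivial-$\chi$ case by extracting the common power of $p$ and appealing to further Knightly--Li propositions for the residual untwisted Sali\'e/Kloosterman sums. You instead reduce first to $(m,p)=1$ (handling the edge cases $e=\alpha$ and $\gamma>\alpha-e$ by the trivial bound), and then for $\gamma\ge 1$ open $\overline{\chi}$ through its Gauss sum to write $S$ as a short average over $h\bmod p^{\gamma}$ of untwisted sums $T_h$, each with unit gcd because $(m,p)=1$. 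Your approach is more self-contained---it never needs the general twisted-character bound---and makes the factor $\mathfrak{c}_\chi^{1/2}=\lvert\tau(\chi)\rvert$ appear transparently from the Gauss-sum normalization; the paper's approach is shorter because the heavy lifting is outsourced to the reference. One minor remark: the ``classical bound'' you quote for the untwisted sum carrying a lifted quadratic symbol modulo $p^{\alpha}$ with $\alpha\ge 2$ is precisely what the paper extracts from \cite[Propositions~9.4, 9.7, 9.8]{KnightlyLi13}, so your pointer to \cite{Iwaniec87} alone is a little thin at that step.
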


\begin{proof} Applying~\cite[Thm 9.3]{KnightlyLi13} yields the bound
	\begin{equation*}
    \lvert S(m,n;c;\chi) \rvert \le \tau(c) (m,n,c)^{1/2} c^{1/2} \max\{\mathfrak{c}_{\chi},p\}^{1/2}.
	\end{equation*}
  Thus it only remains to consider when $\chi$ is trivial.
  Writing $p^{\delta}=(m,n,c)$,
	\begin{equation*}
    S(m,n;c;\chi)
    =
    p^{\delta}  S(m/p^{\delta},n/p^{\delta};c/p^{\delta}; \slegendre{\cdot}{p}^{\delta}).
	\end{equation*}
  Thus, we may assume $(m,n,c)=1$ from now on.
  If either $c \mid m$ or $c \mid n$, then $S(m,n;c;\legendre{\cdot}{p}^{\delta})$ is either a
  Ramanujan sum or a Gau{\ss} sum.
  In either case, we have
	\begin{equation*}
	\lvert S(m,n;c;\slegendre{\cdot}{p}^{\delta}) \rvert \le 2 c^{1/2}.
	\end{equation*}
	The remaining cases follow from~\cite[Propositions 9.4, 9.7, 9.8]{KnightlyLi13}.
\end{proof}

\begin{lemma}\label{lem:appendix-Kloos-2-bound}
  Let $c=2^{\alpha}$ with $\alpha \ge 2$ and $\ell \in \mathbb{Z}$ an odd integer.
  Further, let $\chi$ be a Dirichlet character of conductor $\mathfrak{c}_{\chi}=2^{\gamma}$ with
  $\gamma \le \alpha$.
  Then, for any two integers $m,n \in \mathbb{Z}$, we have
	\begin{equation*}
    \lvert K_{\ell}(m,n;c;\chi) \rvert
    \le
    4 \tau(c) (m,n,c)^{1/2} c^{1/2} \mathfrak{c}_{\chi}^{1/2}.
	\end{equation*}
\end{lemma}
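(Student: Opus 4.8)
The plan is to prove Lemma~\ref{lem:appendix-Kloos-2-bound} by an explicit $2$-adic computation: strip off the trivial degeneracies and then reduce to a single Sali\'e-type sum modulo $2^{\alpha}$ whose size is governed by the classical evaluation of quadratic Gauss sums. First I would normalize the parameters. Setting $2^{\delta}=(m,n,c)$, one checks as in the proof of Lemma~\ref{lem:appendix-Salie-bound} that
\[
K_{\ell}(m,n;c;\chi)=2^{\delta}\,K_{\ell}\bigl(m/2^{\delta},\,n/2^{\delta};\,c/2^{\delta};\,\chi^{\ast}\bigr)
\]
for a character $\chi^{\ast}$ differing from $\chi$ by a Kronecker-symbol factor of the same $2$-conductor; since $(2^{\alpha-\delta})^{1/2}=c^{1/2}2^{-\delta/2}$, this is exactly what produces the factor $(m,n,c)^{1/2}$, and it reduces us to $(m,n,c)=1$ (with $c/2^{\delta}\in\{1,2\}$ immediate). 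Next, using $\epsilon_{d}^{2}=\legendre{-1}{d}$ for odd $d$, I would write $\epsilon_{d}^{\ell}=\epsilon_{d}\legendre{-1}{d}^{(\ell-1)/2}$, absorbing the parity of $(\ell-1)/2$ into the character and reducing to $\ell=1$ at the cost of inflating the $2$-conductor of $\chi$ by a factor at most $4$ (still a divisor of $2^{\alpha}$ since $\alpha\ge 2$).

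With $\ell=1$ and $(m,n,c)=1$ we have
\[
K_{1}(m,n;c;\chi)=\sum_{\substack{d\bmod c\\ (d,2)=1}}\epsilon_{d}\,\psi(d)\,e\!\left(\frac{m\overline d+nd}{c}\right),
\]
where $\psi(d)=\legendre{c}{d}\overline{\chi(d)}$ is a Dirichlet character whose $2$-conductor divides $2^{\max(\gamma,3)}\mid 2^{\alpha}$ and $\overline d$ is the inverse of $d$ modulo $c$. Since at least one of $m,n$ is odd, there are two cases. If $c\mid m$ or $c\mid n$, the substitution $d\mapsto\overline d$ collapses the sum to a generalized Gauss sum $\sum_{d\bmod c}\epsilon_{d}\psi(\overline d)e(\nu d/c)$, where $\nu$ is whichever of $m,n$ is not divisible by $c$, necessarily a unit by the gcd hypothesis; such a sum vanishes unless the function $d\mapsto\epsilon_{d}\psi(\overline d)$ has minimal period $2^{\alpha}$ — which forces $\mathfrak{c}_{\chi}\asymp c$ — and otherwise it is a Gauss sum of size $\ll c^{1/2}$, so in all cases the bound of the required shape holds. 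Otherwise some variable, say $n$, is a unit, and $d\mapsto\overline n d$ transforms $K_{1}$, up to an explicit unimodular factor depending only on $n\bmod 8$, into a $2$-adic Sali\'e sum
\[
S=\sum_{\substack{d\bmod 2^{\alpha}\\ (d,2)=1}}\epsilon_{d}\,\psi'(d)\,e\!\left(\frac{\overline d+Ld}{2^{\alpha}}\right),\qquad L\equiv mn\!\!\!\pmod{2^{\alpha}},
\]
with $\psi'$ of the same conductor as $\psi$. When $L$ is even but nonzero, extracting the $2$-power dividing $L$ together with a further substitution reduces $S$ either to the same shape with a smaller modulus or to a Gauss sum, and the result is strictly smaller; so I would concentrate on $L$ a unit.

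For $S$ with $L$ a unit I would run the $2$-adic analogue of stationary phase. The critical points of the phase $\overline d+Ld$ are the solutions of $d^{2}\equiv L^{-1}\pmod{2^{\alpha}}$, so $S$ vanishes unless $L^{-1}$ is a square; otherwise, expanding $d=d_{0}(1+2^{\beta}u)$ about each square root $d_{0}$, the linear term vanishes at $d_{0}$ and the quadratic Taylor term turns the inner sum over $u$ into a quadratic Gauss sum modulo a power of $2$, which is either zero or of modulus the square root of the length of that inner range, while $\epsilon_{d}$ and $\psi'(d)$ are constant on the relevant cosets and factor out as unimodular constants. Summing the contributions of the (at most four) square roots of $L^{-1}$ modulo $2^{\alpha}$ gives $S\ll\tau(2^{\alpha})\,2^{\alpha/2}\,\mathfrak{c}_{\psi'}^{1/2}$; undoing the substitutions, and accounting for the two conductor inflations together with the absolute constant from this last step, yields $\lvert K_{\ell}(m,n;c;\chi)\rvert\le 4\,\tau(c)\,(m,n,c)^{1/2}\,c^{1/2}\,\mathfrak{c}_{\chi}^{1/2}$.

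The main obstacle is this final step. At the prime $2$ the squaring map is four-to-one on units, square roots of a given class exist only under congruence conditions sensitive to the parity of $\alpha$, and quadratic Gauss sums modulo $2^{\beta}$ behave irregularly for small $\beta$; reconciling all of this with the extra factors $\epsilon_{d}$ (which sees only $d\bmod 4$) and $\legendre{2}{d}^{\alpha}$ (which sees $d\bmod 8$), and checking that the accumulated constants genuinely do not exceed $4$, is where the care lies. The remaining bookkeeping runs in parallel with the odd-prime case treated in Lemma~\ref{lem:appendix-Salie-bound}.
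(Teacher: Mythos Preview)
Your route is workable in principle but is far more elaborate than what the paper does, and you leave the genuinely hard step (the $2$-adic stationary phase with its irregular small-$\alpha$ behaviour and constant tracking) as an acknowledged sketch. The paper bypasses all of this with a one-line observation: for odd $\ell$ one has the exact identity
\[
\epsilon_d^{\ell} \;=\; \frac{1+i^{\ell}}{2} \;+\; \frac{1-i^{\ell}}{2}\legendre{-1}{d},
\]
so $K_{\ell}(m,n;c;\chi)$ splits as a linear combination (with coefficients of modulus $1/\sqrt{2}$) of two \emph{ordinary} twisted Kloosterman sums, each twisted by a Dirichlet character of conductor dividing $\max\{\mathfrak{c}_{\chi},8\}$. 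At that point one simply cites the general Weil-type bound for twisted Kloosterman sums (Knightly--Li, Theorem~9.3), obtaining
\[
\lvert K_{\ell}(m,n;c;\chi)\rvert \le \sqrt{2}\,\tau(c)\,(m,n,c)^{1/2} c^{1/2} \max\{\mathfrak{c}_{\chi},8\}^{1/2},
\]
and $\sqrt{2}\,\max\{\mathfrak{c}_{\chi},8\}^{1/2}\le 4\,\mathfrak{c}_{\chi}^{1/2}$ gives the stated constant.

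Compared to your plan, this trades your entire stationary-phase computation for a single citation; it also handles the gcd reduction, the degenerate cases $c\mid m$ or $c\mid n$, and the constant $4$ all at once inside the cited theorem, rather than having to track them separately. Your reduction $\epsilon_d^{\ell}=\epsilon_d\legendre{-1}{d}^{(\ell-1)/2}$ goes only halfway: it still leaves one factor of $\epsilon_d$ in the sum, which is why you are forced into the explicit $2$-adic analysis. Expanding $\epsilon_d^{\ell}$ \emph{fully} in characters (the key point being that $\epsilon_d$ itself, while not multiplicative, is a linear combination of the trivial character and $\legendre{-1}{\cdot}$) removes that obstruction entirely.
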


\begin{proof}
  We expand $\epsilon_d^{\ell} = \frac{1+i^{\ell}}{2}+\frac{1-i^{\ell}}{2} \legendre{-1}{d}$ in
  terms of Dirichlet characters and apply~\cite[Theorem 9.3]{KnightlyLi13} to get
	\begin{equation*}
    \lvert K_{\ell}(m,n;c;\chi) \rvert
    \le
    \sqrt{2} \tau(c) (m,n,c)^{1/2} c^{1/2} \max\{\mathfrak{c}_{\chi},8\}^{1/2}.
    \qedhere
	\end{equation*}
\end{proof}

Combining the four previous lemmas gives Proposition~\ref{prop:appendix-Kloos-bound}.

\subsection*{Proof of Proposition~\ref{prop:appendix-oscill-int}}

We begin with some classical bounds on oscillatory integrals and Bessel functions.

\begin{lemma}\label{lem:appendix-2-derivative-est}
  Let $g(x)$ is a real valued and smooth function on an interval $(a,b)$, continuous at the
  endpoints, and with $\lvert g^{(k)}(x) \rvert \gg \lambda >0$ on $(a,b)$ for some integer $k\ge
  1$.
  In the case $k=1$, assume also that $g$ is monotonic.
  Furthermore, let $f(x)$ a continuously differentiable function on the closed interval $[a,b]$,
  then
	\begin{equation*}
	\int_a^b f(x) e^{i g(x)} dx
  \ll
  \lambda^{-1/k} \left( \lvert f(b) \rvert+\int_a^b \lvert f'(x) \rvert dx \right).
  \end{equation*}
\end{lemma}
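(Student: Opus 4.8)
The plan is to reduce to the case $f\equiv 1$ and then prove the resulting inequality --- which is exactly van der Corput's lemma --- by induction on $k$. For the reduction, I would write $f(x)=f(b)-\int_x^b f'(t)\,dt$ and exchange the order of integration in the resulting double integral, obtaining
\[
  \int_a^b f(x)e^{ig(x)}\,dx
  = f(b)\int_a^b e^{ig(x)}\,dx - \int_a^b f'(t)\Big(\int_a^t e^{ig(x)}\,dx\Big)\,dt .
\]
Thus it suffices to show
\[
  \Big| \int_\alpha^\beta e^{ig(x)}\,dx \Big| \ll_k \lambda^{-1/k}
\]
uniformly over all subintervals $[\alpha,\beta]\subseteq[a,b]$: the hypotheses on $g$ (the lower bound on $|g^{(k)}|$, and the monotonicity of $g'$ in the case $k=1$) are inherited by every subinterval, and the displayed identity then gives $|\int_a^b f e^{ig}| \ll_k \lambda^{-1/k}\big(|f(b)|+\int_a^b|f'|\big)$ at once.

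For the base case $k=1$, one has $g'$ monotonic of constant sign with $|g'|\ge\lambda$, so $1/g'$ is monotonic with $|1/g'|\le\lambda^{-1}$. Integrating by parts,
\[
  \int_\alpha^\beta e^{ig(x)}\,dx
  = \Big[ \frac{e^{ig(x)}}{ig'(x)} \Big]_\alpha^\beta
  - \int_\alpha^\beta e^{ig(x)} \frac{d}{dx}\Big( \frac{1}{ig'(x)} \Big)\,dx ,
\]
the boundary term is $\le 2\lambda^{-1}$, and since $(1/g')'$ does not change sign the remaining integral is at most $|1/g'(\beta)-1/g'(\alpha)|\le 2\lambda^{-1}$ in modulus. (If one prefers to avoid $g''$, the second mean value theorem for integrals applied separately to the real and imaginary parts yields the same conclusion.) For the inductive step, assume the estimate with $k-1$ in place of $k$ and $|g^{(k)}|\ge\lambda$ on $(a,b)$. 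Then $g^{(k-1)}$ is strictly monotonic; let $c\in[\alpha,\beta]$ be the point where $|g^{(k-1)}|$ is smallest --- either its unique zero or an endpoint --- so that $|g^{(k-1)}(x)|\ge\lambda|x-c|$ on $[\alpha,\beta]$ by the mean value theorem. For $\delta>0$, bound the contribution of $[\alpha,\beta]\cap[c-\delta,c+\delta]$ trivially by $2\delta$, and on each of $[\alpha,c-\delta]$ and $[c+\delta,\beta]$ apply the inductive hypothesis with $|g^{(k-1)}|\ge\lambda\delta$ to get $\ll(\lambda\delta)^{-1/(k-1)}$ (when $k-1=1$ the needed monotonicity of $g'$ is precisely the bound $|g''|=|g^{(k)}|\ge\lambda$). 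Hence $|\int_\alpha^\beta e^{ig}|\ll \delta+(\lambda\delta)^{-1/(k-1)}$, and taking $\delta=\lambda^{-1/k}$ balances the two terms to give $\ll_k\lambda^{-1/k}$, completing the induction.

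This argument is entirely classical (van der Corput's lemma, plus the standard device for inserting the amplitude $f$), so there is no substantive obstacle. The only points needing a little care are the Fubini exchange in the reduction step --- harmless, since $f'$ is integrable and the inner integral is uniformly bounded --- and verifying in the induction that the extra monotonicity hypothesis in the $k-1=1$ instance is actually available, which it is, as it is supplied by the bound $|g''|\ge\lambda$ governing that stage.
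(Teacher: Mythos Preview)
Your proof is correct and is precisely the classical argument for van der Corput's lemma together with the standard amplitude-insertion device; the paper does not give its own proof here but simply cites Stein's \emph{Harmonic Analysis}, Chapter~7, \S1, Proposition~2 and its Corollary, which is exactly the argument you have written out. One small remark: the lemma as stated asks that $g$ be monotonic when $k=1$, but since $\lvert g'\rvert\ge\lambda>0$ already forces this, the intended (and necessary) hypothesis is that $g'$ be monotonic, which is what you use and what the cited reference assumes.
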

\begin{proof}
	See for example~\cite[Chap.\@ 7 \S 1 Prop.\@ 2 \& Cor.]{Stein93}.
\end{proof}

\begin{lemma}\label{lem:appendix-BesselJ-bounds}
We have the following uniform bounds on the $J$-Bessel function for $q \in \mathbb{R}^+$:
	\begin{align*}
		\lvert J_{2it}(q) \rvert
    &\ll
    \cosh(\pi t) \cdot \min\{q^{-1/2}, 1+\lvert \log(q) \rvert\}
    \quad \forall t \in \mathbb{R},
    \\
		\lvert J_{2it}(q)-J_{-2it}(q) \rvert
    &\le
    \lvert \sinh(\pi t) \rvert \cdot \min\{q^{-1/2}, 1+\lvert \log(q) \rvert\}
    \quad \forall t \in \mathbb{R}.
	\end{align*}
\end{lemma}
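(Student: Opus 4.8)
The plan is to prove both estimates by treating the ranges $q \le 1$ and $q \ge 1$ separately: for small $q$ one uses the power series of $J_{2it}$, and for large $q$ one uses Poisson's integral representation together with the van der Corput estimate of Lemma~\ref{lem:appendix-2-derivative-est}. The bound for the difference $J_{2it}(q)-J_{-2it}(q)$ is then deduced from the bound for $J_{2it}(q)$ via the relation $\overline{J_{2it}(q)}=J_{-2it}(q)$ (valid for $q>0$), which exhibits the difference as an odd entire function of $t$. A reader content to quote the literature could instead appeal to Dunster's uniform asymptotics for Bessel functions of purely imaginary order, exactly as Lemma~\ref{lem:uniform-Whittaker-bound} quotes Dunster for the Whittaker function; I spell out the elementary route here since Lemma~\ref{lem:appendix-2-derivative-est} is designed for it.

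For $q \le 1$ I would use $J_{2it}(q)=\sum_{m\ge 0}\frac{(-1)^m(q/2)^{2m+2it}}{m!\,\Gamma(m+1+2it)}$. Since $|\Gamma(m+1+2it)|\ge m!\,|\Gamma(1+2it)|$ and $|\Gamma(1+2it)|^{-2}=\sinh(2\pi t)/(2\pi t)\le\cosh^2(\pi t)$, summing termwise gives $|J_{2it}(q)|\le e^{q^2/4}\cosh(\pi t)\ll\cosh(\pi t)$, which beats the claimed bound here because $\min\{q^{-1/2},1+|\log q|\}\ge 1$ for $q\le 1$. For the difference, the $m$th term of $J_{2it}(q)-J_{-2it}(q)$ equals $\frac{(-1)^m(q/2)^{2m}}{m!}\cdot 2i\,\Im b_m(t)$ with $b_m(t)=(q/2)^{2it}/\Gamma(m+1+2it)$; since $b_m(0)$ is real we have $|\Im b_m(t)|\le|t|\sup_{|s|\le 1}|b_m'(s)|$, and from $b_m'(t)=2i\,b_m(t)\bigl[\log(q/2)-\psi(m+1+2it)\bigr]$ together with $|b_m(t)|\ll 1/m!$ and $|\psi(m+1+2it)|\ll\log(m+2)$ one gets $|\Im b_m(t)|\ll|t|(1+|\log q|)/m!$ for $|t|\le 1$. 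The (convergent) sum over $m$ then yields $|J_{2it}(q)-J_{-2it}(q)|\ll|t|(1+|\log q|)\le|\sinh(\pi t)|(1+|\log q|)$ for $|t|\le 1$; for $|t|\ge 1$ the triangle inequality and the bound for $J_{2it}$ give the same, since $\cosh(\pi t)\ll|\sinh(\pi t)|$ there.

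For $q \ge 1$ I would start from Poisson's formula $J_{2it}(q)=\frac{(q/2)^{2it}}{\sqrt\pi\,\Gamma(\tfrac12+2it)}\int_{-1}^1(1-s^2)^{2it-\frac12}e^{iqs}\,ds$ (valid since $\Re(2it)>-\tfrac12$), which after the substitution $s=\sin\psi$ becomes $|J_{2it}(q)|=\frac{\sqrt{\cosh(2\pi t)}}{\pi}\Bigl|\int_{-\pi/2}^{\pi/2}e^{ig(\psi)}\,d\psi\Bigr|$ with $g(\psi)=q\sin\psi+4t\log\cos\psi$, the prefactor being $\ll\cosh(\pi t)$ by $|\Gamma(\tfrac12+2it)|^2=\pi/\cosh(2\pi t)$. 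Taking $t\ge 0$ without loss of generality (as $|J_{-2it}(q)|=|J_{2it}(q)|$), one checks that $g'(\psi)=q\cos\psi-4t\tan\psi>0$ on $(-\pi/2,0)$, so $g$ is monotone there, while on $(0,\pi/2)$ one has $g''<0$ and hence a single stationary point $\psi_0$, at which $q\cos\psi_0=4t\tan\psi_0$ forces $|g''(\psi_0)|=q(\sin\psi_0+\csc\psi_0)\ge q$. The second-derivative test of Lemma~\ref{lem:appendix-2-derivative-est} on a short interval about $\psi_0$, together with the first-derivative test on the complementary monotone pieces of $(-\pi/2,\pi/2)$, then gives $\int\ll q^{-1/2}$ uniformly in $t\ge 1$, whence $|J_{2it}(q)|\ll\cosh(\pi t)\,q^{-1/2}$; for $|t|\le 1$ the classical bound $J_\mu(q)\ll q^{-1/2}$ (uniform for $\mu$ in a compact set) gives the same. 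For the difference with $q\ge 1$ the triangle inequality handles $|t|\ge 1$, and for $|t|\le 1$ the function $F(t):=J_{2it}(q)-J_{-2it}(q)$, being odd and entire in $t$, has $F(t)/t$ entire and $\ll q^{-1/2}$ by Cauchy's estimate from the same compact-$\mu$ bound, so $|F(t)|\ll|t|\,q^{-1/2}\le|\sinh(\pi t)|\,q^{-1/2}$.

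The step I expect to be the main obstacle is the uniform-in-$t$ bound $\int_{-\pi/2}^{\pi/2}e^{ig(\psi)}\,d\psi\ll q^{-1/2}$ for $q,t\ge 1$: as $t/q$ grows the stationary point $\psi_0$ drifts from near $\pi/2$ toward $0$, and near $\psi=\pm\pi/2$ the phase oscillates ever faster, so the split of $(-\pi/2,\pi/2)$ into the second-derivative window about $\psi_0$ and the first-derivative tails must be made with a $(q,t)$-dependent width (roughly $\min\{q^{-1/2},\cos\psi_0\}$), with the cases $\psi_0\lesssim q^{-1/2}$ and $\tfrac{\pi}{2}-\psi_0\lesssim q^{-1/2}$ absorbed by a direct length estimate. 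This is the same bookkeeping carried out in~\cite[\S 5]{Kuznetsov80} and~\cite[\S 3]{AhlgrenAndersen18}; it is routine but requires care.
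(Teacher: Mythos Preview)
Your argument is correct in outline, but it takes a genuinely different and considerably more laborious route than the paper's.

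The paper works from a single integral representation valid for $\lvert \Re v \rvert < 1$,
\[
J_{v}(q) = \frac{2}{\pi}\int_0^\infty \sin\bigl(q\cosh\xi - \tfrac{\pi}{2}v\bigr)\cosh(v\xi)\,d\xi,
\]
and plugs in $v=2it$. Expanding $\sin(q\cosh\xi - \pi i t)$ immediately factors out $\cosh(\pi t)$ (and, for the difference $J_{2it}-J_{-2it}$, the $\sinh(\pi t)$), reducing both claims to bounding $\int_0^\infty e^{\pm i q\cosh\xi}\cos(2t\xi)\,d\xi$. This is done by chopping $[0,\infty)$ into unit intervals $[\ell,\ell+1]$ and applying the second-derivative test from Lemma~\ref{lem:appendix-2-derivative-est} with $\lambda \asymp q e^{\ell}$; summing $\min\{1,q^{-1/2}e^{-\ell/2}\}$ over $\ell$ gives the full $\min\{q^{-1/2},1+\lvert\log q\rvert\}$ in one stroke, with no split at $q=1$ and no separate argument for the difference.

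By contrast, you split at $q=1$, switch between the power series and Poisson's integral, and then face a stationary-phase problem whose critical point $\psi_0$ migrates across $(0,\pi/2)$ as $t/q$ varies --- exactly the ``bookkeeping'' you flag as the main obstacle. Your treatment of $J_{2it}-J_{-2it}$ via oddness in $t$ and a Cauchy/mean-value estimate is clever but again extra work compared with the paper, where the $\sinh(\pi t)$ factor drops out of the integral representation for free. (A minor remark: in your small-$q$ bound for the difference, the digamma contributes an extra $\log(m+2)$ per term, so the estimate $\lvert\Im b_m(t)\rvert \ll \lvert t\rvert(1+\lvert\log q\rvert)/m!$ is only true after summing over $m$; this is harmless.)

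The practical lesson: for Bessel functions of purely imaginary order, the Schl\"afli-type representation above is the natural choice, because the phase $q\cosh\xi$ is \emph{independent of $t$} and the $t$-dependence sits entirely in the amplitude $\cos(2t\xi)$ and the global $\cosh(\pi t)$/$\sinh(\pi t)$ prefactor. This sidesteps every difficulty you identify with the Poisson integral.
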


\begin{proof}
	We shall use the integral representation (see~\cite[Eq.\@ (12) p.\@ 180]{WatsonBessel})
	\begin{equation}\label{eq:appendix-BesselJ-intrep}
		J_{v}(x) = \frac{2}{\pi} \int_0^{\infty} \sin(x \cosh(\xi)-\tfrac{\pi}{2}v) \cosh(v \xi) d\xi, \quad |\Re(v)|<1, x>0.
	\end{equation}
	It follows that
	\begin{align*}
			\lvert J_{2it}(q) \rvert  &\ll \cosh(\pi t) \sum_{\pm} \left\lvert  \int_0^{\infty} e^{\pm i q
      \cosh(\xi)} \cos(2t\xi) d\xi \right \rvert
      \\
			&\ll \cosh(\pi t) \sum_{\ell=0}^{\infty} \min\{1,q^{-1/2} e^{-\ell/2}\}
      \\
      &\ll \cosh(\pi t) \cdot \min\{q^{-1/2}, 1+\lvert \log(q) \rvert\},
	\end{align*}
  where we have used the second derivative test, Lemma~\ref{lem:appendix-2-derivative-est}, on the
  individual intervals $[\ell,\ell+1]$ for $\ell \in \mathbb{N}_0$.
  Likewise, we have
	\begin{align*}
			\lvert J_{2it}(q)-J_{-2it}(q) \rvert &\ll \lvert \sinh(\pi t) \rvert
      \cdot
      \left \lvert \int_0^{\infty} \cos(q \cosh(\xi)) \cos(2t \xi) d\xi \right \rvert
      \\
			&\ll \lvert \sinh(\pi t) \rvert \sum_{\ell} \min\{1,q^{-1/2} e^{-\ell/2}\}
      \\
			& \ll \lvert \sinh(\pi t) \rvert \cdot \min\{q^{-1/2}, 1+|\log(q)|\}.
      \qedhere
	\end{align*}
\end{proof}

In order to prove Proposition~\ref{prop:appendix-oscill-int}, we begin with an
alternate integral representation of $I_{\kappa}(\omega,t)$. For $\kappa > 0$,
we have~\cite[Eq.\@ (44)]{Proskurin05}\footnote{Correcting a typo in the integral
limits.}
\begin{align*}
	I_{\kappa}(\omega,t) =&
  \frac{2\pi i \omega^{1-\kappa}}{\sinh(2 \pi t)}
  \int_0^{\omega}
  \begin{aligned}[t]
  \Big(
    &J_{2it}(q) \cos\left(\tfrac{\pi}{2} (1-\kappa-2it)\right) \\
    &-
    J_{-2it}(q) \cos\left(\tfrac{\pi}{2} (1-\kappa+2it)\right)
  \Big) q^{\kappa} \frac{dq}{q}
  \end{aligned}
  \\
	=& -\frac{2\pi  \omega^{1-\kappa} \sinh(\pi t)}{\sinh(2 \pi t)}
  \cos(\pi \tfrac{\kappa}{2}) \int_0^{\omega}
  \left(J_{2it}(q) + J_{-2it}(q) \right)
  q^{\kappa} \frac{dq}{q}
  \\
	& + \frac{2\pi i \omega^{1-\kappa} \cosh(\pi t)}{\sinh(2 \pi t)}
  \sin(\pi \tfrac{\kappa}{2})
  \int_0^{\omega} \left(J_{2it}(q)- J_{-2it}(q) \right)
  q^{\kappa} \frac{dq}{q},
\end{align*}
and for $\kappa \le 0$, we have the similar expression
\begin{align*}
	I_{\kappa}(\omega,t)
  =&
  -\frac{2\pi i \omega^{1-\kappa}}{\sinh(2 \pi t)}
  \int_{\omega}^{\infty}
  \begin{aligned}[t]
  \Big(
    &J_{2it}(q) \cos\left(\tfrac{\pi}{2} (1-\kappa-2it)\right)
    \\
    &- J_{-2it}(q) \cos\left(\tfrac{\pi}{2} (1-\kappa+2it)\right)
  \Big)
  q^{\kappa} \frac{dq}{q}
  \end{aligned}
  \\
	=& \frac{2\pi  \omega^{1-\kappa} \sinh(\pi t)}{\sinh(2 \pi t)} \cos(\pi \tfrac{\kappa}{2})
  \int_{\omega}^{\infty} \left(J_{2it}(q) + J_{-2it}(q) \right)    q^{\kappa} \frac{dq}{q}
  \\
	&- \frac{2\pi i \omega^{1-\kappa} \cosh(\pi t)}{\sinh(2 \pi t)} \sin(\pi \tfrac{\kappa}{2})
  \int_{\omega}^{\infty} \left(J_{2it}(q)- J_{-2it}(q) \right)   q^{\kappa} \frac{dq}{q},
\end{align*}
which is proved by contour shifts and Bessel function relations analogous to~\cite[Lemma
3.3]{AhlgrenAndersen18}, where the case $\kappa=-\frac{1}{2}$ was treated.
We tackle first the case $\kappa \in ]0,2[$ and then the case $\kappa \in ]-2,0[$ with the case
$\kappa=0$ already having been treated by Kuznetsov~\cite[Eq. (5.15)]{Kuznetsov80}.
Thus, let $\kappa \in ]0,2[$.
In the light of Lemma~\ref{lem:appendix-BesselJ-bounds}, the double integral
\begin{equation*}
  \int_0^T t I_{\kappa}(\omega,t)dt
\end{equation*}
converges absolutely and may thus be computed as
\begin{equation*}
  \lim_{\epsilon \to 0^+} G_{\kappa}^{\epsilon}(\omega,T)
  =
  \lim_{\epsilon \to 0^+}\int_0^T t I^{\epsilon}_{\kappa}(\omega,t)dt,
\end{equation*}
where
\begin{align*}
	I^{\epsilon}_{\kappa}(\omega,t)
	=&
  -\frac{2\pi  \omega^{1-\kappa} \sinh(\pi t)}{\sinh(2 \pi t)} \cos(\pi \tfrac{\kappa}{2})
  \int_{\epsilon}^{\omega} \left(J_{2it}(q) + J_{-2it}(q) \right)    q^{\kappa} \frac{dq}{q}
  \\
	&+ \frac{2\pi i \omega^{1-\kappa} \cosh(\pi t)}{\sinh(2 \pi t)} \sin(\pi \tfrac{\kappa}{2})
  \int_{\epsilon}^{\omega} \left(J_{2it}(q)- J_{-2it}(q) \right)   q^{\kappa} \frac{dq}{q}.
\end{align*}
We now make use of the integral representation~\eqref{eq:appendix-BesselJ-intrep} to write
\begin{align*}
	I^{\epsilon}_{\kappa}(\omega,t)
  =&
  -4  \omega^{1-\kappa} \cos(\pi \tfrac{\kappa}{2}) \int_{\epsilon}^{\omega}
  \int_0^{\infty} \sin(q \cosh(\xi)) \cos(2t \xi) d\xi    q^{\kappa} \frac{dq}{q}
  \\
	&+ 4 \omega^{1-\kappa} \sin(\pi \tfrac{\kappa}{2}) \int_{\epsilon}^{\omega}
  \int_0^{\infty} \cos(q \cosh(\xi)) \cos(2t \xi) d\xi   q^{\kappa} \frac{dq}{q}.
\end{align*}
We truncate the inner integral and use Lemma~\ref{lem:appendix-2-derivative-est} to bound the remainder.
This yields
\begin{align}
	\int_0^{\infty} &\cos(q \cosh(\xi)) \cos(2t \xi) d\xi
  \\
	=& \int_0^{A} \cos(q \cosh(\xi)) \cos(2t \xi) d\xi + O(q^{-1/2}e^{-A/2})
  \label{eq:appendix-innermost-truncation}
  \\
	=&\begin{aligned}[t]
    \frac{\sin(2At)}{2t}\cos(q \cosh(A))
	  &+ \frac{q}{2t} \int_0^A \sin(q \cosh(\xi)) \sinh(\xi) \sin(2 t \xi) d \xi
    \\
    &+ O(q^{-1/2}e^{-A/2}),
  \end{aligned}
\end{align}
after further integration by parts. Hence,
\begin{align*}
		\int_0^T t \int_{\epsilon}^{\omega}
    \int_0^{\infty} &\cos(q \cosh(\xi)) \cos(2t \xi) d\xi  q^{\kappa} \frac{dq}{q} dt
    \\
		=& \frac{1-\cos(2AT)}{4A} \int_{\epsilon}^{\omega} \cos(q \cosh(A)) q^{\kappa-1} dq
    \\
		&+ \frac{1}{4} \int_0^A \frac{\sinh(\xi)}{\xi} (1-\cos(2T \xi))
    \int_{\epsilon}^{\omega} \sin(q \cosh(\xi)) q^{\kappa} dq   d\xi
    \\
    &+ O(T\omega^{\kappa+1/2}e^{-A/2}).
\end{align*}
After taking $A \to \infty$, this simplifies to
\begin{align*}
	\int_0^T t  \int_{\epsilon}^{\omega}
  \int_0^{\infty} &\cos(q \cosh(\xi)) \cos(2t \xi) d\xi q^{\kappa-1} dq dt
  \\
	&= \frac{1}{4} \int_0^{\infty} \frac{\sinh(\xi)}{\xi} (1-\cos(2T \xi))
  \int_{\epsilon}^{\omega} \sin(q \cosh(\xi)) q^{\kappa} dq   d\xi.
\end{align*}
Analogously, we find
\begin{align*}
	\int_0^T t  \int_{\epsilon}^{\omega}
  \int_0^{\infty} &\sin(q \cosh(\xi)) \cos(2t \xi) d\xi q^{\kappa-1} dq dt
  \\
	&= - \frac{1}{4} \int_0^{\infty} \frac{\sinh(\xi)}{\xi} (1-\cos(2T \xi))
  \int_{\epsilon}^{\omega} \cos(q \cosh(\xi)) q^{\kappa} dq   d\xi.
\end{align*}
We conclude that
\begin{equation}\label{eq:appendix-G-kappa-eps-simple}
	G_{\kappa}^{\epsilon}(\omega,T)
  =
  \omega^{1-\kappa} \int_0^{\infty} \frac{\sinh(\xi)}{\xi} (1-\cos(2T\xi))
  \int_{\epsilon}^{\omega} \cos(q \cosh(\xi)-\pi \tfrac{\kappa}{2}) q^{\kappa} dq d\xi.
\end{equation}

We now split the integral $\int_{\epsilon}^{\omega}$ as $\int_{0}^{\omega}-\int_{0}^{\epsilon}$ and
consequently write
$G_{\kappa}^{\epsilon}(\omega,T) = H_{\kappa}^{\omega}(\omega,T)-H_{\kappa}^{\epsilon}(\omega,T)$,
where
\begin{equation}\label{eq:H-kappa-eta}
	H_{\kappa}^{\eta}(\omega,T)
  =
  \omega^{1-\kappa} \int_0^{\infty} \frac{\sinh(\xi)}{\xi} (1-\cos(2T\xi))
  \int_{0}^{\eta} \cos(q \cosh(\xi)-\pi \tfrac{\kappa}{2}) q^{\kappa} dq d\xi.
\end{equation}
In due course, we shall see that this man{\oe}uvre is legitimate. 
Integration by parts and a further substitution shows
\begin{equation}\label{eq:appendix-inner-int-exp}
  \begin{split}
	\int_0^{\eta} \cos(q \cosh(\xi)-\pi \tfrac{\kappa}{2}) q^{\kappa} dq
	= \frac{1}{\cosh(\xi)}\eta^{\kappa}\sin(\eta \cosh(\xi)-\pi \tfrac{\kappa}{2})
	\\- \frac{\kappa}{\cosh(\xi)^{1+\kappa}} \int_0^{\eta \cosh(\xi)} \sin(q-\pi\tfrac{\kappa}{2}) q^{\kappa-1} dq.
  \end{split}
\end{equation}
The contribution of the former term to $H^{\eta}_{\kappa}(\omega,T)$ is
\begin{multline*}
	\omega^{1-\kappa}\eta^{\kappa} \int_0^{\infty} \frac{\tanh(\xi)}{\xi}(1-\cos(2T\xi))\sin(\eta \cosh(\xi)-\pi \tfrac{\kappa}{2}) d\xi \\
	\ll \omega^{1-\kappa} \eta^{\kappa} \sum_{\ell=0}^{\infty} \frac{1}{\ell+1} \min\{1, \eta^{-1/2} e^{-\ell/2} \}
	\ll
  \begin{cases}
    \omega^{1-\kappa} \eta^{\kappa-\frac{1}{2}}, & \eta \ge 1,
    \\
    \omega^{1-\kappa}\eta^{\kappa} (1+\lvert \log(\eta) \rvert), & \eta \le 1,
  \end{cases}
\end{multline*}
where we have use the second derivative test, Lemma~\ref{lem:appendix-2-derivative-est}, to each of
the intervals $[\ell,\ell+1]$ individually.
For $\eta=\epsilon$, this vanishes in the limit $\epsilon \to 0^+$ and for $\eta=\omega$, this
contribution is sufficient.
It remains to deal with the secondary term of~\eqref{eq:appendix-inner-int-exp}.
We find trivially that
\begin{equation}\label{eq:appendix-second-triv}
	\left\lvert
    \frac{\kappa}{\cosh(\xi)^{1+\kappa}} \int_0^{\eta \cosh(\xi)} \sin(q-\pi\tfrac{\kappa}{2}) q^{\kappa-1} dq
  \right\rvert \ll \frac{\eta^{\kappa}}{\cosh(\xi)}.
\end{equation}

We now restrict ourselves to the case $1 \le \kappa < 2$. In this case, we can integrate by parts once more to find
\begin{equation}\label{eq:second-kappa-large}
	\left\lvert
    \frac{\kappa}{\cosh(\xi)^{1+\kappa}} \int_0^{\eta \cosh(\xi)}
    \sin(q-\pi\tfrac{\kappa}{2}) q^{\kappa-1} dq
  \right \rvert
  \ll \frac{1}{\cosh(\xi)^{1+\kappa}}+\frac{\eta^{\kappa-1}}{\cosh(\xi)^2}.
\end{equation}
Hence, the contribution to $H^{\eta}_{\kappa}(\omega,T)$ for $1 \le \kappa <2$ is bounded by
\begin{multline*}
	\omega^{1-\kappa}\int_0^{\infty} \frac{\tanh(\xi)}{\xi} \min\left\{\eta^{\kappa},\frac{1}{\cosh(\xi)^{\kappa}}+\frac{\eta^{\kappa-1}}{\cosh(\xi)}\right\} d\xi \\ \ll
  \begin{cases}
    \omega^{1-\kappa}(1+\eta^{\kappa-1}), & \eta \ge 1,
    \\
    \omega^{1-\kappa}\eta^{\kappa}(1+|\log(\eta)|), & \eta\le 1.
  \end{cases}
\end{multline*}
For $\eta = \epsilon$, this vanishes in the limit and for $\eta= \omega$, this is sufficient.

We now turn to the case $0 < \kappa < 1$.
Here, we complete the integral, note that the completed integral vanishes (see~\cite[Eq.
3.712]{GradshteynRyzhik07}), and used the first derivative test,
Lemma~\ref{lem:appendix-2-derivative-est}, to bound the remainder.
\begin{align}
		\int_0^{\eta \cosh(\xi)} &\sin(q-\pi \tfrac{\kappa}{2}) q^{\kappa-1} dq
    \\
    =& \int_0^{\infty} \sin(q-\pi \tfrac{\kappa}{2}) q^{\kappa-1} dq-\int_{\eta \cosh(\xi)}^{\infty} \sin(q-\pi \tfrac{\kappa}{2}) q^{\kappa-1} dq
    \\
		=& 0 + O\left( \eta^{\kappa-1} \cosh(\xi)^{\kappa-1}  \right).
    \label{eq:appendix-second-int-completion}
\end{align}
Hence, the contribution to $H^{\eta}_{\kappa}(\omega,T)$ for $0 < \kappa <1$ is bounded by
\begin{equation*}
  \omega^{1-\kappa}\int_0^{\infty} \frac{\tanh(\xi)}{\xi}
  \min\left\{\eta^{\kappa},\frac{\eta^{\kappa-1}}{\cosh(\xi)}\right\} d\xi
  \ll \begin{cases}
    \omega^{1-\kappa}\eta^{\kappa-1}, & \eta \ge 1,
    \\
    \omega^{1-\kappa}\eta^{\kappa}(1+|\log(\eta)|), & \eta\le 1.
  \end{cases}
\end{equation*}
For $\eta = \epsilon$, this vanishes in the limit and for $\eta= \omega$, this is sufficient.
This concludes the proof for $\kappa \in ]0,2[$.

Let us now assume that $\kappa \in ]-2,0[$.
We argue as before and find that $G_{\kappa}(\omega,T)$ may be computed as the limit
$\displaystyle \lim_{\Omega \to \infty} G_{\kappa}^{\Omega}(\omega,T)
=
\lim_{\Omega \to \infty} \int_0^T t I_{\kappa}^{\Omega}(\omega,t) dt$,
where
\begin{align*}
	I_{\kappa}^{\Omega}(\omega,t)
  =& 4  \omega^{1-\kappa} \cos(\pi \tfrac{\kappa}{2}) \int_{\omega}^{\Omega}
  \int_0^{\infty} \sin(q \cosh(\xi)) \cos(2t \xi) d\xi    q^{\kappa} \frac{dq}{q}
  \\
	&- 4 \omega^{1-\kappa} \sin(\pi \tfrac{\kappa}{2}) \int_{\omega}^{\Omega}
  \int_0^{\infty} \cos(q \cosh(\xi)) \cos(2t \xi) d\xi   q^{\kappa} \frac{dq}{q}.
\end{align*}
Repeating the steps~\eqref{eq:appendix-innermost-truncation}-\eqref{eq:appendix-G-kappa-eps-simple}, we find
\begin{equation}\label{eq:appendix-G-kappa-Omega}
	G_{\kappa}^{\Omega}(\omega,T)
  =
  -\omega^{1-\kappa} \int_0^{\infty} \frac{\sinh(\xi)}{\xi} (1-\cos(2T\xi))
  \int_{\omega}^{\Omega} \cos(q \cosh(\xi)-\pi \tfrac{\kappa}{2}) q^{\kappa} dq d\xi.
\end{equation}
We write this again as a difference $H_{\kappa}^{\Omega}(\omega,T)-H_{\kappa}^{\omega}(\omega,T)$, where
\begin{equation}\label{eq:appendix-H-Omega-def}
	H_{\kappa}^{\eta}(\omega,T)
  =
  \omega^{1-\kappa} \int_0^{\infty} \frac{\sinh(\xi)}{\xi} (1-\cos(2T\xi))
  \int_{\eta}^{\infty} \cos(q \cosh(\xi)-\pi \tfrac{\kappa}{2}) q^{\kappa} dq d\xi.
\end{equation}
By integration by parts and subsequent substitution, we have
\begin{multline}\label{eq:appendix-inner-Omega-int-by-parts}
	\int_{\eta}^{\infty} \cos(q \cosh(\xi)-\pi \tfrac{\kappa}{2}) q^{\kappa} dq d\xi =  -\frac{1}{\cosh(\xi)}\sin(\eta \cosh(\xi)-\pi \tfrac{\kappa}{2})\eta^{\kappa} \\
	- \frac{\kappa}{\cosh(\xi)^{1+\kappa}} \int_{\eta \cosh(\xi)}^{\infty} \sin(q -\pi \tfrac{\kappa}{2}) q^{\kappa-1} dq.
\end{multline}
The first term contributes
\begin{multline*}
	-\omega^{1-\kappa}\eta^{\kappa}\int_0^{\infty} \frac{\tanh(\xi)}{\xi}(1-\cos(2T\xi))\sin(\eta \cosh(\xi)-\pi \tfrac{\kappa}{2}) d\xi \\
	\ll \omega^{1-\kappa}\eta^{\kappa} \sum_{\ell=0}^{\infty} \frac{1}{\ell+1} \min\{1,\eta^{-1/2}e^{-\ell/2}\} \ll
  \begin{cases}
    \omega^{1-\kappa} \eta^{\kappa-\frac{1}{2}}, & \eta \ge 1,
    \\
    \omega^{1-\kappa}\eta^{\kappa} (1+|\log(\eta)|), & \eta \le 1
  \end{cases}
\end{multline*}
to $H_{\kappa}^{\eta}(\omega,T)$, where we have used the second derivative test, Lemma \ref{lem:appendix-2-derivative-est}, on each of the intervals $[\ell,\ell+1]$. For $\eta= \Omega$, this vanishes in the limit $\Omega \to \infty$ and for $\eta= \omega$, the contribution is sufficient.
The secondary term in \eqref{eq:appendix-inner-Omega-int-by-parts} we may bound trivially or using
Lemma \ref{lem:appendix-2-derivative-est}, yielding
$$
\left| - \frac{\kappa}{\cosh(\xi)^{1+\kappa}} \int_{\eta \cosh(\xi)}^{\infty} \sin(q -\pi \tfrac{\kappa}{2}) q^{\kappa-1} dq \right| \ll \min\left\{ \frac{\eta^{\kappa}}{\cosh(\xi)}, \frac{\eta^{\kappa-1}}{\cosh(\xi)^2} \right\}.
$$
The contribution from the secondary term to $H_{\kappa}^{\eta}(\omega,T)$ is thus bounded by
$$
\omega^{1-\kappa}\eta^{\kappa} \int_0^{\infty} \frac{\tanh(\xi)}{\xi} \min\left\{ 1, \frac{1}{\eta \cosh(\xi)} \right\} d\xi \ll \begin{cases} \omega^{1-\kappa}\eta^{\kappa-1}, & \eta \ge 1, \\ \omega^{1-\kappa}\eta^{\kappa}(1+|\log(\eta)|), & \eta\le 1. \end{cases}
$$
For $\eta= \Omega$, this vanishes in the limit $\Omega \to \infty$ and for $\eta = \omega$, this gives a sufficient contribution, thereby completing the proof.

\bibliographystyle{alpha}
\bibliography{compiled_bibliography.bib}

\end{document}